\documentclass[11pt]{article}
\usepackage[a4paper,margin=1in]{geometry}
\usepackage[T1]{fontenc}
\usepackage[utf8]{inputenc}
\usepackage{lmodern}
\usepackage{amsmath,amssymb,amsthm}
\usepackage{float}
\usepackage{enumitem}
\usepackage{hyperref}
\usepackage{microtype}
\usepackage{xcolor}
\usepackage{tikz}
\usepackage{standalone}
\usetikzlibrary{calc}

\definecolor{crossedge}{HTML}{B8C0CC}
\definecolor{circulantedge}{HTML}{718096}
\definecolor{matchingedge}{HTML}{E05A47}

\definecolor{circulantedge}{HTML}{718096}
\definecolor{matchingedge}{HTML}{E05A47}

\hypersetup{colorlinks=false}

\newcommand{\floor}[1]{\left\lfloor #1 \right\rfloor}
\newcommand{\ceil}[1]{\left\lceil #1 \right\rceil}
\newcommand{\cl}{\operatorname{cl}}

\numberwithin{equation}{section}
\newtheorem{theorem}{Theorem}[section]
\newtheorem{problem}[theorem]{Problem}
\newtheorem{lemma}[theorem]{Lemma}

\theoremstyle{plain}

\theoremstyle{definition}
\newtheorem{cons}[theorem]{Construction}
\newtheorem{remark}[theorem]{Remark}
\theoremstyle{plain}
\newtheorem{claim}{Claim}[section]

\title{On Erd\H{o}s Problem 767: Cycles with Chords}
\author{
Xiaozheng Chen\footnote{School of Mathematics and Statistics, Zhengzhou University, Zhengzhou 450000, P.R. China. Email: cxz@zzu.edu.cn. Supported by the National Natural Science Foundation of China (No. 12301457).}
, Bo Ning\footnote{College of Computer Science, Nankai University, Tianjin 300350, P.R. China. Email: bo.ning@nankai.edu.cn. Partially supported by the National Natural Science Foundation of China (No. 12371350) and the Fundamental Research Funds for the Central Universities, Nankai University (No. 63243151).}}
\date{}

\begin{document}
\maketitle

\begin{abstract}
For integers $k\ge 1$ and $n\ge k+2$, let $g_k(n)$ be the maximum number
of edges in an $n$-vertex graph containing no cycle with a vertex incident
with at least $k$ chords. Erd\H{o}s conjectured that
$g_k(n)=(k+1)(n-k-1)$ for $n\ge 2k+2$. Lewin found a counterexample. Bollob\'as later
conjectured that there exists a function $n(k)$ such that $g_k(n)=(k+1)(n-k-1)$ for all $n\ge n(k)$.
Jiang confirmed this by proving the formula for all $n\ge3k+3$
when $k\ge1$.

In this paper, we determine $g_k(n)$ completely.  For all $k\ge1$ and $n\ge k+2$, we prove
$$
g_k(n)=
\max\left\{
\left\lfloor\frac{(k+1)n}{2}\right\rfloor,
\max_{\substack{a\in\mathbb Z\\
    \left\lfloor {(k+1)}/{2}\right\rfloor+1\le a\le k+1}}
\left\{a(n-a)+\left\lfloor\frac{a(k+1-a)}{2}\right\rfloor\right\}
\right\}.
$$
For $k\ge2$, we prove
$g_k(n)=(k+1)(n-k-1)$ when
$n\ge \lceil(5k+1)/2\rceil$, and this threshold is sharp. 
Our proof builds on the method developed by Ma and the second author in \cite{MaNing2020}.
\end{abstract}

\medskip
\noindent\textbf{Keywords.} chorded cycles; chords; extremal graph theory; stability.

\medskip
\noindent\textbf{2020 Mathematics Subject Classification.} 05C35, 05C38, 05C75.

\section{Introduction}
All graphs in this paper are finite and simple. A \emph{chord} of a cycle
$C$ is an edge joining two nonconsecutive vertices of $C$. A cycle with a chord is
called a \emph{chorded cycle}. We study cycles containing a vertex incident with
many chords. 

For $k\ge1$ and $n\ge k+2$, let $g_k(n)$ denote the maximum number of
edges in an $n$-vertex graph containing no cycle with a vertex incident
with at least $k$ chords. 
The case $k=1$ follows from a classical theorem of
P\'osa~\cite[solution prob, 10.2, p.376]{Posa1961}: every graph of order $n\ge4$ with at
least $2n-3$ edges contains a chorded cycle.
The examples $K_3$ and $K_{2,n-2}$ show that both $n\ge 4$ and $2n-3$ are sharp.
For general $k$ and $n\ge 2k+2$, Erd\H{o}s~\cite{Erdos1969} conjectured that
$g_k(n)=(k+1)(n-k-1)$.
This conjecture was listed as Problem \#767 on Bloom's Erd\H{o}s Problems website~\cite{Bloom767}.
Lewin disproved this conjecture; see
\cite[p.~398, Problem~12]{Bollobas1978}. Bollob\'as then posed the following problem.

\begin{problem}[{\normalfont Bollob\'as~\cite[p.~398, Problem~13]{Bollobas1978}}]
\label{prob:Bollobas}
For every integer $k\ge 1$, does there exist an integer $n(k)$ such that
$g_k(n)=(k+1)(n-k-1)
$
for every $n\ge n(k)$?
\end{problem}

Jiang answered Problem~\ref{prob:Bollobas} affirmatively.

\begin{theorem}[Jiang~\cite{Jiang2004}]
\label{thm:Jiang}
Let $k\ge 1$ and $n\ge3k+3$. Then
$g_k(n)=(k+1)(n-k-1)$.
\end{theorem}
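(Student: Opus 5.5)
Both inequalities are needed: a construction for the lower bound and an extremal argument for the upper bound. For the lower bound, take $G=K_{k+1}\vee \overline{K}_{n-k-1}$ with the independent side emptied, i.e.\ the complete bipartite graph $K_{k+1,n-k-1}$. It has exactly $(k+1)(n-k-1)$ edges. Let $C$ be any cycle and $v\in V(C)$. If $v$ lies in the small part, its chords go to vertices of the large part on $C$. $C$ alternates between the parts, so it contains at most $k+1$ vertices of the large part. Two of them are neighbours of $v$ on $C$, so $v$ has at most $k-1$ chords. If $v$ lies in the large part, the same count applies with the small part ($k+1$ vertices) playing the role of the opposite side, again giving at most $k-1$ chords. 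So no vertex is incident with $k$ chords, and $g_k(n)\ge (k+1)(n-k-1)$.

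For the upper bound I would argue by induction on $n$, taking $n=3k+3$ as the base case handled directly. For the step, take a counterexample $G$ with $e(G)>(k+1)(n-k-1)$. If some vertex has degree at most $k+1$, delete it. The result has more than $(k+1)(n-k-2)$ edges. Induction applies when $n-1\ge 3k+3$; at the boundary one instead uses a direct count for $n-1<3k+3$. So we may assume $\delta(G)\ge k+2$.

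Now take a longest path $P=x_0x_1\dots x_m$. Maximality forces all neighbours of $x_0$ to lie on $P$, so $x_0$ has at least $k+2$ neighbours $x_{i_1},\dots,x_{i_{k+2}}$ with $i_1=1$. Consider the cycle $x_0x_1\dots x_{i_{k+2}}x_0$. The vertex $x_0$ is incident with the $k$ chords $x_0x_{i_2},\dots,x_0x_{i_{k+1}}$, which is the forbidden configuration. This contradicts the choice of $G$.

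The main obstacle is the low-degree deletion near the threshold, together with the base case. Excess edges cannot simply be pushed down when $n$ is close to $3k+3$, and there the bound $(k+1)(n-k-1)$ is genuinely false for smaller $n$ (Lewin's counterexample). I would try the following. Assume $G$ is edge-maximal and show that the set of vertices of degree greater than $k+1$ forms a structure in which each vertex of degree at most $k+1$ contributes at most $k+1$ edges. Then count: $e(G)\le (k+1)(n-s)+\binom{s}{2}$, where $s$ is the number of high-degree vertices and $s$ is at most about $2k+2$ by the longest-path argument applied inside $G[\text{high}]$. The condition $n\ge 3k+3$ is exactly what makes this quadratic in $s$ maximized at $s=k+1$.
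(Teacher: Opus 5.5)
Your lower bound via $K_{k+1,n-k-1}$ is correct, and so is your induction step. The step is exactly the paper's proof of Lemma~\ref{lem:terminal}. The case split is not even needed: an endpoint of a longest path always has degree at most $k+1$, so the step works for every $n>3k+3$, and the ``boundary'' case $n-1<3k+3$ you mention never occurs.

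The consequence is that the induction carries no weight. Everything rests on the base case $n=3k+3$, which you leave as ``handled directly'' with no argument. The base also cannot be moved down to a trivially small $n$. The step allows up to $k+1$ new edges per added vertex, so any excess of $g_k(n_0)$ over $(k+1)(n_0-k-1)$ carries over to every larger $n$. As you note, that excess is positive for small $n_0$. So a genuinely non-inductive argument is needed at some $n_0$ where the bound is already tight. In the paper, this statement is recovered as the case $n\ge 3k+3\ge\lceil(5k+1)/2\rceil$ of Lemma~\ref{lem:terminal}. Its base case is Lemma~\ref{lem:base}, and that lemma contains all the work.

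Your sketch for the base case does not work. The bound $e(G)\le(k+1)(n-s)+\binom{s}{2}$ holds trivially, with no structure needed. But it exceeds the target for every $s$:
\[
(k+1)(n-s)+\binom{s}{2}-(k+1)(n-k-1)=\tfrac12\bigl(s^2-(2k+3)s+2(k+1)^2\bigr)>0,
\]
because the discriminant $(2k+3)^2-8(k+1)^2=1-4k-4k^2$ is negative.

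This quadratic is convex, so $s=k+1$ (together with $s=k+2$) is where it is \emph{minimized}, not maximized. Even there it overshoots by $\binom{k+1}{2}$. Moreover $n$ cancels, so the hypothesis $n\ge 3k+3$ plays no role in this count. In $K_{k+1,n-k-1}$ the $k+1$ high-degree vertices span no edges at all. So the missing ingredient is a proof that the high-degree part is sparse, and your unproved claim that $s$ is at most about $2k+2$ does not supply it.

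The paper obtains this sparsity from a longest cycle $C$ in a minimal counterexample, which is $2$-connected and non-Hamiltonian:
\begin{enumerate}[nosep]
\item Lemma~\ref{lem:cycle-degree} and the closure dichotomy of Lemma~\ref{lem:Ma-Ning-cycle-dichotomy} bound $e(G[V(C)])$.
\item Lemma~\ref{lem:exterior-edge-bound} controls the edges off $C$.
\item In the tight case, the Ma--Ning stability theorem (Theorem~\ref{thm:MN-exterior-stability}) forces $G\subseteq W_{n,\lfloor c/2\rfloor,c}$, and the structure of that graph gives $e(G)\le J_k(n)$.
\end{enumerate}
Jiang's original argument for $n\ge 3k+3$ likewise rests on Bondy's theorem on longest cycles. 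Your proposal has no substitute for this step, so as it stands it proves only the lower bound and the reduction to the base case.
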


Jiang's theorem leaves the range $k+2\le n\le3k+2$ open. Our main result
determines $g_k(n)$ completely.

\begin{theorem}
\label{thm:main}
Let $k\ge1$ and $n\ge k+2$. Then
$$
g_k(n)=
\max\left\{
\left\lfloor\frac{(k+1)n}{2}\right\rfloor,
\max_{\substack{a\in\mathbb Z\\
    \left\lfloor {(k+1)}/{2}\right\rfloor+1\le a\le k+1}}
\left\{
a(n-a)+
\left\lfloor\frac{a(k+1-a)}{2}\right\rfloor
\right\}
\right\}.
$$
\end{theorem}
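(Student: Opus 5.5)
The proof has two halves: a lower bound from explicit constructions and a matching upper bound by induction on $n$. The right-hand side suggests the extremal graphs.

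\emph{Lower bound.} The first candidate is a $k$-regular graph (or nearly $k$-regular, when $kn$ is odd) on $n$ vertices. It has $\lfloor kn/2\rfloor$ edges, and no vertex of a cycle can meet $k$ chords, since a vertex on a cycle already spends two of its at most $k$ incident edges on the cycle. The term $\lfloor (k+1)n/2\rfloor$ needs one more edge per vertex. For this I would take a graph of maximum degree $k+1$ in which every vertex of degree $k+1$ has a pendant-like edge (for instance a bridge or an edge in no cycle), so that only $k-1$ of its edges can be chords. A concrete choice is a circulant on two halves joined by a perfect matching, as the paper's colour definitions suggest; I would verify that in this graph every cycle through a vertex uses at most $k-1$ further neighbours as chords. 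The second family is the join of $K_a$-complement-free structure: take a set $A$ of $a$ vertices, let $B$ be the remaining $n-a$ vertices, make $B$ independent, put all edges between $A$ and $B$, and inside $A$ put a graph of maximum degree $k+1-a$. This gives $a(n-a)+\lfloor a(k+1-a)/2\rfloor$ edges. For a cycle $C$ and a vertex $v$: if $v\in B$, then its at most $a\le k+1$ neighbours include two cycle edges, so it has at most $a-2\le k-1$ chords. If $v\in A$, its chords go to $B$-vertices on $C$ or to $A$-neighbours. Because $C$ alternates through $B$ using $A$-vertices, $C$ has at most $a$ vertices of $B$. Counting the $B$-vertices on $C$ together with the $A$-neighbours of $v$ on $C$, minus the two cycle neighbours, I expect a bound of $a+(k+1-a)-2<k$. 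The constraint $a\ge\lfloor(k+1)/2\rfloor+1$ is exactly what makes this count tight, and I would check it carefully.

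\emph{Upper bound.} Let $G$ be an edge-maximal counterexample, with minimum $n$. If some vertex has degree at most $a$ for a suitable threshold, I delete it and apply induction, using that the formula grows by at least the deleted degree. For this I need monotonicity estimates of the form $f(n)-f(n-1)\ge \delta$, which follow from the explicit formula (the increments are $\lceil (k+1)/2\rceil$-ish or $a$). So we may assume large minimum degree $\delta(G)\ge \lfloor (k+1)/2\rfloor+1$. Then I follow the method of Ma--Ning cited in the abstract. Take a longest path, or a cycle maximizing some lexicographic parameter, and use P\'osa-type rotation: the endpoint $v$ of a longest path $P$ has all neighbours on $P$. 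If $v$ has neighbours $x_1<\dots<x_d$ along $P$, the cycle $vPx_d v$ has $v$ incident with $d-2$ chords, so $d\le k+1$. Hence endpoints of longest paths have degree at most $k+1$. I would then iterate rotations to obtain a large set $S$ of endpoints, all of degree at most $k+1$, with neighbourhoods confined to $P$. A structural analysis should force $G$ to look like one of the extremal families: either almost everything has degree at most $k+1$ (giving $\lfloor(k+1)n/2\rfloor$), or there is a set $A$ of high-degree vertices, of size $a$, through which every endpoint neighbourhood passes.

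\emph{Main obstacle.} The hardest step is the middle range $n\le 3k+2$, where Jiang's Theorem~\ref{thm:Jiang} does not apply and the maximum switches between the terms. For each $a$ I need a sharp bound on the edges inside the high-degree set. The plan is to show that each vertex of $A$ has at most $k+1-a$ neighbours in $A$: otherwise I would build a cycle through it that passes through many $B$-vertices and picks up $k$ chords. This uses that vertices in $B$ have at least $a$ neighbours in $A$, so cycles alternating $A$ and $B$ can be routed freely. Handling low-degree vertices that are not endpoints, and the parity and floor terms, is where I expect most of the technical case analysis.
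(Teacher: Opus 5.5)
Your lower bound is essentially right, though muddled in two places. First, any graph of maximum degree at most $k+1$ works with no extra condition: a vertex on a cycle spends two of its at most $k+1$ edges on the cycle, so it meets at most $k-1$ chords. Your ``pendant-like edge'' requirement is unnecessary, and for $k+1$ even it cannot be met, since a regular graph of even degree is bridgeless. Second, in the join construction the restriction $a\ge\lfloor(k+1)/2\rfloor+1$ is not what makes the chord count work; that count is valid for every $a\le k+1$. The restriction is what guarantees that a graph on $a$ vertices with maximum degree $k+1-a$ and $\lfloor a(k+1-a)/2\rfloor$ edges exists.

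\textbf{The genuine gap} is the upper bound for $k+2\le n\le\lceil(5k+1)/2\rceil$, which is the real content of the theorem.
\begin{itemize}
\item Your one concrete tool there is that an endpoint $v$ of a longest path has $d(v)\le k+1$. Deleting $v$ gives $e(G)\le F_k(n-1)+k+1$, which closes the induction only when $F_k(n)-F_k(n-1)\ge k+1$. That is why the paper uses exactly this argument (Lemma~\ref{lem:terminal}) only above $\lceil(5k+1)/2\rceil$. Below that, the increments of $F_k$ are roughly $(k+1)/2$ in the regular regime and roughly the optimal $a\le k$ in the join regime, so the step fails.
\item The rest of your plan (``iterate rotations \ldots\ a structural analysis should force $G$ to look like one of the extremal families'') is a hope rather than an argument. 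Nothing in it produces a set $A$ with $\lvert A\rvert\in I_k$. Nothing shows that the other vertices are independent with many neighbours in $A$, or gives $\lfloor(k+1)n/2\rfloor$ exactly rather than approximately.
\item Your bound of $k+1-a$ on degrees inside $A$ presupposes that structure.
\end{itemize}

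\textbf{The missing ingredient} is a stability theorem relative to the circumference. The paper's route is as follows.
\begin{itemize}
\item A minimal counterexample is $2$-connected; this needs the superadditivity inequalities $F_k(x)+F_k(y)\le F_k(x+y-1)$ of Lemma~\ref{lem:merge-ineq}. It is also non-Hamiltonian with $\delta(G)\ge\lfloor(k+1)/2\rfloor+1$. So by Dirac a longest cycle $C$ satisfies $2\lfloor(k+1)/2\rfloor+2\le c<n$.
\item Every vertex has at most $k+1$ neighbours on $C$.
\item The Ma--Ning dichotomy for the $C$-closure bounds $e(G[V(C)])$.
\item Switching arguments bound the edges off $C$ by $\lfloor c/2\rfloor(n-c)$ or $s(n-c)$.
\item When those edges exceed $(\lfloor c/2\rfloor-1)(n-c)$ and $c\ge10$, the Ma--Ning stability theorem forces $G\subseteq W_{n,\lfloor c/2\rfloor,c}$, with the $\lfloor c/2\rfloor$-set $S$ lying on $C$. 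The exceptional families are excluded using the minimum degree.
\end{itemize}
Thus the extremal parameter is $a=\lfloor c/2\rfloor$, read off from the circumference. Only at that point does a degree count on $S$ (your $A$) finish the proof. The cases $c\le9$, including the exceptional triple $(k,n,c)=(3,8,6)$, need a separate argument. Without this machinery or a genuine substitute, your outline does not reach the middle range.
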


For $k\ge2$, Theorem~\ref{thm:main} implies that
$g_k(n)=(k+1)(n-k-1)$
for 
$n\ge \left\lceil{(5k+1)}/{2}\right\rceil$.
This threshold is sharp.
Indeed, when
$n=\lceil(5k+1)/2\rceil-1$, 
we can construct a graph with $(k+1)(n-k-1)+1$ edges.

The main theorem and its proof are closely related to the work of Ma and Ning on stability on circumference of graphs ~\cite{MaNing2020}. Stability results for Kopylov's theorem for graphs with bounded circumference were studied by F\"{u}redi, Kostochka, and Verstra\"ete~\cite{FKV2016}, and were later completed by F\"{u}redi, Kostochka, Luo, and Verstra\"ete~\cite{FKLV2018}.
Ma and Ning~\cite{MaNing2020} established a stability version of a conjecture proposed by Woodall in 1976~(see \cite{Woodall1976}).
They also discovered that Woodall's conjecture is related to a theorem on long cycles proved by Bondy in 1971~\cite{Bondy1971}.
As a tool for their proof, they obtained a stability version of Bondy's theorem.
Motivated by the idea of Jiang \cite{Jiang2004} using
Bondy’s theorem to prove that Problem~\ref{prob:Bollobas} holds for $n\ge 3k+3$,
in this paper, we use Ma and Ning’s stability result~\cite{MaNing2020} to handle the remaining range of $n$.

The study of chords in cycles is also related to several classical extremal and structural problems on cycles. Thomassen~\cite{Thomassen1989} conjectured that every longest cycle in a $3$-connected graph has a chord. Thomassen~\cite{Thomassen2018} first verified the conjecture for several classes of graphs and recently proved that every $3$-connected graph contains a longest cycle with a chord~\cite{Thomassen2026}. Another related problem concerns nested cycles. Bollob\'{a}s~\cite{2Bollobas1978} introduced this problem. Chen, Erd\H{o}s, and Staton~\cite{Chen1996} proved that, for every
fixed positive integer $k$, there exists a constant $c_k$ such that
every $n$-vertex graph with at least $c_k n$ edges contains $k$
edge-disjoint nested cycles. Gil Fern\'andez, Kim, Kim, and Liu~\cite{FKKL2022} proved that there
exists an absolute constant $C>0$ such that every $n$-vertex graph
with at least $Cn$ edges contains two edge-disjoint nested cycles
without crossings. 
Chakraborti, Janzer, Methuku, and Montgomery~\cite{CJMM2025} proved that, for some absolute constant $t$, $O_k(n(\log n)^t)$ edges are sufficient to guarantee $k$ pairwise edge-disjoint cycles with the same vertex set. For further results on chorded and nested cycles, see~\cite{Balister2018,HST2012,Voss1982}.

The paper is organized as follows.
We begin with a brief proof sketch in Section~\ref{sec:sketch}.
In Section~\ref{sec:constructions},
we present the extremal constructions and establish several estimates for the upper bound. 
In Section~\ref{sec:tools},
we collect 
the closure, switching, and
stability tools.
Finally, we prove 
Theorem~\ref{thm:main} in Section~\ref{sec:proof}.

\section{Notation and proof sketch}\label{sec:sketch}

For $X\subseteq V(G)$, let $G[X]$ denote the subgraph induced by $X$ and let
$G-X:=G[V(G)\setminus X]$. For $v\in V(G)$, let $G-v:=G-\{v\}$.
If $H$ is a subgraph of $G$, let $G-H:=G-V(H)$.
For disjoint sets $X,Y\subseteq V(G)$, let $E_G(X,Y)$ be the set of edges
with one end in $X$ and the other in $Y$, and set
$e_G(X,Y):=\lvert E_G(X,Y)\rvert$. 
For two disjoint subgraphs $H_1$ and $H_2$ of $G$, let
$e_G(H_1,H_2):=e_G(V(H_1),V(H_2))$.
Let $e(G):=\lvert E(G)\rvert$.

For each \(v\in V(G)\), let \(N_G(v)\) denote the neighborhood of \(v\),
set \(d_G(v):=\lvert N_G(v)\rvert\), and let
\(\delta(G):=\min_{v\in V(G)}d_G(v)\). 
For \(X\subseteq V(G)\), let 
$N_G(X):=\{v\in V(G)\setminus X : N_G(v)\cap X\neq\varnothing\}.$
For a subgraph $H$ of $G$ and a cycle $C$ in $G$, set
$N_C(H):=N_G(V(H))\cap V(C).$
For $x\in V(G)$, set
$N_H(x):=N_G(x)\cap V(H).$

The length of a path or cycle is its number of edges.
For a path $P$
and vertices $u,v\in V(P)$,
let $uPv$ be the subpath of $P$ from $u$ to $v$, including both endpoints.
For a subgraph $H$ of $G$,
an
$(x,H,y)$-path is an $x$--$y$ path whose internal vertices lie in
$V(H)$. An $(x,Z,y)$-path is defined similarly for a vertex set $Z$.
Let $\omega(G)$ be the clique number and $c(G)$ for the
circumference of $G$.
When we say that $C$ is oriented, we mean that it is oriented clockwise or counterclockwise.
If $C$ is oriented, then $v^+$ and $v^-$ denote the successor and
predecessor of $v$ on $C$.
If $C$ is oriented and $u,v\in V(C)$, then $uCv$ denotes the directed $u$--$v$ segment of $C$.
A cycle $C$ in $G$ is \emph{locally maximal}
if there is no cycle $C'$ with $\lvert E(C')\rvert>\lvert E(C)\rvert$ that uses at most two edges of
$E_G(V(C),V(G)\setminus V(C))$. Every longest cycle is locally maximal.
A graph is Hamiltonian-connected if every two distinct vertices are joined by a Hamilton path.

For an integer $r$, the $r$-closure of $G$ is obtained by repeatedly
joining two nonadjacent vertices whose degree sum is at least $r$. The
resulting graph is unique~\cite{Bondy1971,Chvatal1972}. Thus $G$ is
$r$-closed if $d_G(x)+d_G(y)<r$ for every pair of nonadjacent vertices
$x,y$. If $C$ is a cycle of length $c$, the \emph{$C$-closure} of $G$,
denoted by $\cl_C(G)$, is obtained by replacing $G[V(C)]$ with its
$(c+1)$-closure and leaving all other adjacencies unchanged.

\medskip
\noindent
{\bf Proof sketch.}
The case $k=1$ follows from P\'osa's theorem, so we may assume that
$k\ge2$. The lower bound follows from the two constructions in
Section~\ref{sec:constructions}: a nearly $(k+1)$-regular construction
and a join-type construction. For the upper bound, we first establish it for
$k+2\le n\le \ceil{(5k+1)/2}$
and then extend the bound to all $n$ by induction.

First, suppose that
$k+2\le n\le \ceil{(5k+1)/2}$.
We argue by contradiction and choose a minimal counterexample $G$.
Then $G$ is $2$-connected and non-Hamiltonian. Let $C$ be a longest
cycle of $G$, and set $c=\lvert E(C)\rvert$.
Applying Lemma~\ref{lem:Ma-Ning-cycle-dichotomy} to $G$ and $C$,
we obtain two cases for the subgraph induced by $V(C)$ in the
$C$-closure of $G$: either deleting $s-1$ vertices of degree at most
$s$ leaves a clique, or there are $\floor{c/2}-1$ vertices of degree
at most $\floor{c/2}$.
These two cases give bounds on
$e(G[V(C)])$ and on the number of edges with at least one endpoint
outside $C$.

Next, we first treat the case $c\le9$ separately, and then suppose that $c\ge10$. If
$e(G-C)+e_G(G-C,C)
   \le \bigl(\floor{c/2}-1\bigr)(n-c)$,
then combining this inequality with the degree bounds on $C$ gives
the bound in Theorem~\ref{thm:main}. Otherwise, the Ma--Ning stability
theorem~\cite{MaNing2020} (see Theorem~\ref{thm:MN-exterior-stability}) implies that $G$ belongs to one of the exceptional families or $G\subseteq W_{n,\floor{c/2},c}$,
where the definition of $W_{n,\floor{c/2},c}$ is postponed to 
Section~\ref{sec:tools}. 
We also show that
if $G$ belongs to one of the exceptional families, then 
$G\subseteq W_{n,\floor{c/2},c}$.
Finally,
the structure of
$W_{n,\floor{c/2},c}$ yields the bound in Theorem~\ref{thm:main}.

For $n\ge \ceil{(5k+1)/2}$, the upper bound follows by induction on $n$,
using an endpoint $v$ of a longest path with $d_G(v)\le k+1$.

\section{Extremal constructions and arithmetic}\label{sec:constructions}

For $\ell\ge2$, an \textit{$\ell$-path-fan} in $G$ consists of a vertex
$x$ and a path $P$ in $G-x$ such that
$\lvert N_G(x)\cap V(P)\rvert\ge\ell$.
We call $x$ the center of the path-fan.

\begin{lemma}\label{lem:path-fan-equivalence}
For $k\ge1$, a graph contains a $(k+2)$-path-fan if and only if it contains a
cycle with a vertex incident with at least $k$ chords.
\end{lemma}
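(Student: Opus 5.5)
We prove the two implications separately, in both directions by explicit translation between the two structures.

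\emph{From a cycle with $k$ chords at a vertex to a $(k+2)$-path-fan.} Let $C$ be a cycle and $x\in V(C)$ incident with at least $k$ chords of $C$. Let $x^-$ and $x^+$ be the two neighbours of $x$ on $C$. Take $P:=C-x$. Since $C$ has length at least $3$ (it has a chord, so length at least $4$), $P$ is a path in $G-x$ with endpoints $x^+$ and $x^-$. The neighbours of $x$ on $P$ include $x^-$ and $x^+$, which are distinct. They also include the other ends of the $k$ chords at $x$. These chord ends are pairwise distinct because $G$ is simple, and each differs from $x^\pm$ since a chord joins nonconsecutive vertices of $C$. Hence $\lvert N_G(x)\cap V(P)\rvert\ge k+2$, so $(x,P)$ is a $(k+2)$-path-fan.

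\emph{From a $(k+2)$-path-fan to a cycle with $k$ chords at a vertex.} Let $(x,P)$ be a $(k+2)$-path-fan. Order the neighbours of $x$ on $P$ along $P$, and let $u$ be the first and $w$ the last of them. We have $u\ne w$ because $k+2\ge 3$. Put $C:=x\,u\,P\,w\,x$, the cycle formed by the edges $xu$ and $xw$ together with the subpath $uPw$. This is a genuine cycle, since $x\notin V(P)$ and $uPw$ has at least two vertices. Every neighbour of $x$ on $P$ lies on $uPw$ by the choice of $u$ and $w$, so at least $k+2$ vertices of $C-x$ are adjacent to $x$. The two $C$-neighbours of $x$ are exactly $u$ and $w$. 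Each of the remaining at least $k$ neighbours $y$ is therefore nonconsecutive with $x$ on $C$, and the edge $xy$ is a chord of $C$ incident with $x$. Hence $C$ has a vertex incident with at least $k$ chords.

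There is no genuine obstacle here; the argument is a direct translation. The only step needing care is the choice in the second direction of $u$ and $w$ as the \emph{extreme} neighbours of $x$ on $P$. This choice guarantees that every fan neighbour lies on the constructed cycle, and it also yields the exact count: two of the $k+2$ neighbours are used as cycle edges and the rest are chords.
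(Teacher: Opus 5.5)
Your proposal is correct and follows essentially the same argument as the paper: take $C-x$ as the path in one direction, and in the other close the minimal subpath $uPw$ spanned by the extreme neighbours of $x$ into the cycle $xuPwx$. Your extra checks (distinctness of chord ends from $x^{\pm}$, and $u\neq w$ since $k+2\ge 3$) just make explicit what the paper leaves implicit.
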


\begin{proof}
First, suppose that a vertex $x$ is incident with at least $k$ chords of
a cycle $C$. Then $x$ has at least $k+2$ neighbors on the path $C-x$.
Hence $x$ and $C-x$ form a $(k+2)$-path-fan.

Conversely, suppose that $x$ and a path $P$ in $G-x$ form a
$(k+2)$-path-fan. Let $uPw$ be the minimal subpath of $P$ containing
$N_G(x)\cap V(P)$. In particular, both $u$ and $w$ are adjacent to $x$. The cycle $xuPwx$ contains all neighbors of $x$ on $uPw$. Thus, $x$ is incident with at
least $k$ chords of this cycle.
\end{proof}

For every integer $k\ge 2$, set
$$
I_k:=\left\{a\in\mathbb Z:
\left\lfloor\frac{k+1}{2}\right\rfloor+1\le a\le k+1\right\}.
$$
For an integer $n\ge 1$, define
$$
R_k(n):=\left\lfloor\frac{(k+1)n}{2}\right\rfloor,
\qquad
J_k(n):=\max_{a\in I_k}a(n-a)+\left\lfloor \frac{a(k+1-a)}{2}\right\rfloor.$$
Finally, set
$$
F_k(n):=
\begin{cases}
\binom n2, & 1\le n\le k+1,\\
\max\{R_k(n),J_k(n)\}, & n\ge k+2.
\end{cases}
$$
\subsection{The lower-bound constructions}
We give two families of graphs containing no $(k+2)$-path-fan: a nearly regular construction with $R_k(n)$ edges and a join-type construction with $a(n-a)+\left\lfloor {a(k+1-a)}/{2}\right\rfloor$ edges for each $a\in I_k$.

\begin{cons}\normalfont
Take the cyclic group $\mathbb{Z}_n$ where $n\ge k+2$.
Define edge set
$M_1=\{\{i,i+j\}: i\in \mathbb{Z}_n,\ 1\le j\le \floor{{(k+1)}/{2}}\}$.
Moreover, if $k+1$ is odd,
$M_2=\{\{i,i+\floor{n/2}\}: 0\le i\le \floor{n/2}-1\}$,
where all additions are taken in $\mathbb{Z}_n$.
Then define the graph $G$ as follows:
\begin{itemize}[nosep]
    \item if $k+1$ is even, let
    $V(G)=\mathbb{Z}_n$, $E(G)=M_1$;
    \item if $k+1$ is odd, let
    $V(G)=\mathbb{Z}_n$, $E(G)=M_1\cup M_2$.
\end{itemize}
If $k+1$ is even, then $G$ is $(k+1)$-regular, and 
$e(G)={(k+1)n}/{2}=R_k(n)$.
If $k+1$ is odd, then $(\mathbb{Z}_n,M_1)$ is $k$-regular. Since $n\ge k+2$, we have
$\floor{n/2}>\floor{(k+1)/2}$.
Then $M_2$ is disjoint from $M_1$ and forms a
matching of size $\floor{n/2}$. Thus the maximum degree of $G$ is at most $k+1$ and
$e(G)={kn}/{2}+\floor{n/2}
=\floor{{(k+1)n}/{2}}=R_k(n)$.
Since the center of a $(k+2)$-path-fan has at least $k+2$ distinct neighbors,
every graph with maximum degree at most $k+1$ contains no $(k+2)$-path-fan; see Figure~\ref{fig:nearly-regular-construction}.
\end{cons}

\begin{figure}[H]
    \centering
\begin{tikzpicture}[
    circulant edge/.style={
        draw=circulantedge,
        line width=0.45pt
    },
    matching edge/.style={
        draw=matchingedge,
        line width=0.9pt
    },
    vertex/.style={
        circle,
        fill=black,
        inner sep=0pt,
        minimum size=2.4pt
    }
]

\newcommand{\drawcirculant}[1]{%
    \foreach \i in {0,...,\nminusone} {
        \pgfmathsetmacro{\ang}{90-360*\i/\n}
        \coordinate (#1v\i) at (\ang:3cm);
    }

    \foreach \i in {0,...,\nminusone} {
        \foreach \j in {1,...,\m} {
            \pgfmathtruncatemacro{\t}{mod(\i+\j,\n)}
            \draw[circulant edge]
                (#1v\i)--(#1v\t);
        }
    }

    \foreach \i in {0,...,\nminusone} {
        \node[vertex] at (#1v\i) {};
    }

    \node at (90:3.22cm)
        {\tiny $i$};

    \pgfmathsetmacro{\plusoneang}{90-360/\n}
    \node at (\plusoneang:3.25cm)
        {\tiny $i+1$};

    \pgfmathsetmacro{\pluslastang}{90-360*\m/\n}
    \pgfmathsetmacro{\pluslabelrotation}{\pluslastang-90}
    \node[
        rotate=\pluslabelrotation
    ]
        at (\pluslastang:3.17cm)
        {\tiny $i+\floor{(k+1)/2}$};

    \pgfmathsetmacro{\minusoneang}{90+360/\n}
    \node at (\minusoneang:3.25cm)
        {\tiny $i-1$};

    \pgfmathsetmacro{\minuslastang}{90+360*\m/\n}
    \pgfmathsetmacro{\minuslabelrotation}{\minuslastang-90}
    \node[
        rotate=\minuslabelrotation
    ]
        at (\minuslastang:3.17cm)
        {\tiny $i-\floor{(k+1)/2}$};
}

\newcommand{\drawmatching}[1]{%
    \foreach \i in {0,...,\matchlast} {
        \pgfmathtruncatemacro{\t}{mod(\i+\half,\n)}
        \draw[matching edge]
            (#1v\i)--(#1v\t);
    }

    \pgfmathsetmacro{\matchang}{90-360*\half/\n}
    \node at (\matchang:3.38cm)
        {\tiny $i+\floor{n/2}$};
}

\begin{scope}[xshift=6cm]

    \def\n{21}
    \def\k{8}

    \pgfmathtruncatemacro{\nminusone}{\n-1}
    \pgfmathtruncatemacro{\m}{floor((\k+1)/2)}
    \pgfmathtruncatemacro{\half}{floor(\n/2)}
    \pgfmathtruncatemacro{\matchlast}{\half-1}

    \drawcirculant{L}
    \drawmatching{L}

    \node at (0,3.82cm)
        {\footnotesize
         $(k+1)\text{ is odd}$};

    \node[anchor=west]
        at (-3cm,-4.25cm) {%
        $\displaystyle
        V(G)=\mathbb{Z}_n,
        \qquad
        E(G)=M_1\cup M_2.$
    };

    \node[anchor=west]
        at (-3cm,-5.10cm) {%
        $\displaystyle
        M_1=
        \bigl\{
            \{i,i+j\}:
            i\in\mathbb{Z}_n,\;
            1\le j\le\floor{(k+1)/2}
        \bigr\}.$
    };

    \node[anchor=west]
        at (-3cm,-5.95cm) {%
        $\displaystyle
        M_2=
        \bigl\{
            \{i,i+\floor{n/2}\}:
            0\le i\le\floor{n/2}-1
        \bigr\}.$
    };

    \draw[
        circulant edge,
        line width=1.1pt
    ]
        (-3cm,-6.80cm)
        --
        (-2.55cm,-6.80cm);

    \node[anchor=west]
        at (-2.40cm,-6.80cm) {%
        \footnotesize
        : the edges in $M_1$
    };

    \draw[
        matching edge,
        line width=1.1pt
    ]
        (0.40cm,-6.80cm)
        --
        (0.85cm,-6.80cm);

    \node[anchor=west]
        at (1.00cm,-6.80cm) {%
        \footnotesize
        : the edges in $M_2$
    };

\end{scope}

\begin{scope}[xshift=-6cm]

    \def\n{21}
    \def\k{7}

    \pgfmathtruncatemacro{\nminusone}{\n-1}
    \pgfmathtruncatemacro{\m}{floor((\k+1)/2)}

    \drawcirculant{R}

    \node at (0,3.82cm)
        {\footnotesize
         $(k+1)\text{ is even}$};

    \node[anchor=west]
        at (-3cm,-4.25cm) {%
        $\displaystyle
        V(G)=\mathbb{Z}_n,
        \qquad
        E(G)=M_1.$
    };

    \node[anchor=west]
        at (-3cm,-5.10cm) {%
        $\displaystyle
        M_1=
        \bigl\{
            \{i,i+j\}:
            i\in\mathbb{Z}_n,\;
            1\le j\le\floor{(k+1)/2}
        \bigr\}.$
    };

    \draw[
        circulant edge,
        line width=1.1pt
    ]
        (-3cm,-5.95cm)
        --
        (-2.55cm,-5.95cm);

    \node[anchor=west]
        at (-2.40cm,-5.95cm) {%
        \footnotesize
        : the edges in $M_1$
    };

\end{scope}


\end{tikzpicture}
    \caption{The nearly regular construction.}
    \label{fig:nearly-regular-construction}
\end{figure}

\begin{cons}\label{cons:split-construction}\normalfont
Let $a\in I_k$. Partition the vertex set into $X$ and $Y$ with
$\lvert X\rvert=a$ and $\lvert Y\rvert=n-a$, where $Y$ is an independent set.
Identify $X$ with the cyclic group $\mathbb{Z}_a$.
Define two edge sets
$M_0=\{\{x,y\}:x\in X,\ y\in Y\}$,
and
$M_1=\{\{i,i+j\}:i\in\mathbb{Z}_a,\ 
1\le j\le \lfloor (k+1-a)/2\rfloor\}$.
Moreover, if $k+1-a$ is odd, define
$M_2=\{\{i,i+\lfloor a/2\rfloor\}:
0\le i\le \lfloor a/2\rfloor-1\}$,
where all additions are taken in $\mathbb{Z}_a$.
Then we define the graph $G$ as follows:
\begin{itemize}[nosep]
\item if $k+1-a$ is even, let
$V(G)=X\cup Y$, $E(G)=M_0\cup M_1$;
\item if $k+1-a$ is odd, let
$V(G)=X\cup Y$, $E(G)=M_0\cup M_1\cup M_2$.
\end{itemize}
The resulting graph $G$ has
$e(G)=a(n-a)+\left\lfloor{a(k+1-a)}/{2}\right\rfloor$
edges and contains no $(k+2)$-path-fan.
Indeed, if the center of a $(k+2)$-path-fan is a vertex
$y\in Y$, then $N_G(y)=X$.
Hence $d_G(y)=a\le k+1$.
If the center is a vertex $x\in X$, let $P$ be any path in
$G-x$. Since $Y$ is independent, $P$ contains at most $a$
vertices of $Y$. Moreover, $x$ has at most $k+1-a$ neighbors
in $X$. Therefore,
$\lvert N_G(x)\cap V(P)\rvert
\le a+(k+1-a)=k+1$.
Thus $G$ contains no $(k+2)$-path-fan; see
Figure~\ref{fig:split-construction}.
\end{cons}
\begin{figure}[H]
    \centering
    \begin{tikzpicture}[
    cross edge/.style={
        draw=crossedge,
        line width=0.32pt,
        opacity=0.24
    },
    circulant edge/.style={
        draw=circulantedge,
        line width=0.55pt
    },
    matching edge/.style={
        draw=matchingedge,
        line width=0.95pt
    },
    vertex/.style={
        circle,
        fill=black,
        inner sep=0pt,
        minimum size=2.5pt
    }
]

\newcommand{\drawsplitbase}[1]{%
    \coordinate (#1center) at (-1.5cm,0);

    \foreach \i in {0,...,\aminusone} {
        \pgfmathsetmacro{\ang}{90-360*\i/\a}
        \coordinate (#1x\i)
            at ($(#1center)+(\ang:1.70cm)$);
    }

    \foreach \q in {0,...,\yminusone} {
        \pgfmathsetmacro{\yy}{1.60-3.20*\q/\yminusone}
        \coordinate (#1y\q)
            at (2.35cm,\yy cm);
    }

    \foreach \i in {0,...,\aminusone} {
        \foreach \q in {0,...,\yminusone} {
            \draw[cross edge]
                (#1x\i)--(#1y\q);
        }
    }

    \foreach \i in {0,...,\aminusone} {
        \foreach \j in {1,...,\m} {
            \pgfmathtruncatemacro{\t}{mod(\i+\j,\a)}
            \draw[circulant edge]
                (#1x\i)--(#1x\t);
        }
    }
}

\newcommand{\drawsplitmatching}[1]{%
    \foreach \i in {0,...,\matchlast} {
        \pgfmathtruncatemacro{\t}{mod(\i+\half,\a)}
        \draw[matching edge]
            (#1x\i)--(#1x\t);
    }
}

\newcommand{\drawsplitnodes}[1]{%
    \foreach \i in {0,...,\aminusone} {
        \node[vertex] at (#1x\i) {};
    }

    \foreach \q in {0,...,\yminusone} {
        \node[vertex] at (#1y\q) {};
    }

    \node at ($(#1center)+(90:1.91cm)$)
        {\tiny $i$};

    \pgfmathsetmacro{\plusoneang}{90-360/\a}
    \node at ($(#1center)+(\plusoneang:1.94cm)$)
        {\tiny $i+1$};

    \pgfmathsetmacro{\pluslastang}{90-360*\m/\a}
    \pgfmathsetmacro{\plusrotation}{\pluslastang-90}
    \node[rotate=\plusrotation]
        at ($(#1center)+(\pluslastang:1.91cm)$)
        {\tiny $i+\left\lfloor r/2\right\rfloor$};

    \pgfmathsetmacro{\minusoneang}{90+360/\a}
    \node at ($(#1center)+(\minusoneang:1.94cm)$)
        {\tiny $i-1$};

    \pgfmathsetmacro{\minuslastang}{90+360*\m/\a}
    \pgfmathsetmacro{\minusrotation}{\minuslastang-90}
    \node[rotate=\minusrotation]
        at ($(#1center)+(\minuslastang:1.91cm)$)
        {\tiny $i-\left\lfloor r/2\right\rfloor$};

    \node at (-1.5cm,2.28cm)
        {\footnotesize
         $X\simeq\mathbb{Z}_a,\ |X|=a$};

    \node at (2.35cm,2.28cm)
        {\footnotesize
         $|Y|=n-a,\ Y\text{ independent}$};
}

\begin{scope}[xshift=-7cm]

    \def\a{13}
    \def\r{8}
    \def\ycount{10}

    \pgfmathtruncatemacro{\aminusone}{\a-1}
    \pgfmathtruncatemacro{\yminusone}{\ycount-1}
    \pgfmathtruncatemacro{\m}{floor(\r/2)}

    \drawsplitbase{L}
    \drawsplitnodes{L}

    \node at (0,3.15cm)
        {\footnotesize
         $r=k+1-a\text{ is even}$};

    \node[anchor=west]
        at (-3.75cm,-2.85cm) {%
        $\displaystyle
        V(G)=X\mathbin{\cup}Y,
        \qquad
        E(G)=M_0\cup M_1.$
    };

    \node[anchor=west]
        at (-3.75cm,-3.65cm) {%
        $\displaystyle
        M_0=
        \bigl\{
            \{x,y\}:
            x\in X,\;
            y\in Y
        \bigr\}.$
    };

    \node[anchor=west]
        at (-3.75cm,-4.45cm) {%
        $\displaystyle
        M_1=
        \bigl\{
            \{i,i+j\}:
            i\in\mathbb{Z}_a,\;
            1\le j\le
            \left\lfloor r/2\right\rfloor
        \bigr\}.$
    };

    \draw[
        cross edge,
        opacity=1,
        line width=1.2pt
    ]
        (-3.75cm,-5.25cm)
        --
        (-3.30cm,-5.25cm);

    \node[anchor=west]
        at (-3.15cm,-5.25cm) {%
        \footnotesize
        : the edges in $M_0$
    };

    \draw[
        circulant edge,
        line width=1.2pt
    ]
        (-0.25cm,-5.25cm)
        --
        (0.20cm,-5.25cm);

    \node[anchor=west]
        at (0.35cm,-5.25cm) {%
        \footnotesize
        : the edges in $M_1$
    };

\end{scope}

\begin{scope}[xshift=7cm]

    \def\a{13}
    \def\r{9}
    \def\ycount{10}

    \pgfmathtruncatemacro{\aminusone}{\a-1}
    \pgfmathtruncatemacro{\yminusone}{\ycount-1}
    \pgfmathtruncatemacro{\m}{floor(\r/2)}
    \pgfmathtruncatemacro{\half}{floor(\a/2)}
    \pgfmathtruncatemacro{\matchlast}{\half-1}

    \drawsplitbase{R}
    \drawsplitmatching{R}
    \drawsplitnodes{R}

    \pgfmathsetmacro{\matchang}{90-360*\half/\a}

    \node
        at ($(Rcenter)+(\matchang:2.05cm)$)
        {\tiny
         $i+\left\lfloor a/2\right\rfloor$};

    \node at (0,3.15cm)
        {\footnotesize
         $r=k+1-a\text{ is odd}$};

    \node[anchor=west]
        at (-3.75cm,-2.85cm) {%
        $\displaystyle
        V(G)=X\mathbin{\cup}Y,
        \qquad
        E(G)=M_0\cup M_1\cup M_2.$
    };

    \node[anchor=west]
        at (-3.75cm,-3.65cm) {%
        $\displaystyle
        M_0=
        \bigl\{
            \{x,y\}:
            x\in X,\;
            y\in Y
        \bigr\}.$
    };

    \node[anchor=west]
        at (-3.75cm,-4.45cm) {%
        $\displaystyle
        M_1=
        \bigl\{
            \{i,i+j\}:
            i\in\mathbb{Z}_a,\;
            1\le j\le
            \left\lfloor r/2\right\rfloor
        \bigr\}.$
    };

    \node[anchor=west]
        at (-3.75cm,-5.25cm) {%
        $\displaystyle
        M_2=
        \bigl\{
            \{i,i+\left\lfloor a/2\right\rfloor\}:
            0\le i\le
            \left\lfloor a/2\right\rfloor-1
        \bigr\}.$
    };

    \draw[
        cross edge,
        opacity=1,
        line width=1.2pt
    ]
        (-3.75cm,-6.05cm)
        --
        (-3.30cm,-6.05cm);

    \node[anchor=west]
        at (-3.15cm,-6.05cm) {%
        \footnotesize
        : the edges in $M_0$
    };

    \draw[
        circulant edge,
        line width=1.2pt
    ]
        (-0.25cm,-6.05cm)
        --
        (0.20cm,-6.05cm);

    \node[anchor=west]
        at (0.35cm,-6.05cm) {%
        \footnotesize
        : the edges in $M_1$
    };

    \draw[
        matching edge,
        line width=1.2pt
    ]
        (-1.55cm,-6.85cm)
        --
        (-1.10cm,-6.85cm);

    \node[anchor=west]
        at (-0.95cm,-6.85cm) {%
        \footnotesize
        : the edges in $M_2$
    };

\end{scope}

\end{tikzpicture}
    \caption{The join-type construction.}
    \label{fig:split-construction}
\end{figure}

The two constructions show $g_k(n)\ge \max\{R_k(n),J_k(n)\}=F_k(n)$ for all $n\ge k+2$. Moreover, Construction~\ref{cons:split-construction} shows that the threshold $n=\ceil{(5k+1)/2}$ is sharp.

\begin{remark}
Let $k\ge2$ and $n=\ceil{(5k+1)/2}-1$.
Construction~\ref{cons:split-construction} with $a=k$ gives a graph containing no
$(k+2)$-path-fan and having $k(n-k)+\floor{k/2}$ edges, where
$k(n-k)+\floor{k/2}-(k+1)(n-k-1)
=1.$
Thus the threshold $n=\ceil{(5k+1)/2}$ is best possible.
\end{remark}

\subsection{Estimates for the upper bound}
We next prove several lemmas. 
These lemmas will be used in the calculations in Section~\ref{sec:proof} and in the induction. Readers who are mainly interested in the proof of Theorem \ref{thm:main} can skip these lemmas and go directly to the main proof in Section~\ref{sec:proof} to understand the overall structure.
They can also 
use a computer program to check the calculations independently.

Recall the functions $R_k,J_k,F_k$ defined in the beginning of Section \ref{sec:constructions}.
The first lemma determines when $J_k(n)$ exceeds $R_k(n)$ and will be used to derive the piecewise formula for $F_k(n)$.

\begin{lemma}\label{lem:linear-bound}
Let $k\ge 2$ and let $N_k\ge k+2$ be the smallest integer such that
$J_k(N_k)>R_k(N_k)$.
Then $J_k(m)>R_k(m)$ for all $m\ge N_k$.
Moreover,
\begin{enumerate}[label=\textup{(\roman*)},nosep]
    \item $\floor{\left(1+\frac{\sqrt{3}}{2}\right)(k+1)}+1
\le N_k\le
\ceil{\left(1+\frac{\sqrt{3}}{2}\right)(k+1)}+1$.
\item if $J_k(n)\le R_k(n)$ for some $n\ge k+2$, then $n\le \bigl\lceil\bigl(1+\frac{\sqrt3}{2}\bigr)(k+1)\bigr\rceil \le 2k+2$.
\end{enumerate}
\end{lemma}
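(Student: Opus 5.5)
The plan is to compare $J_k$ and $R_k$ one summand at a time, reduce the threshold question to a real optimization in two variables, and then control the floors. Put $K:=k+1$ and, for $a\in I_k$, set $f_a(m):=a(m-a)+\floor{a(K-a)/2}-\floor{Km/2}$. Then $J_k(m)-R_k(m)=\max_{a\in I_k}f_a(m)$.

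\emph{Step 1 (monotonicity).} For fixed $a$, moving from $m$ to $m+1$ increases $a(m-a)$ by exactly $a$. It increases $\floor{Km/2}$ by either $\floor{K/2}$ or $\ceil{K/2}$. Every $a\in I_k$ satisfies $a\ge\floor{K/2}+1\ge\ceil{K/2}$, so $f_a(m+1)\ge f_a(m)$. Now let $a^*$ attain $J_k(N_k)$. Then $f_{a^*}(N_k)>0$, and therefore for every $m\ge N_k$ we get $J_k(m)-R_k(m)\ge f_{a^*}(m)\ge f_{a^*}(N_k)>0$.

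\emph{Step 2 (real relaxation).} Drop the floors and put $g(a,m):=am-a^2+a(K-a)/2-Km/2$. Writing $a=K/2+t$ with $t>0$, a direct expansion gives
$$
g(a,m)=t\Bigl(m-K-\frac{K^2}{8t}-\frac{3t}{2}\Bigr).
$$
The bracket is positive exactly when $m>\phi(t):=K+K^2/(8t)+3t/2$. By AM--GM, $\phi(t)\ge K+2\sqrt{3K^2/16}=cK$ with $c:=1+\sqrt3/2$, and equality holds at $t_0=K/(2\sqrt3)$. Note that $K/2+t_0\le K$, so this optimum lies inside the range of $I_k$.

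\emph{Step 3 (lower bound on $N_k$).} For each $a\in I_k$, the floor in $J_k$ only decreases the value, and $\floor{Km/2}\ge Km/2-1/2$. Hence $f_a(m)\le g(a,m)+1/2$. If $m\le\floor{cK}$, then $m\le\phi(t)$ for every $t>0$, so $g(a,m)\le 0$ for every $a$. Thus $f_a(m)\le 1/2$, and since $f_a(m)$ is an integer, $f_a(m)\le 0$. This gives $N_k\ge\floor{cK}+1$.

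\emph{Step 4 (upper bound on $N_k$; the main obstacle).} Here I must exhibit an integer $a\in I_k$ with $f_a(m)\ge1$ at $m=\ceil{cK}+1$. In this direction $f_a(m)\ge g(a,m)-1/2$, so it suffices to find an integer $a$ with $g(a,m)>1/2$. I would take $a$ to be the integer nearest $K/2+t_0$, which means $|t-t_0|\le 1/2$ after accounting for the parity of $K$. Then:
\begin{itemize}[nosep]
\item Since $\phi''(t)=K^2/(4t^3)$, Taylor's theorem gives $\phi(t)-cK\le \tfrac18\cdot K^2/(4\xi^3)$ for some $\xi$ between $t$ and $t_0$, which is $O(1/K)$.
\item Since $m\ge cK+1$, the bracket $m-\phi(t)$ is at least $1-O(1/K)$.
\item Multiplying by $t\approx 0.29K$ gives $g(a,m)>1/2$ once $K$ is moderately large, say $K\ge 6$.
\end{itemize}
The remaining small values $k=2,3,4$ I would check by direct evaluation of $J_k$ and $R_k$, in line with the paper's remark that the calculations can be verified by computer. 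I also need to confirm that this $a$ satisfies $a\ge\floor{K/2}+1$; this holds because $t_0\ge 1$ once $K\ge 4$. This step is where the estimates are tight, and it is the part I expect to be hardest.

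\emph{Step 5 (part (ii)).} If $J_k(n)\le R_k(n)$, then Step 1 forces $n<N_k$, and so $n\le N_k-1\le\ceil{cK}$ by Step 4. Finally, $cK<2K$ and $2K$ is an integer, so $\ceil{cK}\le 2K=2k+2$.
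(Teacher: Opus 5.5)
Your proposal is correct and follows the paper's proof essentially step for step. It uses the same per-$a$ monotonicity argument, the same factorization (the paper writes $x=2a-k-1=2t$, so $\Phi(x)=\phi(x/2)$), the same nearest-integer choice of $a$, and the same $\pm\tfrac12$ floor bookkeeping.

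The only difference is in Step 4. In place of your Taylor estimate with an unquantified threshold, the paper uses the exact identity $\phi(t)-\phi(t_0)=\frac{3(t-t_0)^2}{2t}\le\frac{3}{8t}$. This gives $m-\phi(t)\ge 1-\frac{3}{8t}\ge\frac58$, hence $g(a,m)\ge\frac58 t>\frac12$, for every $k\ge3$, so only $k=2$ has to be checked by hand.
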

\begin{proof}
For $k=2$, we have
$R_2(n)=\lfloor3n/2\rfloor$ and
$J_2(n)=\max\{2n-3,3n-9\}$. Thus, $N_2=7$, and both assertions follow.
We may therefore assume that $k\ge3$.
The integer $N_k$ exists because
$(k+1)(n-k-1)>R_k(n)$ for all sufficiently large $n$.

Let $a\in I_k$ be such that
$J_k(N_k)=a(N_k-a)+\left\lfloor\frac{a(k+1-a)}{2}\right\rfloor$.
For any $m>N_k$,
we have
\begin{align*}
J_{k}(m) &\ge a(m-a)+\left\lfloor\frac{a(k+1-a)}{2}\right\rfloor
          = a(N_k-a)+\left\lfloor\frac{a(k+1-a)}{2}\right\rfloor + a(m-N_k) \\
          &= J_{k}(N_k)+a(m-N_k),\\[4pt]
R_k(m) &=\left\lfloor\frac{(k+1)m}{2}\right\rfloor
        \le \left\lfloor\frac{(k+1)N_k}{2}\right\rfloor + a(m-N_k)
        = R_k(N_k)+a(m-N_k).
\end{align*}
Since $J_k(N_k)>R_k(N_k)$, we obtain
$J_k(m) > R_k(m)$ for all $m\ge N_k$.
We prove the remaining assertion by constructing a function $\Phi(x)$ 
and dividing $n$ into two ranges according to the minimum value of $\Phi(x)$.

(i) Fix $a\in I_k$.
We first compare $a(n-a)+\frac{a(k+1-a)}{2}$ and $\frac{(k+1)n}{2}$.
Let
$x:=2a-k-1$
and define
$\Phi(x):=
\frac{(k+1)^2}{4x}+(k+1)+\frac{3x}{4}.$
Then
\begin{equation}\label{eq:crossover-factorization}
a(n-a)+\frac{a(k+1-a)}{2}-\frac{(k+1)n}{2}
=
\frac{x}{2}\bigl(n-\Phi(x)\bigr).
\end{equation}
The function $\Phi$ attains its minimum at
$x_0=\frac{k+1}{\sqrt3}$,
where
$\Phi(x_0)=\left(1+\frac{\sqrt3}{2}\right)(k+1)$.

\noindent{\bf Case~1:} $n\le\bigl\lfloor\Phi(x_0)\bigr\rfloor$.

For every $a\in I_k$, we have
$n\le\Phi(x_0)\le\Phi(x)$. From~\eqref{eq:crossover-factorization},
$a(n-a)+\left\lfloor\frac{a(k+1-a)}{2}\right\rfloor\le \left\lfloor\frac{(k+1)n}{2}\right\rfloor$ for all $a\in I_k$. Hence
$J_k(n)\le R_k(n)$ when $k+2\le n\le\bigl\lfloor\Phi(x_0)\bigr\rfloor$.

\noindent{\bf Case~2: } $n\ge \bigl\lceil\Phi(x_0)\bigr\rceil+1$.

Set $N=\bigl\lceil\Phi(x_0)\bigr\rceil+1$.
Choose an integer $x$ such that $x\equiv k+1\pmod2$ and $|x-x_0|\le1$.
Such a choice is possible because $k+1\ge4$.
Define $a=\frac{k+1+x}{2}$. Then $a$ is an integer and
$\bigl\lfloor\frac{k+1}{2}\bigr\rfloor+1\le a\le k+1$.
Hence $a\in I_k$.
Since $|x-x_0|<1$ and $x\ge2$, we have
$0\le\Phi(x)-\Phi(x_0)=\frac{3}{4}\frac{(x-x_0)^2}{x}
<\frac{3}{4x}\le\frac{3}{8}$.
Consequently,
$N-\Phi(x)=\bigl(N-\Phi(x_0)\bigr)-\bigl(\Phi(x)-\Phi(x_0)\bigr)
>1-\frac{3}{8}=\frac{5}{8}$.
From~\eqref{eq:crossover-factorization},
we have
$a(N-a)+{\frac{a(k+1-a)}{2}}-\frac{(k+1)N}{2}
= \frac{x}{2}\bigl(N-\Phi(x)\bigr)
> \frac12$.
Then,
$$
J_{k}(N) \ge a(N-a)+\left\lfloor\frac{a(k+1-a)}{2}\right\rfloor
\ge a(N-a)+{\frac{a(k+1-a)}{2}}-\frac12
> \frac{(k+1)N}{2} \ge R_k(N).
$$
Therefore $J_k(N)> R_k(N)$.

Since $J_k(n)\le R_k(n)$ for all $n\le\bigl\lfloor\Phi(x_0)\bigr\rfloor$ by Case~1,
we have $N_k\ge\bigl\lfloor\Phi(x_0)\bigr\rfloor+1$.
Since $J_k(N)>R_k(N)$ with $N=\lceil\Phi(x_0)\rceil+1$,
we have $J_k(m)>R_k(m)$ for every $m\ge N$. Therefore, $N_k\le N=\lceil\Phi(x_0)\rceil+1$.
Combining the two bounds, we obtain
$$
\Bigl\lfloor\Bigl(1+\frac{\sqrt3}{2}\Bigr)(k+1)\Bigr\rfloor+1
\;\le\; N_k \;\le\;
\Bigl\lceil\Bigl(1+\frac{\sqrt3}{2}\Bigr)(k+1)\Bigr\rceil+1.
$$

(ii) If  $n\ge k+2$ and $J_k(n)\le R_k(n)$, then $n<N_k$. Consequently, we have
$n\le N_k-1\le\bigl\lceil\Phi(x_0)\bigr\rceil$.
Since $\Phi(x_0)<2(k+1)$,
$\bigl\lceil\Phi(x_0)\bigr\rceil\le2k+2$.
This completes the proof.
\renewcommand{\qedsymbol}{$\blacksquare$}
\end{proof}

The following lemma shows that $J_k(n)$ is a linear function of $n$ when $n\ge \ceil{{(5k+1)}/{2}}$. 

\begin{lemma}\label{lem:join-term-linear-range}
Let $k\ge 2$. If $n\ge \ceil{{(5k+1)}/{2}}$, then
$J_k(n)=(k+1)(n-k-1)$.
\end{lemma}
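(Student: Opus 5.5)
The plan is to compare each term of the maximum defining $J_k(n)$ directly with the term for $a=k+1$. For $a=k+1$ the floor term vanishes, so that term equals $(k+1)(n-k-1)$; note $k+1\in I_k$ since $\lfloor (k+1)/2\rfloor+1\le k+1$. Hence $J_k(n)\ge (k+1)(n-k-1)$, and it suffices to show that for every other $a\in I_k$,
$$
f(a):=a(n-a)+\left\lfloor\frac{a(k+1-a)}{2}\right\rfloor\le (k+1)(n-k-1)
$$
whenever $n\ge\lceil(5k+1)/2\rceil$.

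Write $a=k+1-t$ with $t\ge 1$; the constraint $a\in I_k$ only gives an upper bound on $t$, which we will not need. First drop the floor, $f(a)\le a(n-a)+a t/2$, and expand. The $(k+1)(n-k-1)$ terms cancel, and the expression factors in $t$:
$$
(k+1)(n-k-1)-(k+1-t)(n-k-1+t)-\frac{(k+1-t)t}{2}
= t\left(n-\frac{5k+5}{2}+\frac{3t}{2}\right).
$$
For $t\ge 2$ the bracket is at least $n-(5k-1)/2$, which is positive once $n\ge\lceil(5k+1)/2\rceil$. For $t=1$ the bracket is $n-(5k+2)/2$. When $k$ is even, $\lceil(5k+1)/2\rceil=(5k+2)/2$, so this is nonnegative.

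The only delicate step is $t=1$ with $k$ odd, where dropping the floor loses exactly the $1/2$ we need. There I will compute exactly: $\lfloor k/2\rfloor=(k-1)/2$, and
$$
(k+1)(n-k-1)-k(n-k)-\frac{k-1}{2}=n-\frac{5k+1}{2}\ge 0,
$$
since $n\ge (5k+1)/2$ for odd $k$. Combining the cases gives $f(a)\le f(k+1)$ for all $a\in I_k$, so $J_k(n)=(k+1)(n-k-1)$.
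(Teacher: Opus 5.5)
Your proposal is correct and follows essentially the same route as the paper. You compare each $a=k+1-t\in I_k$ with $a=k+1$, drop the floor to get the same bound $t\bigl(n-\frac{5k+5}{2}+\frac{3t}{2}\bigr)$ for $t\ge 2$, and handle $t=1$ exactly. The only difference is cosmetic: the paper treats $t=1$ uniformly via $n-2k-1-\lfloor k/2\rfloor\ge 0$, whereas you split by the parity of $k$.
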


\begin{proof}
It suffices to show that
$a(n-a)+\floor{\frac{a(k+1-a)}{2}}\le (k+1)(n-k-1)$ for every $a\in I_k$.
For $a=k+1$, equality holds. Let
$a=k+1-d$, where $d\in\mathbb Z$ and $1\le d<\frac{k+1}{2}$.
Then we have 
\begin{equation}\label{eq:d}
    \begin{aligned}
& (k+1)(n-k-1)
 -\left((k+1-d)(n-k-1+d)
      +\floor{\frac{d(k+1-d)}{2}}\right)  \\
=& d(n-2k-2+d)-\floor{\frac{d(k+1-d)}{2}} .
\end{aligned}
\end{equation}
For $d=1$, 
$d(n-2k-2+d)-\floor{\frac{d(k+1-d)}{2}}=n-2k-1-\floor{k/2}\ge 0$.
For $d\ge 2$, 
$d(n-2k-2+d)-\floor{\frac{d(k+1-d)}{2}}
\ge \frac{d}{2}(2n-5k+1)\ge 0$.
Thus, 
$J_k(n)=(k+1)(n-k-1)$ for $n\ge \ceil{(5k+1)/2}$.
\renewcommand{\qedsymbol}{$\blacksquare$}
\end{proof}

Next, we give a piecewise formula for $F_k(n)$.
\begin{lemma}\label{lem:B-equals-f}
Let $k\ge 2$ and $n\ge k+2$. Then
$$F_k(n)=
\begin{cases}
\displaystyle \left\lfloor \frac{(k+1)n}{2}\right\rfloor,
& \displaystyle k+2\le n\le 
\left\lfloor (1+\frac{\sqrt{3}}{2})(k+1)\right\rfloor,\\
\displaystyle \left\lfloor \frac{(2n+k+1)^2}{24}\right\rfloor,
& \displaystyle 
\left\lfloor (1+\frac{\sqrt{3}}{2})(k+1)\right\rfloor
< n<
\ceil{\frac{5k+1}{2}},\\
\displaystyle (k+1)(n-k-1),
& \displaystyle n\ge
\ceil{\frac{5k+1}{2}}.
\end{cases}
$$
\end{lemma}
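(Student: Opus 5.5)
The plan is to treat the three ranges separately, using Lemmas~\ref{lem:linear-bound} and~\ref{lem:join-term-linear-range} for the outer ranges and an explicit maximization of $J_k(n)$ for the middle one. Since $n\ge k+2$, we always have $F_k(n)=\max\{R_k(n),J_k(n)\}$.

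\emph{First range.} For $k+2\le n\le\lfloor(1+\frac{\sqrt3}{2})(k+1)\rfloor=\lfloor\Phi(x_0)\rfloor$, Case~1 in the proof of Lemma~\ref{lem:linear-bound} already gives $J_k(n)\le R_k(n)$. Hence $F_k(n)=R_k(n)$.

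\emph{Third range.} For $n\ge\lceil(5k+1)/2\rceil$, Lemma~\ref{lem:join-term-linear-range} gives $J_k(n)=(k+1)(n-k-1)$, so it remains to show $J_k(n)\ge R_k(n)$.
\begin{itemize}[nosep]
\item For $k\ge3$, check that $\lceil(5k+1)/2\rceil>\lceil\Phi(x_0)\rceil$. This holds because $\tfrac{5k+1}{2}-(1+\tfrac{\sqrt3}{2})(k+1)=(\tfrac32-\tfrac{\sqrt3}{2})k-\tfrac12-\tfrac{\sqrt3}{2}$ exceeds $1$ once $k\ge5$; the cases $k=3,4$ are verified numerically. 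Lemma~\ref{lem:linear-bound}(ii) then forces $J_k(n)>R_k(n)$.
\item For $k=2$, use the explicit formulas $R_2(n)=\lfloor 3n/2\rfloor$ and $J_2(n)=\max\{2n-3,3n-9\}$ from that proof. At $n=6$ these give $J_2=R_2=9=(k+1)(n-k-1)$; for $n\ge7$ we have $J_2>R_2$.
\end{itemize}

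\emph{Middle range.} Set $s:=2n+k+1$. Completing the square gives
$$a(n-a)+\frac{a(k+1-a)}{2}=\frac{a(s-3a)}{2}=\frac{s^2}{24}-\frac32\Bigl(a-\frac s6\Bigr)^2 .$$
I would proceed in three steps.
\begin{enumerate}[nosep]
\item \emph{Admissibility.} Show that the real optimizer $s/6$ lies far enough inside the window defining $I_k$. Indeed, $s/6\le k+1$ exactly when $n\le\frac{5(k+1)}{2}$, which holds here. Also $s/6\ge\lfloor\frac{k+1}{2}\rfloor+1$ once $n$ exceeds roughly $k+4$, which holds since $n>\lfloor\Phi(x_0)\rfloor$. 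So the nearest integer(s) to $s/6$ belong to $I_k$, and the unconstrained integer maximum is attained inside $I_k$.
\item \emph{Exact integer maximum.} Show that $\max_{a\in\mathbb Z}\lfloor a(s-3a)/2\rfloor=\lfloor s^2/24\rfloor$, by writing $s=6q+r$ with $0\le r\le5$ (together with the parity of $s$, i.e.\ a residue modulo $12$ where needed). In each case take $a=q$ or $q+1$ and compare $\frac{a(s-3a)}{2}$ with $\frac{s^2}{24}$: the loss $\frac32(a-s/6)^2$ lies in $\{0,\frac1{24},\frac16,\frac38\}$. One then checks that the floor of the polynomial value equals $\lfloor s^2/24\rfloor$ in each residue class. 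This is routine but fiddly.
\item \emph{$J$ dominates $R$.} Show $J_k(n)\ge R_k(n)$ in this range. For $n\ge\lceil\Phi(x_0)\rceil+1$ this follows from Lemma~\ref{lem:linear-bound}. The delicate case is the single value $n=\lceil\Phi(x_0)\rceil=\lfloor\Phi(x_0)\rfloor+1$ (when $\Phi(x_0)\notin\mathbb Z$, which always holds since $\sqrt3$ is irrational). There Lemma~\ref{lem:linear-bound} only says $N_k\le n+1$.
\end{enumerate}

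\emph{Main obstacle.} I expect the boundary value $n=\lceil\Phi(x_0)\rceil$ in step~3 to be the main difficulty. I would first try the factorization~\eqref{eq:crossover-factorization} with $x=2a-k-1$ for the optimal admissible $a$. Then $n-\Phi(x)>0$ provided $\Phi(x)-\Phi(x_0)<n-\Phi(x_0)$, and the left side is $\frac34(x-x_0)^2/x$, which is small since $x\approx x_0\approx (k+1)/\sqrt3$. This yields $J_k(n)\ge \frac{(k+1)n}{2}-\frac12\ge R_k(n)-\frac12$, hence $J_k(n)\ge R_k(n)$ by integrality, possibly after a parity refinement of the floors. If this margin fails for some small $k$, I would verify those finitely many cases directly from the formulas.
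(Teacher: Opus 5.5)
You handle the first range and Steps~1--2 of the middle range exactly as the paper does. The gap is in how you propose to handle Step~3 at the boundary value $n=\lceil\Phi(x_0)\rceil$, and that approach cannot work.

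You want $n-\Phi(x)>0$ for the optimal admissible $x=2a-k-1$, on the grounds that $\Phi(x)-\Phi(x_0)=\frac34(x-x_0)^2/x$ is small. But $n-\Phi(x_0)=1-\{(1+\frac{\sqrt3}{2})(k+1)\}$ can be just as small, and the slack can be exactly zero. Take $(k,n)=(7,15)$:
\begin{itemize}[nosep]
\item $\Phi(x_0)=8+4\sqrt3\approx14.93$, so $n=\lceil\Phi(x_0)\rceil$;
\item the admissible values are $x\in\{2,4,6,8\}$, with $\Phi(2)=17.5$, $\Phi(4)=15$, $\Phi(6)=15+\tfrac16$, $\Phi(8)=16$;
\item hence $n-\Phi(x)\le 0$ for every $a\in I_7$, while $J_7(15)=R_7(15)=60$.
\end{itemize}
This is not confined to finitely many small $k$. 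The same exact coincidence occurs whenever $4(n-k-1)^2-3(k+1)^2\in\{1,4\}$. That is a Pell-type condition with infinitely many solutions, for example $(k,n)=(14,28)$, $(29,56)$, $(208,390)$. So neither the size estimate nor the fallback of checking finitely many cases closes the boundary case.

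The missing observation is that Step~3 follows at once from your own Step~2; the ``parity refinement'' you allude to is exactly that step. Once $J_k(n)=\lfloor(2n+k+1)^2/24\rfloor$ is known, note that
\[
\frac{(2n+k+1)^2}{24}-\frac{(k+1)n}{2}=\frac{4n^2-8(k+1)n+(k+1)^2}{24},
\]
which is positive exactly when $n>(1+\frac{\sqrt3}{2})(k+1)$. Monotonicity of the floor then gives $J_k(n)\ge\lfloor(k+1)n/2\rfloor=R_k(n)$ for every $n$ in the middle range, the boundary value included. No appeal to Lemma~\ref{lem:linear-bound} is needed; this is how the paper argues.

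There is also a small slip in your third range. For $k=3$ we have $\lceil(5k+1)/2\rceil=8=\lceil\Phi(x_0)\rceil$, so the numerical check you claim for $k=3$ fails. In fact $J_3(8)=R_3(8)=16$, so Lemma~\ref{lem:linear-bound}(ii) cannot give a strict inequality there. Only $J_k(n)\ge R_k(n)$ is needed. The paper gets it for all $k\ge2$ at once: $n\ge\lceil(5k+1)/2\rceil\ge 2k+2$ gives $(k+1)(n-k-1)\ge(k+1)n/2$. This also makes your separate treatment of $k=2$ unnecessary.
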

\begin{proof}

First suppose that
$k+2\le n\le \floor{(1+\frac{\sqrt{3}}{2})(k+1)}$.
By Lemma~\ref{lem:linear-bound}, the smallest integer $N_k\ge k+2$ for which $J_k(N_k)>R_k(N_k)$ satisfies $N_k\ge \lfloor(1+\frac{\sqrt3}{2})(k+1)\rfloor+1$.
Hence
$J_k(n)\le R_k(n)$.
Therefore
$
F_k(n)=R_k(n).
$

Next suppose that
$\floor{(1+\frac{\sqrt{3}}{2})(k+1)}<n<\ceil{(5k+1)/2}$.
For $k\in\{2,3\}$, this interval contains no integer.
Then we may assume
that $k\ge4$.
Let $a_0\in I_k$ be such that
$J_k(n)=a_0(n-a_0)+\left\lfloor\frac{a_0(k+1-a_0)}{2}\right\rfloor$.
Then
$$J_k(n)\le a_0(n-a_0)+\frac{a_0(k+1-a_0)}{2}
      =\frac{(2n+k+1)^2}{24}-\frac32\Bigl(a_0-\frac{2n+k+1}{6}\Bigr)^2
      \le\frac{(2n+k+1)^2}{24}.$$
Consequently
$J_k(n)\le \Bigl\lfloor\frac{(2n+k+1)^2}{24}\Bigr\rfloor. $

Since $\bigl\lfloor\bigl(1+\frac{\sqrt3}{2}\bigr)(k+1)\bigr\rfloor < n < \bigl\lceil\frac{(5k+1)}{2}\bigr\rceil$,
every integer closest to $\frac{(2n+k+1)}{6}$ is in $I_k$.
Choose an integer $a\in I_k$ nearest to $\frac{2n+k+1}{6}$, and set
$\epsilon:=|a-\frac{2n+k+1}{6}|$.
Then
$\epsilon\in\left\{0,\frac16,\frac13,\frac12\right\}$ and
$$
J_k(n)\ge a(n-a)+\Bigl\lfloor\frac{a(k+1-a)}{2}\Bigr\rfloor
       =\Bigl\lfloor\frac{(2n+k+1)^2}{24}-\frac32{\epsilon}^2\Bigr\rfloor. 
$$

In the following, we 
divide the proof into four cases according to the value of $\epsilon$.
If $\epsilon=0$, then
$\frac{3}{2}\epsilon^2=0$.
If $\epsilon=\frac{1}{6}$, then
$\frac{3}{2}\epsilon^2=\frac{1}{24}$ and the fractional part of
$\frac{(2n+k+1)^2}{24}$ is $\frac{1}{24}$.
If $\epsilon=\frac{1}{3}$, then
$\frac{3}{2}\epsilon^2=\frac{1}{6}$ and the fractional part of
$\frac{(2n+k+1)^2}{24}$ is $\frac{1}{6}$ or $\frac{2}{3}$.
If $\epsilon=\frac{1}{2}$, then
$\frac{3}{2}\epsilon^2=\frac{3}{8}$ and the fractional part of
$\frac{(2n+k+1)^2}{24}$ is $\frac{3}{8}$.

Therefore,
$\Bigl\lfloor\frac{(2n+k+1)^2}{24}-\frac32 {\epsilon}^2\Bigr\rfloor
   =\Bigl\lfloor\frac{(2n+k+1)^2}{24}\Bigr\rfloor$.
Thus
$J_k(n)=\left\lfloor\frac{(2n+k+1)^2}{24}\right\rfloor$.
Moreover, $n>\bigl(1+\frac{\sqrt3}{2}\bigr)(k+1)$, and hence
$\frac{(2n+k+1)^2}{24}>\frac{(k+1)n}{2}$.
It follows that $J_k(n)\ge R_k(n)$.
Hence $F_k(n)=J_k(n)$.

Finally, suppose that
$n\ge \ceil{(5k+1)/2}$.
By Lemma~\ref{lem:join-term-linear-range}, we have $J_k(n)=(k+1)(n-k-1)$. Since $n\ge \left\lceil(5k+1)/2\right\rceil$, we have
$n\ge2k+2$ for $k\ge2$. Consequently $J_k(n)\ge R_k(n)$, and hence
$F_k(n)=J_k(n)=(k+1)(n-k-1)$.
\renewcommand{\qedsymbol}{$\blacksquare$}
\end{proof}

To bound the number of edges of $G$, we decompose $G$ into edge-disjoint subgraphs.
The following inequalities combine the bounds for edge-disjoint subgraphs of $G$.

\begin{lemma}\label{lem:merge-ineq}
Let $k\ge2$. If $k+2\le n\le \lceil(5k+1)/2\rceil$ and
$J_k(n)>R_k(n)$, then the following inequalities hold.
\begin{enumerate}[label=\textup{(\roman*)},nosep]
    \item For all positive integers $x$ and $y$ satisfying $x+y-1\le n$, $F_k(x)+F_k(y)\le F_k(x+y-1)$;
    \item For all positive integers $x$ and $y$ satisfying  $x+y\le n$, $F_k(x)+F_k(y)\le F_k(x+y)$.
\end{enumerate}
\end{lemma}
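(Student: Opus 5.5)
Since (ii) follows from (i) together with monotonicity of $F_k$ (indeed $F_k(x+y-1)\le F_k(x+y)$, because $\binom m2$, $R_k$ and $J_k$ are all nondecreasing in $m$ and $F_k(k+1)=\binom{k+1}2\le F_k(k+2)$), the plan is to prove (i) and then deduce (ii). For (i), assume without loss of generality $x\le y$, and set $m=x+y-1\le n$. The small cases are immediate. If $x=1$ then $F_k(1)=0$ and the claim is $F_k(y)\le F_k(y)$. If $x=2$ then $F_k(2)=1$, so we need $F_k(y)+1\le F_k(y+1)$, which holds by strict monotonicity: $R_k$ increases by at least $\lfloor (k+1)/2\rfloor\ge1$ per step and $J_k$ increases by at least $\min I_k\ge 2$.

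The key idea for $x\ge3$ is a ``superadditivity via marginal slopes'' argument. We write
\[
F_k(m)-F_k(y)=\sum_{t=y}^{m-1}\bigl(F_k(t+1)-F_k(t)\bigr),
\qquad
F_k(x)=\sum_{t=1}^{x-1}\bigl(F_k(t+1)-F_k(t)\bigr),
\]
and both sums have $x-1$ terms. It then suffices to compare increments: show that the $i$-th increment at $t=y+i-1$ is at least the $i$-th increment at $t=i$, i.e.\ that the discrete derivative $\Delta F_k(t)=F_k(t+1)-F_k(t)$ is, in an averaged sense, nondecreasing in $t$ on $[1,n]$. For $t\le k$, we have $\Delta F_k(t)=t$. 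At $t=k+1$, the increment is $F_k(k+2)-\binom{k+1}2$. For $t\ge k+2$, $F_k$ is the max of $R_k$ (slope $(k+1)/2$ on average) and $J_k$, which is a maximum of linear functions in $t$ with slopes $a\in I_k$, hence convex with slopes at least $\lfloor (k+1)/2\rfloor+1$. The hypothesis $J_k(n)>R_k(n)$ ensures $n\ge N_k$. By Lemma~\ref{lem:linear-bound}, on $[k+2,N_k-1]$ we have $F_k=R_k$, and on $[N_k,n]$ we have $F_k=J_k$, which is convex. The switch from $R_k$ to $J_k$ only increases slopes, since $J_k$ overtakes $R_k$ there.

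Rather than pointwise increments (the $R_k$ part alternates parity, and the jump at $t=k+1$ is awkward), I would compare directly using the formulas. Since $x\le y$ and $x+y-1\le n\le\lceil(5k+1)/2\rceil$, we get $x\le (n+1)/2\le 1.25k+1.5$, so $x$ is at most slightly above $k+1$. This gives two cases.

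\textbf{Case $x\le k+1$.} Here $F_k(x)=\binom x2$. We need $F_k(y+x-1)-F_k(y)\ge\binom x2$. If $y\ge N_k$, pick the maximizer $a$ of $J_k(y)$ to get $J_k(y+x-1)\ge J_k(y)+a(x-1)$, and $a\ge \lfloor (k+1)/2\rfloor+1$. If additionally $x\le 2a$, this beats $\binom x2=x(x-1)/2$. When $x$ is large relative to $a$, I would use $J$ evaluated with $a=k+1$, namely $(k+1)(t-k-1)$, whose slope $k+1\ge x$. If $y<N_k$, so that $F_k(y)=R_k(y)$ or $\binom y2$, then I would bound $F_k(m)$ below by $J_k(m)$ or $R_k(m)$ as needed, and verify the inequality through the quadratic expression \eqref{eq:crossover-factorization} and the explicit ranges of Lemma~\ref{lem:B-equals-f}.

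\textbf{Case $x\ge k+2$.} Then $y\ge k+2$ and $m\ge 2k+3$, so both $F_k(x)$ and $F_k(y)$ lie in the $R_k$/middle regime while $m$ is near the top of the range. Here I would use the crude bounds $F_k(x)\le (2x+k+1)^2/24$ or $\le (k+1)x/2$, and lower-bound $F_k(m)$ by $J_k(m)$ at a well-chosen $a$.

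I expect the main obstacle to be the floors and the transition zone around $N_k$ and $k+1$, where the increments of $R_k$ (alternately $\lfloor (k+1)/2\rfloor$ and $\lceil (k+1)/2\rceil$) are smaller than those of $\binom t2$ just below $k+1$. This is precisely why the hypothesis $J_k(n)>R_k(n)$ is needed: without it, the inequality $\binom{k+1}2+F_k(y)\le F_k(y+k)$ fails when everything stays in the $R_k$ regime. For these floor-level boundary cases I would fall back on explicit case checking in $k \bmod 2$, and for small $k$ (say $k\le 5$) on direct computation, as the paper suggests.
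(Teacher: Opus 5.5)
Once you drop the increment comparison, your plan is the paper's own case analysis. You split according to whether $x$ and $y$ exceed $k+1$ and according to which of $\binom{y}{2}$, $R_k(y)$, $J_k(y)$ equals $F_k(y)$. You carry the maximizer $a$ of $J_k(y)$ over to $J_k(x+y-1)$, and you use $a=k+1$ when both arguments are large. Dropping the increment idea was necessary, not just convenient. Indeed $F_k(k+2)-F_k(k+1)=k+1$, while $F_k(k+3)-F_k(k+2)\le\lceil (k+1)/2\rceil$, so the increments of $F_k$ are not monotone.

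However, as written the proposal does not close the argument. The branch $x\le k+1$, $y<N_k$ is left to ``verification through the quadratic expression'' that compares the join term with $R_k$, and that identity is irrelevant here. What is needed is elementary:
\begin{itemize}
\item If $y\ge k+2$, then $F_k(y)=R_k(y)$. Since $x\le k+1$ we have $\binom{x}{2}\le (k+1)(x-1)/2$, and $\binom{x}{2}$ is an integer, so $\binom{x}{2}+R_k(y)\le R_k(x+y-1)$.
\item If $y\le k+1$ and $m:=x+y-1\le k+1$, use $\binom{m}{2}-\binom{x}{2}-\binom{y}{2}=(x-1)(y-1)\ge 0$.
\item If $y\le k+1$ and $m\ge k+2$, convexity of $\binom{\cdot}{2}$ under $x,y\le k+1$ gives $\binom{x}{2}+\binom{y}{2}\le\binom{k+1}{2}+\binom{m-k}{2}$. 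Then $(k+1)m-2\binom{k+1}{2}-2\binom{m-k}{2}=(m-k)(2k+2-m)\ge0$ bounds the sum by $R_k(m)\le F_k(m)$.
\end{itemize}

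Two smaller points. In the branch $y\ge N_k$, your fallback to $a=k+1$ would not work, since it does not control $J_k(y)$, which is attained at another $a$. It is never needed, because $2a\ge 2\lfloor (k+1)/2\rfloor+2\ge k+2>x$. For $x\ge k+2$, the constraint $x+y-1\le n$ forces $y\le\lceil(5k+1)/2\rceil-k-1<N_k$, so there is no ``middle regime''. Both values are $R_k$, and $R_k(x)+R_k(y)\le (k+1)(x+y)/2\le (k+1)(x+y-k-2)\le J_k(x+y-1)$ since $x+y\ge 2k+4$.

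Your explanation of why $J_k(n)>R_k(n)$ is needed is wrong. Since $\binom{k+1}{2}$ is an integer, $R_k(y+k)-R_k(y)=\binom{k+1}{2}$ exactly, so $\binom{k+1}{2}+R_k(y)\le R_k(y+k)$ holds with equality rather than failing. Apart from discarding $k=2$, the paper's proof of (i) never uses that hypothesis. No parity split in $k$ or small-$k$ computation is required. Deferring unspecified ``floor-level boundary cases'' to such checks leaves the proof incomplete rather than finishing it.
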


\begin{proof}
When $k=2$, the hypothesis $J_k(n)>R_k(n)$ does not hold for
$4\le n\le6$. Thus we may assume that $k\ge3$.

(i) For $m\ge k+2$, by definition of $F_k$, $F_k(m)=\max\{R_k(m),J_k(m)\}\ge J_k(m)$.
Hence for $k+2\le m\le n$, we have either $F_k(m)=J_k(m)$ or $F_k(m)>J_k(m)$.

\begin{claim}\label{claim:F_k(n)-merge}
For every integer $m$ with $k+2\le m\le n$ and every $a\in I_k$,
if $F_k(m)>J_k(m)$, then $F_k(m)\le a(m-1)$.
\end{claim}
\begin{proof}
Since $F_k(m)>J_k(m)$, we have $F_k(m)=R_k(m)$.
Since $m\ge k+2$, we have 
$\frac{(k+1)m}{2}
\le
\frac{k+2}{2}(m-1)
\le
\left(\left\lfloor\frac{k+1}{2}\right\rfloor+1\right)(m-1)
\le a(m-1)$.
Hence
$F_k(m)=R_k(m)=\left\lfloor\frac{(k+1)m}{2}\right\rfloor\le a(m-1)$.
\end{proof}

\noindent\textbf{Case 1.} $1\le x,y\le k+1$.

If $x+y-1\le k+1$, then
$F_k(x)+F_k(y)=
\binom{x}{2}+\binom{y}{2}\le \binom{x+y-1}{2}=F_k(x+y-1).
$
If $x+y-1\ge k+2$, by symmetry we may assume $x\ge y$. 
If $\min\{x,y\}=1$, then the assertion is immediate. We may therefore assume that $2\le x,y\le k+1$.
Since $x+y$ is fixed, 
$\binom{x}{2}+\binom{y}{2}$
is maximized at $x=k+1$ and $y=x+y-k-1$. Hence
$\binom{x}{2}+\binom{y}{2}
\le
\binom{k+1}{2}+\binom{x+y-k-1}{2}$.
Moreover,
$\frac{(k+1)(x+y-1)}{2}
-\binom{k+1}{2}
-\binom{x+y-k-1}{2} =
\frac{(x+y-k-1)(2k+3-x-y)}{2}\ge 0$.
Therefore
$\binom{k+1}{2}+\binom{x+y-k-1}{2}
\le R_k(x+y-1)$.
Consequently,
$F_k(x)+F_k(y)=\binom{x}{2}+\binom{y}{2}\le R_k(x+y-1)\le F_k(x+y-1)$.

\medskip
\noindent\textbf{Case 2.} $1\le x\le k+1$ and $y\ge k+2$.

Then $x+y-1\ge k+2$.
If $F_k(y)=J_k(y)$, let $b\in I_k$ be the integer such that $J_k(y)=b(y-b)+\left\lfloor\frac{b(k+1-b)}{2}\right\rfloor$.
Thus,
$$\begin{aligned}
F_k(x)+F_k(y)&=\binom{x}{2}+J_k(y)
=\binom{x}{2}+b(y-b)+\left\lfloor\frac{b(k+1-b)}{2}\right\rfloor\\
&\le b(x-1)+b(y-b)+\left\lfloor\frac{b(k+1-b)}{2}\right\rfloor
=b(x+y-1-b)+\left\lfloor\frac{b(k+1-b)}{2}\right\rfloor\\
&\le J_k(x+y-1)\le F_k(x+y-1).
\end{aligned}
$$

If $F_k(y)=R_k(y)$, then
$$\begin{aligned}
    F_k(x)+F_k(y)&\le
\binom{x}{2}+\left\lfloor\frac{(k+1)y}{2}\right\rfloor
\le \left\lfloor\frac{(k+1)(x+y-1)}{2}\right\rfloor\\
&=R_k(x+y-1)\le F_k(x+y-1).
\end{aligned}
$$

\medskip
\noindent\textbf{Case 3.} $x,y\ge k+2$.

First assume that exactly one of $F_k(x)=J_k(x)$ and $F_k(y)=J_k(y)$ holds.
By symmetry, assume
that
$F_k(x)>J_k(x)$ and $F_k(y)=J_k(y)$.
Then there is an integer $b\in I_k$ such that
$F_k(y)=J_k(y)=b(y-b)+\left\lfloor\frac{b(k+1-b)}{2}\right\rfloor$.
By Claim~\ref{claim:F_k(n)-merge} we have
$F_k(x)\le b(x-1)$.
Hence
$$
F_k(x)+F_k(y)\le b(x-1)+b(y-b)+\left\lfloor\frac{b(k+1-b)}{2}\right\rfloor
\le J_k(x+y-1)\le F_k(x+y-1).
$$

Next suppose that $F_k(x)=J_k(x)$ and $F_k(y)=J_k(y)$. 
Let $a,b\in I_k$ be such that $F_k(x)=J_k(x)=a(x-a)+\left\lfloor\frac{a(k+1-a)}{2}\right\rfloor$
and
$F_k(y)=J_k(y)=b(y-b)+\left\lfloor\frac{b(k+1-b)}{2}\right\rfloor$.
We may assume that $a\le b$.
Since $J_k(x+y-1)\ge b(x+y-1-b)+\left\lfloor\frac{b(k+1-b)}{2}\right\rfloor$,
we have $J_k(x+y-1)-J_k(y)\ge b(x-1)$.
Because $a\le b$ and $x\ge 1$, we obtain
$$
\begin{aligned}
F_k(x)+F_k(y)-J_k(x+y-1)
&\le a(x-a)+\frac{a(k+1-a)}{2}-b(x-1)\\
&=(a-b)(x-1)+\frac{a(k+3-3a)}{2}\le0.
\end{aligned}
$$
Hence
$F_k(x)+F_k(y)\le J_k(x+y-1)\le F_k(x+y-1)$.

Finally, assume that $F_k(x)>J_k(x)$ and $F_k(y)>J_k(y)$.
Since $x+y\ge 2k+4$, we have
$$
\begin{aligned}
F_k(x)+F_k(y)
&=R_k(x)+R_k(y)\le R_k(x+y)\le \frac{(k+1)(x+y)}{2}\\
&\le (k+1)(x+y-k-2)
\le J_k(x+y-1)\le F_k(x+y-1).
\end{aligned}
$$

(ii) We first show that $F_k(n)$ is strictly increasing.
For $1\le n\le k$, $F_k(n+1)-F_k(n)=n\ge1$. For $n=k+1$, $F_k(k+2)\ge R_k(k+2)>\binom{k+1}{2}=F_k(k+1)$. For $n\ge k+2$, $R_k(n+1)\ge R_k(n)+1$ and $J_k(n+1)\ge J_k(n)+1$, so $F_k(n+1)=\max\{R_k(n+1),J_k(n+1)\}>\max\{R_k(n),J_k(n)\}=F_k(n)$. Hence $F_k(n+1)>F_k(n)$ for all $n\ge1$.

Since $x+y\le n$, part~(i) gives
$F_k(x)+F_k(y)\le F_k(x+y-1)\le F_k(x+y)$.
\renewcommand{\qedsymbol}{$\blacksquare$}
\end{proof}

The next two lemmas provide the edge bounds used in the proof of Lemma~\ref{lem:base}.

\begin{lemma}\label{lem:cycle-arithmetic-consequences}
Let $k\ge 2$ and
$k+2\le n\le \ceil{{(5k+1)}/{2}}$.
Let $c<n$ be an integer. Then the following statements hold:
\begin{enumerate}[label=\textup{(\roman*)},nosep]
\item\label{lem:cycle-arithmetic-i}
For every integer $s$ with $2\le s\le \floor{\frac{c}{2}}-1$, we have
$s(n-c)+\floor{\frac{(s-1)s+(c-s+1)(k+1)}{2}}\le F_k(n)$.

\item\label{lem:cycle-arithmetic-ii}
If $\floor{c/2}\ge1$, then
$\left(\floor{\frac{c}{2}}-1\right)(n-c)+\floor{\frac{\left(\floor{\frac{c}{2}}-1\right)\min\left\{\floor{\frac{c}{2}},k+1\right\}+\left(c-\floor{\frac{c}{2}}+1\right)(k+1)}{2}}\le F_k(n)$.
\end{enumerate}
\end{lemma}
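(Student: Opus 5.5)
The plan is a direct arithmetic comparison. In each case I bound the left-hand side by one of the two quantities making up $F_k(n)$, namely $R_k(n)$ or a single term $a(n-a)+\lfloor a(k+1-a)/2\rfloor$ with a well-chosen $a\in I_k$. Throughout, write $h:=\floor{c/2}$ and $p:=n-c\ge1$. I will use the hypotheses $c\le n-1$ and $n\le\ceil{(5k+1)/2}$, and for $k=2$ I will check the few admissible pairs $(n,c)$ by hand. The key algebraic identity is
$$\frac{(k+1)n}{2}=\frac{(k+1)p}{2}+\frac{(k+1)c}{2},$$
so every left-hand side can be rewritten as a multiple of $p$ plus $\frac{(k+1)c}{2}$ minus a correction term.

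For (i), let $f(s):=s\,p+\frac{s(s-1)+(c-s+1)(k+1)}{2}$ be the unfloored quantity. It is a convex quadratic in $s$, and taking the floor is monotone, so the maximum over $2\le s\le h-1$ is attained at $s=2$ or at $s=h-1$. I handle the two endpoints separately.
\begin{itemize}[nosep]
\item At $s=2$, a direct computation gives $\frac{(k+1)n}{2}-f(2)=p\bigl(\frac{k+1}{2}-2\bigr)+\frac{k-1}{2}$, which is nonnegative for $k\ge3$. Since $R_k(n)$ is itself a floor of $\frac{(k+1)n}{2}$, this yields $\lfloor f(2)\rfloor\le R_k(n)\le F_k(n)$. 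For $k=2$ the range of $s$ forces $c\ge6$, which is incompatible with $c<n\le6$, so this case is vacuous.
\item At a general $s$, the same identity gives $\frac{(k+1)n}{2}-f(s)=p\bigl(\frac{k+1}{2}-s\bigr)+\frac{(s-1)(k+1-s)}{2}$.
\begin{itemize}[nosep]
\item When $s\le\frac{k+1}{2}$, both summands are nonnegative, so $R_k(n)$ suffices.
\item When $s>\frac{k+1}{2}$ and $s\le k+1$, I compare instead with the term for $a=s\in I_k$. The difference is $s(c-s)+\frac{s(k+1-s)}{2}-\frac{s(s-1)+(c-s+1)(k+1)}{2}$, which I would simplify using $c\ge 2s+2$.
\item When $s>k+1$, which can only happen for large $k$ since $c<\frac{5k+2}{2}$, I compare with $a=k+1$, i.e.\ with $(k+1)(n-k-1)$. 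Here I use $p\ge1$ and the upper bound on $n$.
\end{itemize}
\end{itemize}

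For (ii), put $m:=\min\{h,k+1\}$. The identity above shows that $\frac{(k+1)n}{2}$ minus the unfloored left-hand side equals
$$p\Bigl(\frac{k+1}{2}-h+1\Bigr)+\frac{(h-1)(k+1-m)}{2}.$$
If $h-1\le\frac{k+1}{2}$, this is nonnegative and $R_k(n)$ suffices. Otherwise $h-1\ge\floor{(k+1)/2}+1$, and I split according to the size of $h$.
\begin{itemize}[nosep]
\item If $h-1\le k+1$, I take $a=h-1\in I_k$ or $a=h$, whichever lies in $I_k$, and compare with $a(n-a)+\lfloor a(k+1-a)/2\rfloor$. This reduces to an inequality in $h,c,k$, using $c\in\{2h,2h+1\}$.
\item If $h\ge k+2$, then $m=k+1$ and the expression is $(h-1)p+\frac{(k+1)c}{2}$. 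I compare it with $(k+1)(n-k-1)=(k+1)p+(k+1)(c-k-1)$. This needs $(h-k-2)p\le\frac{(k+1)(c-2k-2)}{2}$, which I verify using $p\le\ceil{(5k+1)/2}-c$ and $c\ge2k+4$.
\end{itemize}

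The main obstacle will be the intermediate regime $\frac{k+1}{2}<s\le k+1$ in both parts. There the correct comparison term depends on $a$ being close to $s$, and the resulting inequality is a two-variable quadratic inequality whose nonnegativity relies delicately on $c\ge2s+2$, on $p\ge1$, and on $n\le\ceil{(5k+1)/2}$. The floors (parity of $c$, $k$ and $a(k+1-a)$) must also be tracked so that no $\frac12$ is lost. I would first try the single choice $a=s$, respectively $a=h-1$, and verify nonnegativity at the extreme values of $p$, since the difference is linear in $p$. Only if that fails near the boundary $n=\ceil{(5k+1)/2}$ would I fall back on $a=k+1$ for those residual cases.
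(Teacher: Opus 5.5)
Your work outside the intermediate regime is sound. The identity $\frac{(k+1)n}{2}-f(s)=p\left(\frac{k+1}{2}-s\right)+\frac{(s-1)(k+1-s)}{2}$ holds, the endpoint reduction by convexity is valid, and so are the cases $s\le\frac{k+1}{2}$ and $h-1\le\frac{k+1}{2}$ via $R_k(n)$. The comparisons with $a=k+1$ when $s>k+1$ or $h\ge k+2$ also go through.

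The gap is in the regime you flagged, and the route you sketch through it cannot work. There you compare only with single join terms $a(n-a)+\lfloor a(k+1-a)/2\rfloor$: first $a=s$ (resp.\ $a\in\{h-1,h\}$), then $a=k+1$. But in this regime the inequality is sometimes true \emph{only} because of $R_k(n)$.
\begin{itemize}
\item For (i), $(k,n,c,s)=(12,17,16,7)$ satisfies all hypotheses with $s=h-1>\frac{k+1}{2}$. The left-hand side is $7+\lfloor 172/2\rfloor=93$, whereas $J_{12}(17)=92$ (attained at $a=8$). So no join term can bound it, while $R_{12}(17)=110$ does.
\item For (ii), $(k,n,c)=(10,15,14)$ has $h-1=6>\frac{11}{2}$. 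The left-hand side is $6+65=71>70=J_{10}(15)$, while $R_{10}(15)=82$.
\end{itemize}
Your expectation that $a=s$ fails only near $n=\lceil(5k+1)/2\rceil$ is also mistaken. Since $s(n-s)-s(n-c)=s(c-s)$, the $a=s$ term minus the left-hand side of (i) does not depend on $n$ at all. Before floors it equals $(c-2s)\left(s-\frac{k+1}{2}\right)-\frac{k+1-s}{2}$, which for $c=2s+2$ is negative whenever $\frac{k+1}{2}<s<\frac{3(k+1)}{5}$. For example, $(k,s,c)=(8,5,12)$ gives $5n-15$ against $5n-14$ for every $n$. The fallback $9(n-9)$ wins only for $n\ge17$, leaving $13\le n\le16$ uncovered.

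The missing idea has two parts.
\begin{itemize}
\item For $s>\frac{k+1}{2}$, the negative term $p\left(\frac{k+1}{2}-s\right)$ can still be absorbed by $\frac{(s-1)(k+1-s)}{2}$ when $p=n-c$ is small. So $R_k(n)$ must remain in play.
\item When $R_k(n)$ does not suffice, the join term must use an $a$ tied to $n$ (near $\frac{2n+k+1}{6}$), not to $s$.
\end{itemize}
The paper handles this by setting $r=s+1$ in (i) and $r=h$ in (ii), and splitting on whether $J_k(n)\le R_k(n)$.
\begin{itemize}
\item If $J_k(n)\le R_k(n)$ and $2r-k-3>0$, then $r\in I_k$. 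Hence $r(n-r)+\lfloor r(k+1-r)/2\rfloor\le R_k(n)$, which forces $n\le\frac{2r^2-r(k+1-r)+1}{2r-k-1}$. This upper bound on $n$ is exactly what makes the comparison with $R_k(n)$ succeed.
\item If $J_k(n)>R_k(n)$, then $n>(1+\frac{\sqrt3}{2})(k+1)$. The paper reduces to $c=2r$ and bounds the resulting concave quadratic in $r$ by its maximum. It then compares this with $J_k(n)\ge\frac{(2n+k+1)^2}{24}-\frac78$, treating $k\in\{4,5\}$ by hand.
\end{itemize}
You need some version of this two-sided argument to close the intermediate regime.
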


\begin{proof}
We first consider $k=2$. In this case $4\le n\le6$. Since $c<n$, we have
$\floor{c/2}-1\le1$.
Thus there is no integer $s$ in the stated range in
\textup{(i)}.

For \textup{(ii)}, since $k=2$, we have $4\le n\le 6$. We divide the proof into three cases.

Suppose that $n=4$.
We have $c\in\{2,3\}$. The left-hand side takes the values $3$ and $4$, respectively. Since $F_2(4)=6$, the inequality in (ii) holds.

Suppose that $n=5$.
We have $c\in\{2,3,4\}$. The left-hand side takes the values $3$, $4$, and $6$, respectively. Since $F_2(5)=7$, the inequality in (ii) holds.

Suppose that $n=6$.
We have $c\in\{2,3,4,5\}$. The left-hand side takes the values $3$, $4$, $7$, and $8$, respectively. Since $F_2(6)=9$, the inequality in (ii) holds.

Therefore, both \textup{(i)} and \textup{(ii)} hold when $k=2$.

Assume that $k\ge3$. 
Let $\varepsilon\in\{0,1\}$ and let $r$ be an integer satisfying
$1+\varepsilon\le r\le\lfloor c/2\rfloor$. Set
$$L_{\varepsilon}(c,r)=(r-1)(n-c)+\floor{\frac{(r-1-\varepsilon)(r-\varepsilon)+(c-r+1+\varepsilon)(k+1)}{2}}.$$
We now claim that
$L_{\varepsilon}(c,r)\le F_k(n)$.

We first compare $L_{\varepsilon}(c,r)$ with $\frac{(k+1)n}{2}$ using the following inequality:
\begin{equation}\label{eq:L-versus-regular}
2\left(L_{\varepsilon}(c,r)-\frac{(k+1)n}{2}\right)\le (n-c)(2r-k-3)-(r-1-\varepsilon)(k+1-r+\varepsilon).
\end{equation}
If the right-hand side of \eqref{eq:L-versus-regular} is nonpositive,
then since $L_{\varepsilon}(c,r)$ is an integer, we have
$L_{\varepsilon}(c,r)\le R_k(n)$.
We distinguish two cases.

\medskip
\noindent\textbf{Case 1.}
$J_k(n)\le R_k(n)$.

Then $F_k(n)=R_k(n)$. By Lemma~\ref{lem:linear-bound}, we have $n\le2k+2$. Since $2r\le c<n$, it follows that $r\le k$.

First, suppose that $2r-k-3\le0$. 
Since $n-c>0$, $r-1-\varepsilon\ge0$, and $k+1-r+\varepsilon\ge0$, both terms on the right-hand side of \eqref{eq:L-versus-regular} are nonpositive. 
Consequently, $L_{\varepsilon}(c,r)\le R_k(n)=F_k(n)$.

Next, we assume that $2r-k-3>0$. Then $r\ge\floor{\frac{k+1}{2}}+1$, and hence $r\in I_k$. Therefore
$$
\begin{aligned}
r(n-r)+\floor{\frac{r(k+1-r)}{2}} \le J_k(n)\le R_k(n)\le\frac{(k+1)n}{2}.
\end{aligned}
$$
Since $\floor{\frac{r(k+1-r)}{2}}\ge\frac{r(k+1-r)}{2}-\frac12$, 
we obtain $r(n-r)+{\frac{r(k+1-r)}{2}}-\frac{1}{2} \le\frac{(k+1)n}{2}$.
Thus, $(2r-k-1)n\le2r^2-r(k+1-r)+1$.
Hence
\begin{equation}\label{eq:n-upper-bound}
n\le\frac{2r^2-r(k+1-r)+1}{2r-k-1}.
\end{equation}

We claim that for $\varepsilon\in\{0,1\}$, $(r-1-\varepsilon)(k+1-r+\varepsilon)>(2r-k-3)(n-2r)$. 
Indeed, from \eqref{eq:n-upper-bound} and $2r-k-3>0$, 
we have
$(2r-k-3)(n-2r)\le \frac{(2r-k-3)\big(r(k+1-r)+1\big)}{2r-k-1}.$
Thus, for $\varepsilon=0$,
\begin{equation}\label{eq:varepsilon-zero-positive}
(r-1)(k+1-r)-(2r-k-3)(n-2r)
\ge \frac{r(k-r)+(k+1-r)^2+(k+1-r)+2}{2r-k-1},
\end{equation}
and for $\varepsilon=1$,
\begin{equation}\label{eq:varepsilon-one-positive}
(r-2)(k+2-r)-(2r-k-3)(n-2r)
\ge \frac{r(2r-k-4)+2(k+1-r)^2+3(k+1-r)+2}{2r-k-1}.
\end{equation}

Since $2r-k-3>0$, the common denominator $2r-k-1$ in
\eqref{eq:varepsilon-zero-positive} and \eqref{eq:varepsilon-one-positive} is positive.
Since $r\le k$,
the numerator in \eqref{eq:varepsilon-zero-positive} satisfies
$$
r(k-r)+(k+1-r)^2+(k+1-r)+2=(k+3)(k-r)+4\ge4.
$$
Since $k\ge 3$,
the numerator in \eqref{eq:varepsilon-one-positive} satisfies
$$
\begin{aligned}
&r(2r-k-4)+2(k+1-r)^2+3(k+1-r)+2\\
&\qquad=4r^2-(5k+11)r+2k^2+7k+7\\
&\qquad=4\left(r-\frac{5k+11}{8}\right)^2
+\frac{(k-1)(7k+9)}{16}>0.
\end{aligned}
$$
Hence, the right-hand sides of \eqref{eq:varepsilon-zero-positive} and \eqref{eq:varepsilon-one-positive} are positive.
Then, for $\varepsilon\in\{0,1\}$, $(r-1-\varepsilon)(k+1-r+\varepsilon)>(2r-k-3)(n-2r)$. 

Since $c\ge2r$, we have $n-c\le n-2r$. It follows that $(2r-k-3)(n-c)\le(r-1-\varepsilon)(k+1-r+\varepsilon)$. The right-hand side of \eqref{eq:L-versus-regular} is  nonpositive.
Hence $L_{\varepsilon}(c,r)\le R_k(n)=F_k(n)$.

\medskip
\noindent\textbf{Case 2.}
$J_k(n)>R_k(n)$.

If $k=3$, then $5\le n\le8$ and $R_3(n)=2n$, $J_3(n)=\max\{3n-8,4n-16\}\le2n$. Thus, this case cannot occur when $k=3$.
Therefore, we may assume that $k\ge4$.
Then $F_k(n)=J_k(n)$, and
Lemma~\ref{lem:linear-bound} gives
$n>\left(1+\frac{\sqrt3}{2}\right)(k+1)$.
If $2r-k-3\le0$, then $r\le k$, and the right-hand side of
\eqref{eq:L-versus-regular} is nonpositive. Hence
$L_{\varepsilon}(c,r)\le R_k(n)<J_k(n)=F_k(n)$.
We may therefore assume that $2r-k-3>0$. In the following we prove that
$L_{\varepsilon}(c,r)\le J_k(n)$.

We first reduce the problem to the case $c=2r$. 
Assume $c>2r$.
Since decreasing $c$ by one decreases the numerator inside the floor of $L_\varepsilon(c,r)$ by
$k+1$, we have
$L_\varepsilon(c-1,r)-L_\varepsilon(c,r)
\ge
(r-1)-\left\lceil\frac{k+1}{2}\right\rceil$.
Since $2r-k-3>0$, we have
$r-1\ge\ceil{\frac{k+1}{2}}$.
Then $L_{\varepsilon}(c-1,r)\ge L_{\varepsilon}(c,r)$. 

We can repeat this argument as $c$ decreases to $2r$.  
Then $L_\varepsilon(c,r)\le L_\varepsilon(2r,r)$.
By dropping the floor, we have
\begin{equation}\label{eq:L}
    L_\varepsilon(c,r)\le L_\varepsilon(2r,r)
\le
(r-1)(n-2r)
+
\frac{
(r-1-\varepsilon)(r-\varepsilon)
+(r+1+\varepsilon)(k+1)
}{2}.
\end{equation}

We next find a lower bound for $J_k(n)$. 
Note that
$$a(n-a)+\frac{a(k+1-a)}{2}
=
\frac{(2n+k+1)^2}{24}
-\frac{3}{2}\left(a-\frac{2n+k+1}{6}\right)^2.$$
Since $\bigl\lfloor\bigl(1+\frac{\sqrt3}{2}\bigr)(k+1)\bigr\rfloor \le n \le \bigl\lceil\frac{(5k+1)}{2}\bigr\rceil$,
every integer closest to $\frac{(2n+k+1)}{6}$ is in $I_k$.
Choose an integer $a\in I_k$ nearest to $\frac{2n+k+1}{6}$.
Then $\lvert a-\frac{2n+k+1}{6} \rvert\le \frac12$.
It implies that $a(n-a)+\frac{a(k+1-a)}{2}\ge \frac{(2n+k+1)^2}{24}
-\frac{3}{8}$.
Since $a(n-a)+\frac{a(k+1-a)}{2}$ is an integer or a half-integer,
we have $J_k(n)\ge\floor{a(n-a)+\frac{a(k+1-a)}{2}}\ge a(n-a)+\frac{a(k+1-a)}{2}-\frac12$.
Therefore,
\begin{equation}\label{eq:join-term-lower-bound}
J_k(n)
\ge
\frac{(2n+k+1)^2}{24}
-\frac78.
\end{equation}

First, suppose that $\varepsilon=0$. By \eqref{eq:L} and \eqref{eq:join-term-lower-bound}, $J_k(n)-L_0(c,r)\ge\frac n2-\frac{3(k+1)}4-\frac54$. If $k\ge6$, then
$J_k(n)-L_0(c,r) > \frac{\sqrt3-1}{4}(k+1)-\frac54\ge\frac{7\sqrt3-12}{4}>0$.
Thus, $L_0(c,r)<J_k(n)$ for $k\ge6$.
Assume that $k\in\{4,5\}$. Since $2r-k-3>0$ and $2r\le c<n\le\ceil{\frac{5k+1}{2}}$, we have $r\in\{k,k+1\}$.

If $r=k$, 
then $n\ge10$ when $k=4$ and $n\ge12$ when $k=5$.
We have $L_0(2k,k)=(k-1)n-k^2+2k+\floor{\frac{k+1}{2}}$. On the other hand, by the definition of $J_k(n)$, taking $a=k$ gives $J_k(n)\ge kn-k^2+\floor{\frac{k}{2}}$. Hence $J_k(n)-L_0(2k,k)\ge n-2k+\floor{\frac{k}{2}}-\floor{\frac{k+1}{2}}\ge1$. 

If $r=k+1$, then $L_0(2k+2,k+1)=kn-k^2+1\le kn-k^2+\floor{\frac{k}{2}}\le J_k(n)$. Therefore,  $L_0(c,r)\le J_k(n)$.

Now suppose that $\varepsilon=1$. By \eqref{eq:L} and \eqref{eq:join-term-lower-bound}, $J_k(n)-L_1(c,r)\ge\frac{5n}{6}-\frac{13(k+1)}{12}-\frac{23}{12}$. If $k\ge4$,
$J_k(n)-L_1(c,r,1) > \frac{5\sqrt3-3}{12}(k+1)-\frac{23}{12}\ge\frac{25\sqrt3-38}{12}>0.$
Thus $L_1(c,r)<J_k(n)$.

\medskip
Combining Cases~1 and~2, we obtain $L_{\varepsilon}(c,r)\le F_k(n)$ for every $k\ge3$ and all possible $r$ and $\varepsilon$.

For (i), take $\varepsilon=1$ and $r=s+1$. Since $2\le s\le\floor{c/2}-1$,  $s(n-c)+\floor{\frac{(s-1)s+(c-s+1)(k+1)}{2}}\le F_k(n)$. This proves \textup{(i)}.

For (ii), take $\varepsilon=0$ and $r=\lfloor c/2\rfloor$. Since $r\ge1$,  $(r-1)(n-c)+\floor{\frac{(r-1)r+(c-r+1)(k+1)}{2}}\le F_k(n)$. If $r\le k+1$, then $\min\{r,k+1\}=r$. 
Then the inequality in \textup{(ii)} follows immediately. If $r>k+1$, then $(r-1)(k+1)\le(r-1)r$.
Hence $\floor{\frac{(r-1)(k+1)+(c-r+1)(k+1)}{2}}\le\floor{\frac{(r-1)r+(c-r+1)(k+1)}{2}}$. This proves \textup{(ii)}.
\renewcommand{\qedsymbol}{$\blacksquare$}
\end{proof}

\begin{lemma}\label{lem:small-c-arithmetic}
Let $2\le k\le 6$, let
$k+2\le n\le\lceil{(5k+1)}/{2}\rceil$,
and let $c$ be an integer satisfying
$2\lfloor{(k+1)}/{2}\rfloor+2\le c<n$
and 
$c\le 9$.
Then
\begin{equation}\label{eq:small-c-arithmetic}
\Bigl\lfloor \frac{c}{2}\Bigr\rfloor(n-c)+
\Bigl\lfloor\frac{(\lfloor \frac{c}{2}\rfloor-1)\lfloor \frac{c}{2}\rfloor+(c-\lfloor \frac{c}{2}\rfloor+1)(k+1)}{2}\Bigr\rfloor
\le F_k(n),
\end{equation}
except when $(k,n,c)=(3,8,6)$. 
If $(k,n,c)=(3,8,6)$, the left-hand
side of \eqref{eq:small-c-arithmetic} is $17=F_3(8)+1$.
\end{lemma}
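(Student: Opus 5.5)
This is a finite check, so the plan is to enumerate every admissible triple $(k,n,c)$ and compare both sides of \eqref{eq:small-c-arithmetic}. The constraints are $2\le k\le 6$, $k+2\le n\le\lceil(5k+1)/2\rceil$, and $2\lfloor(k+1)/2\rfloor+2\le c\le \min\{9,n-1\}$. I would first tabulate $F_k(n)$ on the relevant ranges using Lemma~\ref{lem:B-equals-f}. Concretely, $F_k(n)=\lfloor (k+1)n/2\rfloor$ for $n\le\lfloor(1+\frac{\sqrt3}{2})(k+1)\rfloor$, then $\lfloor(2n+k+1)^2/24\rfloor$, and $(k+1)(n-k-1)$ at $n=\lceil(5k+1)/2\rceil$. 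Where convenient I would double-check these against $R_k$ and $J_k$ directly.

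Next I would record the range of $c$ for each $k$, since it is short:
\begin{itemize}[nosep]
\item $k=2$: $c\ge4$ and $n\le 6$, so $c\in\{4,5\}$.
\item $k=3$: $c\ge6$ and $n\le8$, so $c\in\{6,7\}$.
\item $k=4$: $c\ge6$ and $n\le11$, so $c\in\{6,\dots,9\}$.
\item $k=5$: $c\ge8$ and $n\le 13$, so $c\in\{8,9\}$.
\item $k=6$: $c\ge8$ and $n\le16$, so $c\in\{8,9\}$.
\end{itemize}

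Writing $h=\lfloor c/2\rfloor$, the left-hand side equals $h(n-c)+\lfloor((h-1)h+(c-h+1)(k+1))/2\rfloor$. For fixed $(k,c)$ this is linear in $n$ with slope $h$. To limit the number of evaluations, I would use monotonicity in $n$. For each $(k,c)$ I would evaluate the difference $F_k(n)-\mathrm{LHS}$ at the smallest admissible $n=c+1$. I would then note that the increments $F_k(n+1)-F_k(n)$ are at least $h$ over the range. This holds because $h\le 4$, while $R_k$ grows by about $(k+1)/2$ and the $J_k$ branch grows by about $a\ge\lfloor (k+1)/2\rfloor+1$. When the increment fails to dominate $h$, for example for $k=2$ or $3$, I would evaluate every $n$ individually.

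Sample checks:
\begin{itemize}[nosep]
\item $(k,n,c)=(2,5,4)$: LHS $=2+\lfloor(2+9)/2\rfloor=7=F_2(5)$.
\item $(k,n,c)=(2,6,4)$: LHS $=9=F_2(6)$.
\item $(k,n,c)=(2,6,5)$: LHS $=2+\lfloor(2+12)/2\rfloor=9$.
\item $(k,n,c)=(3,7,6)$: LHS $=3+\lfloor(6+16)/2\rfloor=14=F_3(7)$.
\item $(k,n,c)=(3,8,6)$: LHS $=6+11=17$, while $F_3(8)=16$. This is the stated exception.
\item $(k,n,c)=(3,8,7)$: LHS $=3+\lfloor(6+20)/2\rfloor=16$, which is fine.
\end{itemize}
The cases $k=4,5,6$ would be checked the same way, row by row.

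The main obstacle is that the margins are tight, often zero. Every floor must therefore be evaluated exactly rather than estimated. The difficulty lies in bookkeeping errors, not in ideas. A secondary point is making sure the exception list is complete, so each equality or near-equality case should be recomputed. As the paper suggests, a short computer check would confirm the table independently.
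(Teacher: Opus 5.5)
Your strategy, an exhaustive check of the finitely many triples $(k,n,c)$, is the same as the paper's. Your ranges of $c$, your six evaluations for $k=2,3$ (which exhaust those values of $k$), and the exception $(3,8,6)$ are all correct.

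\textbf{The gap.} The shortcut you plan for $k=4,5,6$, which you leave uncomputed, is false. You claim $F_k(n+1)-F_k(n)\ge h$, so that a check at $n=c+1$ propagates upward. Your fallback is therefore needed not only for $k=2,3$ but for nearly every row with $k\ge4$.
\begin{itemize}
\item The hypothesis $c\ge 2\lfloor (k+1)/2\rfloor+2$ forces $h=\lfloor c/2\rfloor\ge\lfloor (k+1)/2\rfloor+1>(k+1)/2$.
\item On the range where $F_k=R_k$, the increments are $\lfloor (k+1)/2\rfloor$ or $\lceil (k+1)/2\rceil$. These are at most $h$ and often strictly smaller: $F_4(9)-F_4(8)=22-20=2<3$ (relevant for $c=6,7$), $F_5(10)-F_5(9)=3<4$, and $R_6$ grows alternately by $3$ and $4$ while $h=4$.
\item So on that range the margin $F_k(n)-\mathrm{LHS}$ is non-increasing in $n$. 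A check at the smallest $n$ says nothing about larger $n$.
\end{itemize}
This is exactly where the tight cases lie:
\begin{itemize}
\item $(k,n,c)=(4,9,6)$ gives $22=F_4(9)$;
\item $(5,11,8)$ and $(5,12,8)$ give $33=F_5(11)$ and $37=F_5(12)$;
\item $(4,n,8)$ gives $4n-14=F_4(n)$ for $n=9,10,11$.
\end{itemize}

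\textbf{How to repair it.} The inequality does hold, so you can either evaluate all $38$ remaining triples one by one, or argue as the paper does.
\begin{itemize}
\item Write the left-hand side as an explicit linear function of $n$ for each $(k,c)$: $3n-5$, $3n-6$, $4n-14$, $4n-15$ for $k=4$; $4n-11$, $4n-12$ for $k=5$; $4n-9$ for $k=6$.
\item For small $n$, compare it with $R_k(n)$, whose slope $(k+1)/2$ is smaller than $h$.
\item For the remaining $n$ (if any), compare it with a single term $a(n-a)+\lfloor a(k+1-a)/2\rfloor\le J_k(n)$ whose slope $a\ge h$.
\end{itemize}
Some instances:
\begin{itemize}
\item For $k=5$, $c=8$: $4n-11\le 3n=R_5(n)$ for $n\le 11$, and $4n-11\le 5n-23\le J_5(n)$ for $n\ge 12$, each tight at the switch.
\item For $k=4$, $c\in\{8,9\}$: taking $a=4$ gives $J_4(n)\ge 4n-14$ directly.
\item For $k=6$: the bound $4n-9\le\lfloor 7n/2\rfloor$ already covers all $n\le 16$.
\end{itemize}
In other words, the correct monotone direction is to check at the top of the $R_k$-range and at the bottom of the $J_k$-range, which is what these linear comparisons encode.
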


\begin{proof}
We first list all possible values of $(k,c)$ in Table~\ref{tab:smallk}:
\begin{table}[H]
\centering
\begin{tabular}{c|c|c}
$k$ & $c$ & \text{left-hand side of \eqref{eq:small-c-arithmetic}}\\ \hline
2 & 4,5 & $2n-3$\\
3 & 6,7 & $3n-(c+1)$\\
4 & 6,7   & $3n-(c-1)$\\
4 & 8,9   & $4n-(c+6)$\\
5 & 8,9   & $4n-(c+3)$\\
6 & 8,9 & $4n-9$
\end{tabular}
\caption{$2\le k\le 6$}
\label{tab:smallk}
\end{table}

Suppose that $k=2$.
For $n=5$, $F_2(5)=\max\{\lfloor 15/2\rfloor,\,2\cdot5-3\}=7$.
For $n=6$, $F_2(6)=\max\{9,\,9\}=9$.  Hence $2n-3=F_2(n)$.

Suppose that $k=3$.
The possible pairs $(n,c)$ are $(7,6)$, $(8,6)$, $(8,7)$. The
left-hand side gives $14$, $17$, $16$, while $F_3(7)=14$ and
$F_3(8)=16$.  The only exception is $(k,n,c)=(3,8,6)$.

Suppose that $k=4$.
If $c=6$, the left-hand side of \eqref{eq:small-c-arithmetic} takes value $3n-5$.  For $n\le 10$, we have
$3n-5\le\lfloor5n/2\rfloor=R_4(n)\le F_4(n)$.
For $n=11$, we have
$3\cdot11-5=28<30=F_4(11)$.
If $c=7$, then $3n-6\le\lfloor5n/2\rfloor\le F_4(n)$.
If $c\in\{8,9\}$, the left-hand side of \eqref{eq:small-c-arithmetic} takes values $4n-14$ or $4n-15$. 
Hence \eqref{eq:small-c-arithmetic} holds because 
$J_{4}(n)=4n-14\le F_4(n)$.

Suppose that $k=5$.
For $n\le 11$, we have $4n-11\le 3n\le F_5(n)$.
For $n\ge 12$, both $4n-11$ and $4n-12$ are at most
$J_{5}(n)=5n-23\le F_5(n)$.

Suppose that $k=6$.
Since $n\le 16$, we obtain
$4n-9\le\bigl\lfloor\frac{7n}{2}\bigr\rfloor=R_6(n)\le F_6(n)$.

This completes the proof.
\renewcommand{\qedsymbol}{$\blacksquare$}
\end{proof}

\section{Tools: closure, edge-switching, and stability}\label{sec:tools}

We collect the closure, edge-switching, and circumference results used in the proof of Theorem~\ref{thm:main}. These results will be used to bound the number of edges inside $C$ and the number of edges with at least one endpoint outside $C$.

\subsection{Three families of graphs}\label{sec:graphs}

We use the graph $W_{n,s,t}$ and the exceptional families
$\mathcal X_{n,c}$ and $\mathcal Y_{n,c}$ from the stability results
in \cite{FKLV2018,FKV2016,MaNing2020}.
The families $\mathcal X_{n,c}$ and $\mathcal Y_{n,c}$ are defined
for odd $c$.

\begin{itemize}
    \item For integers $n,s,t$ with $1\le s\le \floor{{(t+1)}/{2}}$ and $t-s+1\le n$, define $W_{n,s,t}$ as follows. Let $S,T,U$ be pairwise disjoint sets with $\lvert S\rvert=s$, $\lvert T\rvert=t-2s+1$, and $\lvert U\rvert=n-t+s-1$. Make $S\cup T$ a clique, make $U$ independent, and join each vertex of $U$ to the vertices of $S$. The case $U=\emptyset$ is allowed. Equivalently, $W_{n,s,t}$ is obtained from a clique $K_{t-s+1}$ by adding $n-(t-s+1)$ vertices whose common neighborhood is a fixed $s$-set in the clique. Thus $e(W_{n,s,t})=\binom{t-s+1}{2}+s(n-t+s-1)$.
    \item A graph $G$ belongs to $\mathcal X_{n,c}$ if it has order $n$
    and a partition $A\cup B\cup X$ such that
$G[A]$ induces a clique $K_{\lfloor{c/2}\rfloor}$,
   both $B$ and $X$ are
    independent, $(A,B)$ is complete bipartite, and there exist
    $a\in A$ and $b\in B$ such that $N_G(x)=\{a,b\}$ for every
    $x\in X$.
    \item A graph $G$ belongs to $\mathcal Y_{n,c}$ if it has order $n$
    and a partition $A\cup B\cup Y$ such that
$G[A]$ induces a clique $K_{\lfloor{c/2}\rfloor}$,
   $B$ is
    independent, $(A,B)$ is complete bipartite, and $G[Y]$ is a nontrivial
    star forest (with at least two components, each containing an edge). There are vertices
    $a,b\in A$ such that every component $R$ of $G[Y]$ satisfies
    $N_G(V(R))=\{a,b\}$. If $\lvert V(R)\rvert\ge3$, then all leaves of $R$ have
    degree $2$ in $G$ and have a common neighbor in $\{a,b\}$.
\end{itemize}

\subsection{Classical and stability tools}
\begin{theorem}[Dirac~\cite{Dirac1952}]\label{thm:Dirac-cycle}
If $G$ is a $2$-connected graph with minimum degree $\delta(G)$, then $c(G)\ge \min\{\lvert V(G)\rvert,2\delta(G)\}$. In particular, if $G$ is non-Hamiltonian, then $c(G)\ge 2\delta(G)$.
\end{theorem}

\begin{theorem}[Ma--Ning~\cite{MaNing2020}]\label{thm:MN-exterior-stability}
Let $G$ be a $2$-connected graph on $n$ vertices, and let $C$ be a longest
cycle of length $c$, where $10\le c\le n-1$. If
$$
e(G-C)+e_G(V(G)\setminus V(C),V(C))
  >(\floor{c/2}-1)(n-c),
$$
then either $G\subseteq W_{n,\floor{c/2},c}$, or $c$ is odd and $G$ is a
subgraph of a member of $\mathcal{X}_{n,c}\cup\mathcal{Y}_{n,c}$.
\end{theorem}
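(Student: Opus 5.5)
We would follow the Kopylov-type strategy of Füredi, Kostochka, Luo and Verstraëte~\cite{FKLV2018,FKV2016}: bound each component of $G-C$ separately, then characterize the components that can beat the bound. Write $k_0:=\floor{c/2}$, and let $H_1,\dots,H_m$ be the components of $G-C$ with $h_i:=\lvert V(H_i)\rvert$. Then
$$e(G-C)+e_G(V(G)\setminus V(C),V(C))=\sum_{i}\bigl(e(H_i)+e_G(H_i,C)\bigr).$$
So it suffices to prove a \emph{local} inequality $e(H_i)+e_G(H_i,C)\le (k_0-1)h_i$ for every component, with a short explicit list of exceptions. The hypothesis then forces at least one exceptional component, and we must show that exceptional components rigidify the whole graph.

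\textbf{Step 1: the local inequality.} Fix a component $H$ and let $T:=N_C(H)=\{u_1,\dots,u_t\}$ in cyclic order. Since $C$ is longest, for each $j$ the segment $u_jCu_{j+1}$ has length at least $\ell+2$ whenever $u_j,u_{j+1}$ are joined by a $(u_j,H,u_{j+1})$-path with $\ell$ interior vertices. If $u_j$ and $u_{j+1}$ have distinct neighbours in $H$, then $\ell$ can be taken to be at least the length of a longest path in $H$ between suitable vertices. This gives $c\ge \sum_j(\ell_j+2)$.

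Combined with the Erdős--Gallai/Kopylov bound on $e(H)$ in terms of path lengths in $H$, and with 2-connectivity of $G$ (so $t\ge 2$), this yields $e(H)+e_G(H,C)\le (k_0-1)h$ except in the following cases:
\begin{itemize}[nosep]
\item[(a)] $h=1$ and the vertex has exactly $k_0$ neighbours on $C$;
\item[(b)] $c$ is odd and $H$ is a single vertex of degree $2$, or a star, whose attachments are restricted to two vertices of $C$, exactly as in $\mathcal X_{n,c}$ and $\mathcal Y_{n,c}$.
\end{itemize}
The assumption $c\ge10$ is what kills the small sporadic exceptions in this estimate. We expect this step to be routine but lengthy.

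\textbf{Step 2: rigidity from one exceptional vertex.} Suppose some $x\notin V(C)$ has $k_0$ neighbours on $C$. As $C$ is longest, no two neighbours of $x$ are consecutive on $C$. Hence $S:=N_C(x)$ consists of every other vertex of $C$, with one double gap when $c$ is odd.

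Swapping $x$ into $C$ in place of a vertex $v\in V(C)\setminus S$ produces another longest cycle. Applying the same reasoning to it, and to the new outside vertex $v$, shows two things. First, every vertex of $V(C)\setminus S$ (for $c$ even) behaves like $x$. Second, the set $V(C)\setminus S$ together with the outside vertices must form an independent set whose only neighbours lie in $S$. The exception is the double gap for odd $c$, which produces the set $T$ of size $t-2s+1$ inside the clique.

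Every other component must then also attach only to $S$, since otherwise the longest cycle extends. This gives $G\subseteq W_{n,k_0,c}$, because adding all edges inside $S\cup T$ keeps us inside $W_{n,k_0,c}$.

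\textbf{Step 3: no type-(a) component.} If no component is of type (a), the excess in the sum must come from type-(b) components. We then show that these components all share the same two attachment vertices $a,b$, and that $C$ together with the type-(b) components forces the remaining vertices into the pattern of $\mathcal X_{n,c}$ or $\mathcal Y_{n,c}$: a clique $A$ of size $k_0$, complete to an independent set $B$. This again uses rerouting of $C$ through the components.

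\textbf{Main obstacle.} We expect the main obstacle to be Step 3 together with the odd-$c$ case of Step 2. There the local inequality is tight for several different shapes of components, and the proof must rule out mixtures of them. Our first attempt would be a closure argument: pass to $\cl_C(G)$, which preserves the hypotheses because the closure does not change the length of a longest cycle. We would then apply a Bondy-type stability statement for the $(c+1)$-closed graph $G[V(C)]$ to pin down the clique $A$ before handling the outside vertices.
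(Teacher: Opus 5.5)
The paper does not prove this statement; it imports it from Ma--Ning, whose argument runs through the $C$-closure and the switching and strong-attachment tools of Section~\ref{sec:tools}. Your componentwise, Kopylov-style route is a legitimate alternative in principle, but the case split is misplaced and Step~2 asserts something false.

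First, your type-(b) components are not exceptions to the local inequality at all. Since $c\ge10$ gives $k_0\ge5$, a degree-$2$ vertex contributes $2<k_0-1$. A star $H$ whose neighbours on $C$ lie in a $2$-set contributes at most $(\lvert V(H)\rvert-1)+2\lvert V(H)\rvert<(k_0-1)\lvert V(H)\rvert$. So, granting Step~1, the hypothesis can only hold if some vertex outside $C$ has exactly $k_0$ neighbours on $C$. Step~3 therefore treats an empty case. Every member of $\mathcal X_{n,c}\cup\mathcal Y_{n,c}$ that satisfies the hypothesis lands in Step~2, where you conclude $G\subseteq W_{n,k_0,c}$.

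That conclusion is false for odd $c$. Take $c=11$ and $n=17$. Let $A=\{a,a_1,\dots,a_4\}$ be a clique, let $B=\{b,b_1,\dots,b_9\}$ be independent and complete to $A$, and add $x_1,x_2$ with $N(x_1)=N(x_2)=\{a,b\}$.
\begin{itemize}[nosep]
\item This graph lies in $\mathcal X_{17,11}$ and is $2$-connected.
\item $C=a\,x_1\,b\,a_1\,b_1\,a_2\,b_2\,a_3\,b_3\,a_4\,b_4\,a$ is a longest cycle. Indeed, $G-A$ is the path $x_1bx_2$ plus isolated vertices, so every cycle other than $ax_1bx_2a$ has at most $\lvert A\rvert$ maximal runs outside $A$. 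All these runs have one vertex, except at most one run of two vertices.
\item The outside vertices $x_2,b_5,\dots,b_9$ send $2+25=27>24=(k_0-1)(n-c)$ edges to $C$, and $b_5,\dots,b_9$ are type-(a) vertices with $S=A$.
\item Yet $G\not\subseteq W_{17,5,11}$. In $W_{17,5,11}$ only the five vertices of $S$ have degree above $6$, whereas here all six vertices of $A\cup\{b\}$ have degree at least $7$.
\end{itemize}

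The sentence that breaks is ``every other component must then also attach only to $S$''. Here $x_2$ attaches to $a\in S$ and to the double-gap vertex $b$ of $C$, and no longer cycle appears. In $\mathcal Y_{n,c}$ the outside components do attach only to $S$, but they are stars containing edges, so your independence conclusion fails instead.

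What is missing is a genuine odd-$c$ analysis inside Step~2. With $x$ and $S=N_C(x)$ fixed, you must show that every other component is one of the following:
\begin{itemize}[nosep]
\item a vertex with all neighbours in $S$;
\item a vertex adjacent to a flank vertex $a\in S$ of the double gap and to the gap vertex not adjacent to $a$ on $C$ (this gives $\mathcal X_{n,c}$);
\item a star attached to the two flank vertices (this gives $\mathcal Y_{n,c}$).
\end{itemize}
You must also show that these patterns coexist only as the definitions allow. For even $c$, your Step~2 can be completed as written.

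Two smaller points. Step~1 is only asserted, and the paper obtains even the weaker bound $\floor{c/2}(n-c)$ (Lemma~\ref{lem:exterior-edge-bound}(ii)) only after Fan--Lv--Wang switching. Also, the paper only uses that $C$ stays \emph{locally maximal} in $\cl_C(G)$ (Lemma~\ref{lem:closure-preserves-local-maximality}), not that it stays longest, so your closure fallback needs that claim justified.
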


\begin{lemma}\label{lem:exceptional-low-degree-or-W}
Let $c<n$ be odd and let
$G\in \mathcal{X}_{n,c}\cup\mathcal{Y}_{n,c}$. 
Then either $G\subseteq W_{n,\floor{c/2},c}$, or $G$ has a vertex of degree at most $3$.
\end{lemma}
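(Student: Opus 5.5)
Set $h:=\floor{c/2}=(c-1)/2$; since $c$ is odd, $c=2h+1$. The plan is a direct structural check of the two families. In each case we either find a vertex of degree at most $3$ or exhibit $G$ inside $W_{n,h,c}$. Recall that $W_{n,h,c}$ consists of a clique on $S\cup T$ with $\lvert S\rvert=h$ and $\lvert T\rvert=c-2h+1=2$, together with $n-h-2$ further independent vertices whose neighbourhood is exactly $S$. So $G\subseteq W_{n,h,c}$ means: there is an $h$-set $S$ and a set $T$ of at most two further vertices such that every edge of $G$ either lies inside $S\cup T$ or meets $S$.

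\textbf{Case $G\in\mathcal X_{n,c}$.} Here $V(G)=A\cup B\cup X$, $A$ is a clique on $h$ vertices, and $N_G(x)=\{a,b\}$ for every $x\in X$. If $X\neq\varnothing$, any $x\in X$ has degree $2\le 3$ and we are done. If $X=\varnothing$, take $S=A$ and $T=\varnothing$. Since $B$ is independent, every edge of $G$ meets $A$, and each $v\in B$ satisfies $N_G(v)\subseteq A=S$. Hence $G\subseteq W_{n,h,c}$; the two vertices playing the role of $T$ may be any two vertices of $B$, or $T$ is simply unused.

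\textbf{Case $G\in\mathcal Y_{n,c}$.} Here $G[Y]$ is a star forest with at least two components, each containing an edge, and $N_G(V(R))=\{a,b\}$ for every component $R$. Pick any component $R$.
\begin{itemize}[nosep]
\item If $\lvert V(R)\rvert\ge3$, the definition says the leaves of $R$ have degree $2$ in $G$.
\item If $R$ is a single edge $yy'$, then $N_G(y)\subseteq\{y'\}\cup\{a,b\}$, so $d_G(y)\le 3$.
\end{itemize}
In both subcases $G$ has a vertex of degree at most $3$. Since $Y\neq\varnothing$ by definition, the second alternative of the lemma always holds for $\mathcal Y_{n,c}$.

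The argument is essentially bookkeeping, and the only step needing care is the containment $G\subseteq W_{n,h,c}$ in the degenerate case $X=\varnothing$. For that we only need the parameter conditions defining $W_{n,h,c}$: $1\le h\le\floor{(c+1)/2}$ holds, and $n\ge c-h+1=h+2$ holds because $n>c$. Beyond that, edges inside $A$ are edges of the clique $S$, and edges between $A$ and $B$ are edges from $S$ to vertices outside it. Both kinds are present in $W_{n,h,c}$, whether the endpoint in $B$ is placed in $T$ or in $U$.
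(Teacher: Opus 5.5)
Your proof is correct and follows essentially the same route as the paper. For $\mathcal X_{n,c}$ you use either a degree-$2$ vertex of $X$ or, when $X=\varnothing$, the embedding $S=A$, $T=\{b_1,b_2\}\subseteq B$, $U=B\setminus T$ into $W_{n,\lfloor c/2\rfloor,c}$; for $\mathcal Y_{n,c}$ you use a degree-$2$ leaf or an endpoint of an edge-component, which has degree at most $3$. One cosmetic point: $T$ cannot be ``unused'', since $\lvert T\rvert=2$ is fixed in $W_{n,h,c}$. However, your check $n\ge h+2$ gives $\lvert B\rvert=n-h\ge 2$, which is exactly what the paper uses to place two vertices of $B$ in $T$.
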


\begin{proof}
First, suppose that $G\in\mathcal{X}_{n,c}$, with partition
$V(G)=A\cup B\cup X_0$. If $X_0\ne\emptyset$, then every vertex of $X_0$ has neighborhood exactly $\{a,b\}$ and hence degree $2$.

Thus assume $X_0=\emptyset$. Then $V(G)=A\cup B$ such that $G[A]$ induces a clique $K_{\lfloor{c/2}\rfloor}$,
   $B$ is
    independent, $(A,B)$ is complete bipartite. Since $c$ is odd and $n>c$, we have
$\lvert B\rvert=n-\floor{c/2}\ge \floor{c/2}+2\ge2$. Choose distinct
$b_1,b_2\in B$. In the partition of $W_{n,\floor{c/2},c}$,
take
$S=A$, $T=\{b_1,b_2\}$, $U=B\setminus\{b_1,b_2\}$.
Indeed, $\lvert T\rvert=c-2\floor{c/2}+1=2$. Then every edge of $G$ is an edge of
this copy of $W_{n,\floor{c/2},c}$. Hence
$G\subseteq W_{n,\floor{c/2},c}$.

Now suppose that $G\in\mathcal{Y}_{n,c}$, with partition
$V(G)=A\cup B\cup Y_0$. If some star component of $G[Y_0]$ has order at
least $3$, then one of its leaves has degree $2$ in $G$. Otherwise every
star component of $G[Y_0]$ has order $2$. For any such component $uv$,
the condition $N_G(\{u,v\})=\{a,b\}$ gives
$d_G(u),d_G(v)\le3$. Thus $G$ has a vertex of degree at most $3$.
\renewcommand{\qedsymbol}{$\blacksquare$}
\end{proof}

\subsection{Some results on cycles}

The following lemma is a slight variant of a lemma of Ma and Ning~\cite[Lemma~2.11]{MaNing2020}.
It will be used to obtain the edge bound.

\begin{lemma}[{\normalfont Variant of Ma--Ning~\cite[Lemma~2.11]{MaNing2020}}]
\label{lem:Ma-Ning-cycle-dichotomy}
Let $G$ be a $2$-connected graph and let $C$ be a locally maximal cycle of
length $c<\lvert V(G)\rvert$. Set $\widehat G=\cl_C(G)$ and $\widehat H=\widehat G[V(C)]$.
Then one of the following holds.
\begin{enumerate}[label=\textup{(\roman*)},nosep]
\item There exist an integer $s$ and a set $S\subseteq V(C)$ such that
$2\le s\le \floor{c/2}-1$, $\lvert S\rvert=s-1$, $d_{\widehat H}(u)\le s$ for every $u\in S$,
and ${\widehat H}-S$ is a clique.
\item There exists a set $R\subseteq V(C)$ such that
$\lvert R\rvert=\floor{c/2}-1$ and $d_{\widehat H}(u)\le \floor{c/2}$ for every $u\in R$.
\end{enumerate}
\end{lemma}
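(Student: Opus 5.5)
The plan is to work entirely inside $\widehat H=\widehat G[V(C)]$, a graph on $c$ vertices which contains the Hamilton cycle $C$. By construction of $\cl_C(G)$ it is $(c+1)$-closed: any two nonadjacent $x,y\in V(C)$ satisfy $d_{\widehat H}(x)+d_{\widehat H}(y)\le c$. In particular $\delta(\widehat H)\ge 2$. The local maximality of $C$ (and the $2$-connectivity of $G$) is needed only to make sense of the closure and its later use, namely that $\widehat G$ still has no cycle longer than $c$ of the relevant type. For the dichotomy itself I expect to use only the closedness and the Hamiltonicity of $\widehat H$. Write $d(\cdot)$ for $d_{\widehat H}(\cdot)$ and $f(t):=\lvert\{v\in V(C): d(v)\le t\}\rvert$.

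\emph{Step 1 (alternative (ii)).} If $f(\floor{c/2})\ge\floor{c/2}-1$, choose $R$ to be any $\floor{c/2}-1$ of these vertices; this is (ii). So assume $f(\floor{c/2})\le \floor{c/2}-2$. Every vertex outside $D:=\{v: d(v)\le\floor{c/2}\}$ then has degree at least $\floor{c/2}+1$. Any two such vertices have degree sum at least $2\floor{c/2}+2\ge c+1$, so they are adjacent, and hence $\widehat H-D$ is already a clique.

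\emph{Step 2 (choosing $s$).} We have $f(1)=0\le 1$ and $f(\floor{c/2})\le\floor{c/2}-2$. Let $s$ be the \emph{smallest} integer with $2\le s\le\floor{c/2}-1$ such that $\widehat H-\{v:d(v)\le s\}$ is a clique and $f(s)\le s-1$; by Step 1, $s=\floor{c/2}-1$ is a candidate whenever the clique condition holds there. Given such an $s$, the plan is to show $f(s)=s-1$ exactly, so that $S:=\{v:d(v)\le s\}$ has size $s-1$ and gives (i). The tool is the Chvátal-type counting argument. Suppose $u,v$ are nonadjacent with $d(u)\le d(v)$, and choose the pair with maximum degree sum. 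Every $w\ne v$ nonadjacent to $v$ has $d(w)\le c-d(v)$. Using the Hamilton cycle $C$ to pair neighbours of $u$ with non-neighbours of $v$ (the usual $x\mapsto x^{-}$ shift along $C$), one obtains at least $d(u)-1$ vertices of degree at most $d(u)$. Applied to the nonadjacent pair of largest degree sum outside a candidate $S$, this shows that shrinking $S$ to fewer than $s-1$ vertices would leave a non-clique. That should be exactly what makes $\lvert S\rvert=s-1$ forced, and the pair also certifies $d(x)\le s$ for $x\in S$ with $s=d(u)\le c/2$.

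\emph{Main obstacle.} The delicate point is Step 2: matching the exact cardinality $\lvert S\rvert=s-1$ with the degree bound $s$ and the range $2\le s\le\floor{c/2}-1$ simultaneously. In particular, the extreme case $s=\floor{c/2}$ must be shown to fall into alternative (ii), and the case where the pair yields $d(u)=1$ must be ruled out via $\delta(\widehat H)\ge2$. I would first try to run the proof of Ma--Ning~\cite[Lemma~2.11]{MaNing2020} verbatim, since the difference in this variant is only the bookkeeping of these boundary cases. If that falls short, I would argue directly: take the nonadjacent pair $u,v$ in $\widehat H$ maximizing $d(u)+d(v)$, set $s:=d(u)$, and show that the $s-1$ vertices of smallest degree can be taken as $S$ and that their removal kills every non-edge.
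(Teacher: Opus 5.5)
There is a genuine gap, and it sits exactly where you say you will not need the hypotheses. Closedness and Hamiltonicity of $\widehat H$ alone cannot give the dichotomy. For $c\ge4$, the complete graph $K_c$ is Hamiltonian and (vacuously) $(c+1)$-closed. Every vertex has degree $c-1>\lfloor c/2\rfloor$ and $\lfloor c/2\rfloor-1\ge1$, so neither (i) nor (ii) holds.

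Concretely, your Step 2 wants $f(s)=s-1$, but nothing in your setup forces $f(s)\ge s-1$. For $K_c$ with $c\ge6$, $s=2$ passes your candidate test while $f(2)=0$. Two further problems:
\begin{itemize}
\item Step 1 does not make $s=\lfloor c/2\rfloor-1$ a candidate. It only yields a clique after deleting the vertices of degree at most $\lfloor c/2\rfloor$, not those of degree at most $\lfloor c/2\rfloor-1$.
\item The $x\mapsto x^-$ shift along $C$ produces nothing. That Chv\'atal-type counting needs an obstruction, such as a nonadjacent pair joined by a Hamilton path in a non-Hamiltonian graph, whereas $\widehat H$ is Hamiltonian.
\end{itemize}

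The missing idea is that $\widehat H$ is not Hamiltonian-connected (Lemma~\ref{lem:Ma-Ning-non-ha}). This is where $2$-connectivity, $c<\lvert V(G)\rvert$ and local maximality enter. $C$ stays locally maximal in $\widehat G$ (Lemma~\ref{lem:closure-preserves-local-maximality}). Take a component $Q$ of $G-C$ with two distinct attachment vertices $x,y$. A Hamilton $x$--$y$ path in $\widehat H$ together with an $(x,Q,y)$-path of length at least $2$ would give a longer cycle using only two edges leaving $C$.

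Lemma~\ref{lem:Ma-Ning-degree} then gives some $2\le s\le\lfloor c/2\rfloor$ with at least $s-1$ vertices of degree at most $s$. This is precisely the lower bound on $f$ that you lack. The paper takes this $s$ \emph{maximal}, not minimal:
\begin{itemize}
\item If $s=\lfloor c/2\rfloor$, alternative (ii) holds.
\item Otherwise $f(s)\ge s$ would make $s+1$ admissible, so $S_0=\{v:d(v)\le s\}$ has exactly $s-1$ vertices.
\item If $\widehat H-S_0$ is not a clique, take nonadjacent $u,v$ outside $S_0$, with $d(u)$ maximal among vertices outside $S_0$ that have a non-neighbour outside $S_0$. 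By closedness, the $c-1-d(u)$ non-neighbours of $u$ all have degree at most $s':=c-d(u)$. Since $v$ is one of them, $s'>s$.
\item Maximality of $s$ then forces $s'\ge\lfloor c/2\rfloor+1$. So there are at least $\lfloor c/2\rfloor$ such non-neighbours, each of degree at most $\lfloor c/2\rfloor$: those in $S_0$ by the bound $s$, the others by the choice of $u$. This is (ii).
\end{itemize}
No shift argument is needed at this stage, only the closure inequality. All the Hamiltonicity-type reasoning is packaged into Lemma~\ref{lem:Ma-Ning-degree}, applied to a non-Hamiltonian-connected graph.
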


To prove the lemma, we need some results of Ma and Ning~\cite{MaNing2020}. 

\begin{lemma}[{\normalfont Ma--Ning~\cite[Lemma~2.7]{MaNing2020}}]
\label{lem:closure-preserves-local-maximality}
Let $G$ be a graph and let $C$ be a locally maximal cycle of $G$. Then $C$ is locally maximal in $\cl_C(G)$.
\end{lemma}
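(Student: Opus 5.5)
The plan is to add the closure edges one at a time and check that local maximality survives each step. By definition, $\cl_C(G)$ arises from $G$ by a sequence $G=H_0\subseteq H_1\subseteq\cdots\subseteq H_m=\cl_C(G)$. Here $H_{j+1}=H_j+xy$ for some nonadjacent $x,y\in V(C)$ with $d_{H_j[V(C)]}(x)+d_{H_j[V(C)]}(y)\ge c+1$. No edge of $E(V(C),V(G)\setminus V(C))$ and no edge outside $C$ is ever added. By induction on $j$ it suffices to prove one step: if $C$ is locally maximal in $H$, and $xy$ is such a closure edge, then $C$ is locally maximal in $H+xy$.

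Suppose not. Then there is a cycle $C'$ in $H+xy$ with $|E(C')|>c$ that uses at most two edges between $V(C)$ and the outside. Since $C$ is locally maximal in $H$, the cycle $C'$ must use $xy$. So $P:=C'-xy$ is an $x$--$y$ path in $H$ with at least $c+1$ vertices, using the same (at most two) cross edges. Such a path cannot live inside $V(C)$, which has only $c$ vertices. Hence $P$ uses exactly two cross edges and consists of $C$-vertices together with a single excursion $Q$ through $G-C$, where $|V(Q)|\ge c+1-p$ and $p:=|V(P)\cap V(C)|$.

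Write $P=v_0v_1\cdots v_\ell$ with $v_0=x$, $v_\ell=y$, $\ell\ge c$. The Bondy-type crossing argument says: if $xv_{i+1},yv_i\in E(H)$ for some $i$, then $xv_{i+1}Pv_\ell v_iPv_0$ is a cycle in $H$ on all $\ell+1\ge c+1$ vertices of $P$. Its cross edges are among those of $P$, so it has at most two, which contradicts local maximality of $C$ in $H$. Thus the sets $\{i: v_{i+1}\in N(x)\}$ and $\{i: v_i\in N(y)\}$ are disjoint in $\{0,\dots,\ell-1\}$. Only $C$-vertices count toward the degrees in $H[V(C)]$, and the interior vertices of $Q$ occupy at least $c+1-p$ positions that are neither $x$-predecessor-neighbours nor $y$-neighbours. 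This yields $d_{P}(x)+d_{P}(y)\le p-1$, where $d_P$ counts neighbours in $H[V(C)]$ lying on $P$.

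The main obstacle is the $c-p$ vertices of $C$ not on $P$, each of which may be adjacent to both $x$ and $y$ in $H[V(C)]$. The naive bound $d(x)+d(y)\le (p-1)+2(c-p)$ is too weak. My first attempt is to exploit the slack: the excursion forces $\ell\ge c$ while only $p$ vertices of $P$ lie on $C$. Among all such counterexample paths $P$ (cycles $C'$), I would choose one maximizing $|V(P)\cap V(C)|$. If $x$ has a neighbour $z\in V(C)\setminus V(P)$, I would try to reroute: use an arc of $C$ from $z$ to reach $P$, or replace $x$ by $z$ through a segment of $C$, to lengthen $P$ or increase $p$ while keeping at most two cross edges. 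If the rerouting is impossible, then $x$ and $y$ have few off-path neighbours. A short charging argument (each off-path common neighbour would give a longer path) should then bring the total down to $d(x)+d(y)\le c$, contradicting the closure condition $\ge c+1$. If rerouting proves messy, the fallback is to follow Ma--Ning's original proof of their Lemma~2.7, which handles exactly this bookkeeping.
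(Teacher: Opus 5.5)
Your reduction is sound. You add closure edges one at a time, a longer cycle $C'$ in $H+xy$ with at most two cross edges must use $xy$, and $P=C'-xy$ is an $x$--$y$ path of $H$ on $\ell+1\ge c+1$ vertices with exactly two cross edges and one excursion outside $C$. The crossing step on $P$ is also correct. The gap is the step you flag yourself: you never bound the contribution of $V(C)\setminus V(P)$. The proposed remedy (an extremal choice of $P$, rerouting along arcs of $C$, an unspecified charging argument, or deferring to Ma--Ning) is not an argument. The missing observation is one line. Suppose $z\in V(C)\setminus V(P)$ is adjacent to both $x$ and $y$. Then $xPyzx$ is a cycle of $H$ that avoids $xy$ and has $\ell+2>c$ vertices. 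Its cross edges are exactly the two cross edges of $P$, since $xz$ and $zy$ lie inside $V(C)$. This contradicts the local maximality of $C$ in $H$. Hence each vertex of $V(C)\setminus V(P)$ is adjacent to at most one of $x,y$, and these vertices contribute at most $c-p$ to $d_{H[V(C)]}(x)+d_{H[V(C)]}(y)$. No rerouting is needed.

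Your on-path count also needs a small correction. The vertices of $P$ outside $C$ form one block of $\ell+1-p$ consecutive vertices. This block excludes only the $\ell-p$ indices $i$ with $v_i,v_{i+1}$ both in the block, not $\ell-p+1$. The disjointness of $\{i: v_{i+1}\in V(C)\cap N(x)\}$ and $\{i: v_i\in V(C)\cap N(y)\}$ therefore gives $d_P(x)+d_P(y)\le p$, not $p-1$, and equality can occur. For example, take $C=xv_1yv_3x$, an outside vertex $u$ adjacent to $v_1$ and $v_3$, and $P=xv_1uv_3y$. Then $C$ is locally maximal and $d_P(x)+d_P(y)=4=p$. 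The bound $p$ suffices, though: $d_{H[V(C)]}(x)+d_{H[V(C)]}(y)\le p+(c-p)=c<c+1$, which contradicts the closure condition. With these two repairs your argument is a complete proof. The paper itself does not prove this lemma; it only cites Ma--Ning's Lemma~2.7.
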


\begin{lemma}[{\normalfont Ma--Ning~\cite[Lemma~2.8]{MaNing2020}}]
\label{lem:Ma-Ning-non-ha}
Let $G$ be a $2$-connected graph on $n$ vertices and let $C$ be a locally maximal cycle of
length $c<n$. Then $\cl_C(G)[V(C)]$ is non-Hamiltonian-connected.
\end{lemma}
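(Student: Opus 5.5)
The plan is to argue by contradiction, using the already stated Lemma~\ref{lem:closure-preserves-local-maximality} to move the problem into the closed graph. Set $\widehat G=\cl_C(G)$ and $\widehat H=\widehat G[V(C)]$. By Lemma~\ref{lem:closure-preserves-local-maximality}, $C$ is still locally maximal in $\widehat G$. Also, by definition of the $C$-closure, only adjacencies inside $V(C)$ are changed. Hence $\widehat G-C=G-C$ and $E_{\widehat G}(V(C),V(G)\setminus V(C))=E_G(V(C),V(G)\setminus V(C))$. Suppose, for a contradiction, that $\widehat H$ is Hamiltonian-connected.

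The first step is to build an outside path. Since $c<n$, the graph $G-C$ has a component $H$. Since $G$ is $2$-connected, $N_C(H)$ contains at least two vertices; otherwise a single attachment vertex would separate $H$ from the rest of $C$, and $C$ has at least three vertices. Choose distinct $x,y\in N_C(H)$, a neighbor $u\in N_H(x)$ and a neighbor $w\in N_H(y)$. We may take $u\ne w$ when possible, but $u=w$ is allowed. Let $P$ be a $u$--$w$ path in $H$. Then $xuPwy$ is an $(x,H,y)$-path of length at least $2$. Its internal vertices lie outside $C$, and it uses exactly two edges of $E_{\widehat G}(V(C),V(G)\setminus V(C))$, namely $xu$ and $wy$.

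The second step is to close this path into a cycle. Since $\widehat H$ is Hamiltonian-connected and $x\ne y$, there is a Hamilton $x$--$y$ path $Q$ in $\widehat H$; it has $c-1$ edges. The union $Q\cup xuPwy$ is a cycle in $\widehat G$, because $Q$ lies inside $V(C)$ and the other path meets $V(C)$ only in $x$ and $y$. Its length is at least $(c-1)+2=c+1>c$. It uses exactly two edges between $V(C)$ and its complement. This contradicts the local maximality of $C$ in $\widehat G$, so $\widehat H$ is not Hamiltonian-connected.

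I do not expect a real obstacle. The only points needing care are the following. First, the closure does not alter $G-C$ or the edges leaving $C$, so the path $xuPwy$ survives in $\widehat G$. Second, local maximality transfers to $\widehat G$, which is exactly what Lemma~\ref{lem:closure-preserves-local-maximality} provides. Third, $2$-connectivity guarantees two distinct attachment vertices $x$ and $y$.
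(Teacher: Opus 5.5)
Your proof is correct and complete. The paper gives no proof of its own here and simply cites \cite[Lemma~2.8]{MaNing2020}; your argument is the natural proof of that cited lemma. It transfers local maximality to $\cl_C(G)$ via Lemma~\ref{lem:closure-preserves-local-maximality}, using that the closure leaves $G-C$ and the edges leaving $V(C)$ unchanged, and then closes an $(x,H,y)$-ear with a Hamilton $x$--$y$ path of $\cl_C(G)[V(C)]$. This yields a cycle of length at least $c+1$ that uses exactly two edges between $V(C)$ and its complement, contradicting local maximality.
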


\begin{lemma}[{\normalfont Ma--Ning~\cite[Lemma~2.10]{MaNing2020}}]
\label{lem:Ma-Ning-degree}
Let $G$ be a non-Hamiltonian-connected graph on $n$ vertices with minimum
degree at least $2$. Then there exists an integer $s$ with
$2\le s\le \floor{n/2}$ such that $G$ has at least $s-1$ vertices of degree
at most $s$.
\end{lemma}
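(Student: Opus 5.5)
The plan is to pass to the $(n+1)$-closure and then run Chv\'atal's degree-sequence argument on a maximal nonadjacent pair. The heavy lifting is done by the Bondy--Chv\'atal closure principle for Hamiltonian-connectedness~\cite{Bondy1971,Chvatal1972}. It says that if $d_G(x)+d_G(y)\ge n+1$ for nonadjacent $x,y$, then $G$ is Hamiltonian-connected if and only if $G+xy$ is. Consequently a graph is Hamiltonian-connected if and only if its $(n+1)$-closure is.

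\textbf{Step 1 (the closure is not complete).} Let $H$ be the $(n+1)$-closure of $G$. Since $G$ is not Hamiltonian-connected, neither is $H$. Because every complete graph is Hamiltonian-connected, $H\neq K_n$. In particular $H$ has nonadjacent pairs, and since $H$ is $(n+1)$-closed, every such pair satisfies $d_H(x)+d_H(y)\le n$. Also $d_H(w)\ge d_G(w)$ for every vertex $w$, since closure only adds edges. I expect verifying the closure principle itself to be the only real obstacle, and I would cite it rather than reprove it. If a proof were required, I would take a Hamilton $a$--$b$ path in $H+xy$ through $xy$ and use the standard crossing-neighbour count on the two resulting subpaths to reroute around $xy$; the degree condition $d(x)+d(y)\ge n+1$, one more than for Hamiltonicity, accounts for the fixed endpoints.

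\textbf{Step 2 (Chv\'atal's extremal pair).} Among all nonadjacent pairs in $H$, choose $u,v$ with $d_H(u)+d_H(v)$ maximum and $d_H(u)\le d_H(v)$. Set $s:=d_H(u)$. Then $s\ge d_G(u)\ge\delta(G)\ge2$. Moreover $2s\le d_H(u)+d_H(v)\le n$, so $s\le\floor{n/2}$.

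\textbf{Step 3 (counting low-degree vertices).} Let $W$ be the set of vertices other than $v$ that are not adjacent to $v$ in $H$. Then $u\in W$ and
\[
\lvert W\rvert=n-1-d_H(v)\ge n-1-(n-d_H(u))=s-1.
\]
For each $w\in W$, the pair $w,v$ is nonadjacent, so maximality gives $d_H(w)+d_H(v)\le d_H(u)+d_H(v)$, that is, $d_H(w)\le s$. Hence $d_G(w)\le d_H(w)\le s$ for all $w\in W$. This produces at least $s-1$ vertices of $G$ of degree at most $s$ with $2\le s\le\floor{n/2}$, which is exactly the assertion of Lemma~\ref{lem:Ma-Ning-degree}.
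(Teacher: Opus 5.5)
Your proof is correct. Given the Bondy--Chv\'atal fact that Hamiltonian-connectedness is $(n+1)$-stable (a standard theorem, and your crossing-count sketch is the right way to prove it), the extremal nonadjacent pair $u,v$ in the closure gives $2\le s=d_H(u)\le\lfloor n/2\rfloor$, and the $n-1-d_H(v)\ge s-1$ non-neighbours of $v$ all have degree at most $s$ in $H$, hence in $G$. The paper does not prove this lemma; it imports it from Ma--Ning, where it is the Chv\'atal-type degree-sequence obstruction to Hamiltonian-connectedness, and your closure-plus-maximal-pair argument is the standard proof of that criterion, so you have essentially unpacked the citation.
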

We now prove Lemma~\ref{lem:Ma-Ning-cycle-dichotomy}.

\begin{proof}[\bf Proof of Lemma~\ref{lem:Ma-Ning-cycle-dichotomy}]
By Lemma~\ref{lem:Ma-Ning-non-ha}, ${\widehat H}$ is non-Hamiltonian-connected. Since
$C\subseteq {\widehat H}$, we have $\delta({\widehat H})\ge 2$. 
Thus
Lemma~\ref{lem:Ma-Ning-degree} applies.
Then there exists an integer $s$ with
$2\le s\le \floor{c/2}$ such that $\widehat H$ has at least $s-1$ vertices of degree
at most $s$. Take such an $s$ to be maximal.

If $s=\floor{c/2}$, then case \textup{(ii)} holds by taking any $\floor{c/2}-1$ of these vertices. Thus we may assume $s\le \floor{c/2}-1$.
Let $S_0=\{v\in V(C):d_{\widehat H}(v)\le s\}$. We show that either $\widehat H-S_0$ is a clique, which gives \textup{(i)}, or \textup{(ii)} holds. By the choice of $s$, we have $\lvert S_0\rvert\ge s-1$. If $\lvert S_0\rvert\ge s$, then there are at least $s$ vertices of degree at most $s$. Since $s+1\le \floor{c/2}$, this contradicts the maximality of $s$. Hence $\lvert S_0\rvert=s-1$.

If ${\widehat H}-S_0$ is a clique, then alternative \textup{(i)} holds. Thus, we may
assume that ${\widehat H}-S_0$ is not a clique. We show that
alternative \textup{(ii)} holds under this assumption.
Let $X=\{x\in V(\widehat H)\setminus S_0:\text{ $x$ has a non-neighbor in $\widehat H-S_0$}\}$.
Choose two nonadjacent vertices $u,v\in X$ such that $d_{\widehat H}(u)$ is maximum among all vertices in $X$. Set
$S'=V(C)\setminus (N_{\widehat H}(u)\cup\{u\})$ and $s'=\lvert S'\rvert+1=c-d_{\widehat H}(u)$.

For every $w\in S'$, $w$ and $u$ are nonadjacent. Since ${\widehat H}$ is $(c+1)$-closed, $d_{\widehat H}(w)+d_{\widehat H}(u)\le c$.
Hence $d_{\widehat H}(w)\le s'$. 
Since
$v\in S'\setminus S_0$, we have $s<d_{\widehat H}(v)\le s'$. If
$s'\le \floor{c/2}$, then ${\widehat H}$ has at least $s'-1$ vertices of degree
at most $s'$, contradicting the
maximality of $s$. Therefore $s'\ge \floor{c/2}+1$. 
It follows that $\lvert S'\rvert\ge \floor{c/2}$.
Since $d_{\widehat H}(u)=c-s'$, we have $d_{\widehat H}(u)\le c-\lfloor c/2\rfloor-1\le \lfloor c/2\rfloor$.
If $w\in S'\cap S_0$, then
$d_{\widehat H}(w)\le s\le \floor{c/2}$. If $w\in S'\setminus S_0$, then the choice
of $u$ implies $d_{\widehat H}(w)\le d_{\widehat H}(u)\le \floor{c/2}$. Hence, every vertex of
$S'$ has degree at most $\floor{c/2}$. Therefore, any $\floor{c/2}-1$
vertices of $S'$ form the desired set $R$ in alternative \textup{(ii)}.
\renewcommand{\qedsymbol}{$\blacksquare$}
\end{proof}

\subsection{The number of edges with at least one endpoint outside $C$}

The next lemma  
bounds the number of edges with at least one endpoint outside a
locally maximal cycle.
For a longest cycle $C$ of length $c$ in
an $n$-vertex graph, 
Bondy~\cite{Bondy1971} proved that (a) the number of edges with at most
one endpoint in $C$ is at most $\frac{c}{2}(n-c)$; (b) if the graph is
$2$-connected, the sharper bound
$\left\lfloor\frac{c}{2}\right\rfloor(n-c)$ holds.
In the following lemma,
we extend (b) from longest
cycles to locally maximal cycles.

\begin{lemma}\label{lem:exterior-edge-bound}
Let $G$ be a $2$-connected graph of order $n$, and let $C$ be a locally
maximal cycle of length $c<n$.
\begin{enumerate}[label=\textup{(\roman*)},nosep]
\item Let $\widehat G=\cl_C(G)$. If there exists a set $S\subseteq V(C)$
with $\lvert S\rvert=s-1$, $2\le s\le\bigl\lfloor\frac{c}{2}\bigr\rfloor-1$,
such that $\widehat G[V(C)]-S$ is a clique, then
$e(G)\le e(G[V(C)])+s(n-c)$.

\item If $c\ge4$, then
$e(G-C)+e_G\bigl(V(G)\setminus V(C),V(C)\bigr)
   \le\left\lfloor\frac{c}{2}\right\rfloor(n-c)$.
\end{enumerate}
\end{lemma}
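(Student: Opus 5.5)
Both parts rest on a neighbourhood count for each component $H$ of $G-C$: if $\lvert N_C(H)\rvert$ is large, then local maximality of $C$ forces a contradiction. The key observation is that a detour through $H$ between two distinct attachment points $x,y\in N_C(H)$ uses exactly two edges of $E_G(V(C),V(G)\setminus V(C))$. Hence no such detour may lengthen $C$.

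\textbf{Part (ii).} I would follow Bondy's argument component by component. Fix a component $H$ of $G-C$ with $h=\lvert V(H)\rvert$. Since $G$ is $2$-connected, $\lvert N_C(H)\rvert\ge2$.

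First, if $x,y\in N_C(H)$ are distinct, then any $(x,H,y)$-path $P$ has length at most the length of each of the two arcs of $C$ between $x$ and $y$. Otherwise, replacing the shorter arc by $P$ gives a longer cycle that uses only two crossing edges. In particular, no two vertices of $N_C(H)$ are consecutive on $C$, so $\lvert N_C(H)\rvert\le\floor{c/2}$.

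Second, I need $e(H)+e_G(H,C)\le\floor{c/2}h$. I would argue by induction on $h$, in the spirit of Bondy and of Fan/Kopylov. Take a vertex $v$ of $H$ that is an endpoint of a longest path in $H$, or more generally one whose removal keeps $H$ connected. Then bound the edges incident to $v$ by $\floor{c/2}$. For this, count the neighbours of $v$ in $C$ together with the length of the longest $(x,H,y)$-paths. The standard bound is that, for distinct $x,y\in N_C(H)$, the segments $xCy$ have length at least the corresponding path length. Summing over consecutive attachment points gives $\sum(\ell_i+1)\le c$, and this yields $d_H(v)+d_C(v)\le\floor{c/2}$ in the form needed.

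Summing over components then gives (ii). The hypothesis $c\ge4$ is used so that $\floor{c/2}\ge2$ covers the components where $h$ is small. I expect this induction to be the \emph{main obstacle}. The difficulty is that local maximality only forbids longer cycles with two crossing edges, so every rerouting must enter and leave $H$ exactly once. That is exactly what the $(x,H,y)$-path argument provides, but the degree bookkeeping for $v$ requires care.

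\textbf{Part (i).} Here we need the sharper bound: each component $H$ of $G-C$ contributes at most $s\,h$ edges to $e(G)-e(G[V(C)])$. Write $K=V(C)\setminus S$, so $\widehat G[K]$ is a clique of order $c-s+1$.

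By Lemma~\ref{lem:closure-preserves-local-maximality}, $C$ is still locally maximal in $\widehat G$, and $\widehat G$ agrees with $G$ outside $V(C)$. In $\widehat G$, take two distinct attachment points $x,y\in N_C(H)\cap K$. Because $K$ is a clique, a Hamiltonian $x$--$y$ path can be formed through the clique part: route from $x$ to $y$ through all of $V(C)$ using the clique edges, absorbing the vertices of $S$ through their cycle neighbours where possible. Appending an $(x,H,y)$-path to this route would then give a cycle longer than $C$ with two crossing edges, a contradiction. Hence $\lvert N_C(H)\cap K\rvert\le1$, and consequently every vertex of $H$ has at most $(s-1)+1=s$ neighbours on $C$.

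To pass from this per-vertex bound to a per-component bound, I would apply the same inductive argument as in part (ii), with $\floor{c/2}$ replaced by $s$. Finally, since $H$ has order $h<n$ and edges inside $H$ are controlled by the same path argument, the bound $e(H)+e_G(H,C)\le sh$ should follow. Summing over components gives $e(G)\le e(G[V(C)])+s(n-c)$.
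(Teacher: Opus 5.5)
\textbf{Part (ii).} Your induction relies on the claim that an endpoint $v$ of a longest path of $H$ (or any $v$ with $H-v$ connected) satisfies $d_H(v)+d_C(v)\le\floor{c/2}$. This is false. Let $C=c_0c_1\cdots c_7c_0$ have no chords. Let $H$ have vertices $v_0,\dots,v_3$ and edges $v_1v_2,v_2v_3,v_0v_1,v_0v_2,v_0v_3$, and add the edges $v_0c_0,v_0c_4,v_2c_0$. This graph is $2$-connected. A longest $v_0$--$v_2$ path in $H$ has length $2$, so every $(x,H,y)$-path has length at most $4$, and $C$ is a longest cycle. Yet $v_0$ is an end of the Hamilton path $v_0v_1v_2v_3$ of $H$, $H-v_0$ is connected, and $d_H(v_0)+d_C(v_0)=5>4=\floor{c/2}$. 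The trouble is that $d_H(v)$ can exceed the length of every detour through $v$, so the spacing of attachment points does not control it. Deleting $v$ may also destroy $2$-connectivity, so your inductive hypothesis is not preserved.

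The paper avoids vertex-by-vertex induction. It applies the Fan--Lv--Wang switching (Lemma~\ref{lem:Fan-Lv-Wang-switch}) repeatedly; this keeps $2$-connectivity, local maximality and $G[V(C)]$, and never decreases the edge count. The process ends with a graph in which, for each component $Q$ of $G-C$, every vertex of $N_C(Q)$ is adjacent to all of $Q$. Then Erd\H{o}s--Gallai gives $e(Q)+e(Q,C)\le\frac{\ell+2q}{2}\lvert Q\rvert$. The $(x_i,Q,x_{i+1})$-paths of length $\ell+2$ force $(\ell+2)q\le c$, that is, $\ell+2q\le 2\floor{c/2}$.

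\textbf{Part (i).} The claim $\lvert N_C(H)\cap K\rvert\le 1$ is false. The Hamiltonian $x$--$y$ path you want in $\widehat G[V(C)]$ need not exist; by Lemma~\ref{lem:Ma-Ning-non-ha}, $\widehat G[V(C)]$ is not Hamiltonian-connected.

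Take $W_{n,s,c}$ with $n>c$ and $2\le s\le\floor{c/2}-1$. It consists of an $s$-set $A$ and a set $B$ with $\lvert B\rvert=c-2s+1$ spanning a clique, plus an independent set $U$ joined completely to $A$. The cycle $a_1u_1a_2u_2\cdots a_{s-1}u_{s-1}a_sb_1\cdots b_{c-2s+1}a_1$ is a longest cycle, and $G[V(C)]$ is already $(c+1)$-closed. The set $\{u_1,\dots,u_{s-1}\}$ plays the role of $S$, with $K=A\cup B$. Every vertex of $U\setminus V(C)$ is a component whose $s\ge 2$ attachment points all lie in $K$, and equality holds in (i).

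Your absorption step fails here. Each $u_i$ needs two path-neighbours in $A$, which uses all $2s-2$ available slots on an $x$--$y$ path with $x,y\in A$, leaving $B$ unreachable. The correct per-vertex bound $d_C(z)\le s$ comes from a different fact: the successors of $N_C(z)$ are independent in $\widehat G[V(C)]$, and $\alpha(\widehat G[V(C)])\le\lvert S\rvert+1=s$.

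For components containing an edge, a per-vertex bound does not suffice anyway. In the switched graph, the paper plays the strong-attachment bound $\omega\le c-(\ell+1)(q-1)$ (Lemma~\ref{lem:Ma-Ning-strong-attachment}) against the clique of order $c-s+1$ to obtain $\ell+2q\le 2s$.
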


To prove this, we will need some tools developed by Ma and Ning~\cite{MaNing2020} and Fan, Lv, and Wang~\cite{FanLvWang2004}.
Let $G$ be a graph, let $C$ be a cycle of $G$, and let $Q$ be a component of $G-C$.
A set $T=\{x_1,\ldots,x_t\}\subseteq V(C)$ with $t\ge2$, listed in cyclic order on $C$,
is a \textit{strong attachment} of $Q$ to $C$ if, for every $i\in\{1,\ldots,t\}$,
where indices are taken modulo $t$, there exist vertices $u_i,v_i\in V(Q)$ such that
$x_i u_i,x_{i+1}v_i\in E(G)$ and 
 $x_i u_i$, $x_{i+1}v_i$ are independent edges.

\begin{lemma}[{\normalfont Ma--Ning~\cite[Lemma~2.5(i)]{MaNing2020}}]\label{lem:Ma-Ning-strong-attachment}
Let $G$ be a $2$-connected graph, let $C$ be a locally maximal cycle of length $c$ in $G$, and let $Q$ be a component of $G-C$. Suppose $T$ is a strong attachment of $Q$ to $C$, $\lvert T\rvert=t$, 
and for every distinct $x,x'\in T$, 
a longest $(x,Q,x')$-path has length at least 
$d\ge2$. Then $\omega(G[V(C)])\le c-(d-1)(t-1)$.
\end{lemma}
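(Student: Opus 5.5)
The plan is to fix a maximum clique $K$ of $G[V(C)]$ and show that at least $(d-1)(t-1)$ vertices of $C$ lie outside $K$. Orient $C$ and write the segments $C_i:=x_iCx_{i+1}$ for $i=1,\dots,t$, indices modulo $t$.

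The first step is that every segment is long. For distinct $x,x'\in T$, an $(x,Q,x')$-path uses exactly two edges of $E_G(V(C),V(G)\setminus V(C))$. If some $C_i$ had length less than $d$, replacing it by a longest $(x_i,Q,x_{i+1})$-path would give a longer cycle using only two such edges. This contradicts local maximality, so each $C_i$ has length at least $d$, i.e.\ at least $d-1$ interior vertices. I expect the strong-attachment hypothesis (independent edges $x_iu_i$, $x_{i+1}v_i$) to be what makes such $(x_i,Q,x_{i+1})$-paths exist, in particular when $u_i=v_i$ would otherwise be forced.

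The second step is the rerouting argument that forbids clique vertices near the attachment points. For each $i$, let $p_i\ge1$ be the least integer with $x_i^{+p_i}\in K$. The search runs over the interior of $C_i$ and the endpoint $x_{i+1}$; if there is no such vertex, the whole interior of $C_i$ is missed by $K$. Define $q_i$ symmetrically using predecessors $x_{i+1}^{-q_i}$ inside $C_i$. For $i\neq j$, put $y=x_i^{+p_i}$, $z=x_j^{+p_j}$ and take a longest $(x_i,Q,x_j)$-path $P$, of length at least $d$. Consider the cycle
\[
x_i\,P\,x_j\;\;\xrightarrow{\ C^-\ }\;\; y\;\to\; z\;\;\xrightarrow{\ C^+\ }\;\; x_i .
\]
It runs backwards along $C$ from $x_j$ to $y$, uses the clique edge $yz$, then runs forwards along $C$ from $z$ back to $x_i$. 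Its length is $c-p_i-p_j+1+|E(P)|$. Local maximality therefore forces $p_i+p_j\ge d+1$ for all $i\ne j$, and the mirror argument gives $q_i+q_j\ge d+1$. Hence at most one $p_i$ is smaller than $(d+1)/2$, and likewise for the $q_i$.

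The third step is counting. The segment $C_i$ contributes at least $(p_i-1)+(q_i-1)$ vertices outside $K$ (interiors of different segments are disjoint). When $p_i$ and $q_i$ overlap, or $K$ misses the segment entirely, the contribution is instead at least $d-1$, which is what we need anyway. Summing, and discarding the single exceptional $p$ and the single exceptional $q$, each of the remaining $t-1$ indices contributes about $d-1$. This gives $|V(C)\setminus K|\ge (d-1)(t-1)$, i.e.\ $\omega(G[V(C)])\le c-(d-1)(t-1)$.

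The main obstacle is this last bookkeeping. One must handle the parity in $p_i+p_j\ge d+1$, the case where the exceptional $p$ and exceptional $q$ sit in different segments, and overlapping windows. If the pairwise-sum bound is too weak, I would first try mixing the two directions: pair $y=x_i^{+p_i}$ with $z=x_j^{-q_{j-1}}$ (the predecessor window of $x_j$ in $C_{j-1}$) and reroute accordingly. The aim is to get $p_i+q_{j-1}\ge d+1$ as well, so that all but one segment lose $d-1$ vertices outright.
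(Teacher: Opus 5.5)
The paper does not prove this lemma; it quotes it from Ma--Ning. So there is no in-paper argument to compare against. Judged on its own, your argument works, and the bookkeeping you call the main obstacle closes in a few lines.

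Steps~1 and~2 are correct as written. Each $C_i$ has at least $d-1$ interior vertices. The crossing reroute through the chord $yz$ has length $c-p_i-p_j+1+|E(P)|$ and uses exactly two edges between $C$ and $G-C$, because $d\ge2$ forces $P$ to have an interior vertex. Hence $p_i+p_j\ge d+1$, and symmetrically $q_i+q_j\ge d+1$. Your argument never uses the strong-attachment condition: the hypothesis already supplies the $(x,Q,x')$-paths of length at least $d$. So your guess about its role is not needed.

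\textbf{Finishing the count.} Do not discard whole segments. Let $B$ be the set of indices $i$ for which $K$ meets the interior of $C_i$, and let $b=|B|$.
\begin{itemize}
\item For $i\notin B$, all of the at least $d-1$ interior vertices of $C_i$ miss $K$.
\item For $i\in B$, you only need $p_i,q_i$ pointing into the interior. The two windows $\{x_i^{+1},\dots,x_i^{+(p_i-1)}\}$ and $\{x_{i+1}^{-1},\dots,x_{i+1}^{-(q_i-1)}\}$ are disjoint and miss $K$.
\end{itemize}
Among $\{p_i:i\in B\}$, all but the smallest are at least $(d+1)/2$, and the same holds for the $q_i$. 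Discard one $p$-term and one $q$-term, wherever they sit:
\[
\sum_{i\in B}\bigl[(p_i-1)+(q_i-1)\bigr]\ \ge\ 2(b-1)\cdot\frac{d-1}{2}=(b-1)(d-1).
\]
Hence $|V(C)\setminus K|\ge (t-b)(d-1)+(b-1)(d-1)=(t-1)(d-1)$ when $b\ge1$, and $|V(C)\setminus K|\ge t(d-1)$ when $b=0$. Parity plays no role, since you only use the real bound $(d-1)/2$ per term. The case where the exceptional $p$ and $q$ lie in different segments is covered automatically, because you drop terms rather than segments.

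\textbf{Drop the mixed-direction fallback.} Take $y=x_i^{+p_i}$ and $z=x_j^{-q_{j-1}}$, both on the arc $x_iCx_j$. A cycle through $P$ that avoids both windows must leave $x_i$ and $x_j$ along the opposite arc $x_jCx_i$. That arc closes up with $P$ before the chord $yz$ can ever be used. So this reroute produces no cycle, and you do not need it.
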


Let $G$ be a graph and let $xy\in E(G)$. Suppose
$A\subseteq N_G(y)\setminus(N_G(x)\cup\{x\})$.
We define the edge switch from $y$ to $x$ over $A$ as follows. 
Let $G[y\to x;A]$ be obtained from $G$ by replacing $yz$ with $xz$ for every $z\in A$. Thus, the operation transfers the adjacencies of $y$ with the vertices in $A$ to $x$, while keeping the number of edges unchanged.

\begin{lemma}[{\normalfont Fan--Lv--Wang~%
\cite[Lemma~2.4]{FanLvWang2004}}]
\label{lem:Fan-Lv-Wang-switch}
Let $G$ be a $2$-connected graph, let $C$ be a locally maximal cycle
in $G$, and let $Q$ be a component of $G-C$. Then one of the following
holds.
\begin{enumerate}[label=\textup{(\roman*)},nosep]
\item
$N_Q(x)=V(Q)$ for every $x\in N_C(Q)$.
\item
There exist $x\in N_C(Q)$, $y\in N_Q(x)$, and a nonempty set
$A\subseteq N_Q(y)\setminus N_Q(x)$ such that, with
$G_0=G[y\to x;A]$, one of the following holds:
\begin{enumerate}[label=\textup{(\alph*)},nosep]
\item
$G_0$ is $2$-connected. In this case, set $G'=G_0$.
\item
$G_0$ is not $2$-connected, and there exists
$x'\in N_C(Q)\setminus\{x\}$ such that $G_0+yx'$ is $2$-connected.
In this case, set $G'=G_0+yx'$.
\end{enumerate}
\end{enumerate}
If alternative~\textup{(ii)} occurs, then $C$ remains locally maximal
in $G'$, $G'[V(C)]=G[V(C)]$, and $e(G')\ge e(G)$.
\end{lemma}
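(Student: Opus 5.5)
The plan is to prove the dichotomy directly. Either every attachment vertex of $Q$ sees all of $Q$, which is alternative (i), or we can find a switch that keeps $C$ locally maximal and then repair $2$-connectivity by adding one edge.

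\textbf{Step 1 (choosing the switch).} Assume (i) fails, so some $x\in N_C(Q)$ has $N_Q(x)\ne V(Q)$. Since $Q$ is connected and $N_Q(x)\neq\emptyset$, some vertex of $N_Q(x)$ has a neighbour in $V(Q)\setminus N_Q(x)$. Among all such pairs, I would choose $x$ and $y\in N_Q(x)$ carefully, for instance with $d_Q(y)$ or $\lvert N_Q(x)\rvert$ extremal, and set $A:=N_Q(y)\setminus N_Q(x)\neq\emptyset$. Then $G_0=G[y\to x;A]$ is well defined.

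\textbf{Step 2 (the easy invariants).} The switch only deletes edges $yz$ and adds edges $xz$ with $z\in V(Q)$, so $G_0[V(C)]=G[V(C)]$ and $e(G_0)=e(G)$. In case (b), adding $yx'$ with $x'\in V(C)$ also leaves $G[V(C)]$ unchanged and only increases the edge count. This gives $G'[V(C)]=G[V(C)]$ and $e(G')\ge e(G)$.

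\textbf{Step 3 (connectivity).} A cut vertex of $G_0$ can only arise from separating the vertices of $A\cup\{y\}$ from the rest. Since $xy$ survives and $x$ now sees $A$, any cut vertex must be $x$. The piece it cuts off contains $y$ and some of $Q$, but no vertex of $C$ other than $x$. Because $G$ is $2$-connected, $\lvert N_C(Q)\rvert\ge2$, so some $x'\in N_C(Q)\setminus\{x\}$ exists. Adding $yx'$ then gives a second route from the cut-off piece to $C$, so $G_0+yx'$ is $2$-connected. This is case (b); otherwise we are in case (a).

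\textbf{Step 4 (local maximality, the main obstacle).} Suppose $G'$ contains a cycle $C'$ longer than $C$ that uses at most two edges of $E_{G'}(V(C),V(G')\setminus V(C))$. Then $C'$ consists of a segment of $C$-type material together with one path $P$ through $G'-C$, with both ends on $C$. I must convert $P$ into a path in $G$ between the same ends, of at least the same length and still meeting $V(C)$ only at its ends. Then the corresponding cycle in $G$ would contradict the local maximality of $C$ in $G$.

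If $P$ uses a new edge $xz$ with $z\in A$ and avoids $y$, replace $xz$ by $xyz$. If $P$ passes through $y$ as well, the plan is a Pósa-type rotation using the edges $yz$ and $xy$ of $G$, and this is where the extremal choice of $(x,y)$ in Step 1 must be used, to guarantee that the rerouted path is not shorter. The added edge $yx'$ in case (b) is handled similarly: replace it by $y\,x$ followed by the original attachment route, or use a path in $Q$ to a neighbour of $x'$.

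I expect this rerouting, and getting the choice in Step 1 right so that it always works, to be the crux. The rest is bookkeeping; failing a self-contained argument, Steps 1 and 4 are the content of the cited Fan--Lv--Wang lemma.
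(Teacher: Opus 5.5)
\textbf{There is a genuine gap.} The paper gives no proof of this lemma; it is imported from Fan--Lv--Wang. Your sketch therefore has to stand on its own, and it does not: Steps~1 and~4 are deferred to the citation. Step~2 is fine.

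\textbf{Step~3 is incorrect as written.} You are right that only $x$ can be a cut vertex of $G_0$: for $w\notin\{x,y\}$ an edge $yz$ with $z\in A$ can be replaced by $y\,x\,z$, and $G_0-y\supseteq G-y$. But the piece cut off by $x$ need not contain $y$. A component $D$ of $G_0-x$ avoiding $V(C)\cup\{y\}$ has $N_G(D)=\{x,y\}$, and no edge at $y$ reconnects it. Here is an example.
Let $C=v_0v_1\cdots v_7v_0$, let $Q$ be the path $yzu$, and put $N_C(y)=\{v_0,v_4\}$, $N_C(u)=\{v_0\}$, $N_C(z)=\emptyset$.
Then $G$ is $2$-connected, $C$ is a longest cycle, and (i) fails at $v_0$.
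Take $x=v_0$, keep $y$, and let $A=N_Q(y)\setminus N_Q(v_0)=\{z\}$; your Step~3 argument applies verbatim to this choice.
Yet $G_0-v_0$ has the component $\{z,u\}$, and the only candidate $x'=v_4$ is already adjacent to $y$, so neither (a) nor (b) holds.
Only other choices work: $(v_4,y,\{z\})$ gives (a), and $(v_0,u,\{z\})$ gives (b).

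So the choice of $(x,y)$ matters already for $2$-connectivity. You need to show that one can pick $(x,y)$, with $A=N_Q(y)\setminus N_Q(x)$, so that every component of $G_0-x$ disjoint from $V(C)$ contains $y$. One way is descent:
\begin{itemize}
\item A bad component $D$ for $(x,y)$ is connected and meets both $N_Q(x)$ and $A$. Hence it contains an edge $y'z'$ with $y'\in N_Q(x)$ and $z'\notin N_Q(x)$.
\item Every bad component for $(x,y')$ lies in $D-y'$. If it contained $y$, its union with $D$ would have neighbourhood $\{x\}$ in $G$. Otherwise it avoids $N_G(D)=\{x,y\}$ and contains a neighbour of $y'\in D$.
\item Hence a pair minimizing the order of a bad component has none.
\end{itemize}
With such a choice, if $G_0$ is not $2$-connected, then the unique piece $D$ cut off by $x$ contains $y$. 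In that case $G_0+yx'$ is $2$-connected for every $x'\in N_C(Q)\setminus\{x\}$.

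\textbf{In Step~4 you locate the difficulty in the wrong place.} Rerouting the edges $xz$ needs no extremal choice, only that $A$ is all of $N_Q(y)\setminus N_Q(x)$. Suppose $P=x\,z\,R\,w\,y\,S\,b$ avoids $yx'$. Then $w$ is a $G'$-neighbour of $y$ in $Q$, hence $w\in N_Q(x)\cap N_Q(y)$. So $x\,w\,R^{-1}\,z\,y\,S\,b$ is a path of $G$ of the same length; with a proper subset $A$ this can fail.

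You must also treat longer cycles meeting $V(C)$ only in $x$, which your description of $C'$ omits. They are handled the same way: for instance, $x\,z\,R\,z'\,x$ with $z,z'\in A$ and $y\notin R$ corresponds to the cycle $y\,z\,R\,z'\,y$ of $G-C$.

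\textbf{The edge $yx'$ is what actually needs an argument, and neither of your suggestions settles it.} Replacing $yx'$ by ``$yx$ plus the original attachment route'' changes an endpoint of $P$. The working argument uses the structure from Step~3.
\begin{itemize}
\item Since $D\ni y$ is the only piece cut off, $N_C(D)=\{x\}$ in $G$. Hence any such $P$ has the form $x'\,y\cdots x$ with interior in $D$.
\item Take a neighbour $q'\in V(Q)$ of $x'$; then $q'\notin D$.
\item A shortest path in $Q$ from $q'$ to $D$ ends with an edge $z_0y$, $z_0\in A\setminus D$.
\item Using this path in place of the edge $x'y$ gives a strictly longer path of $G$ with the same ends.
\end{itemize}
The case where $P$ ends with a new edge $zx$ is handled by combining this rerouting with the rotation above.
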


\begin{lemma}[Erd\H{o}s--Gallai~\cite{ErdosGallai1959}]
\label{lem:EG-path}
Let $G$ be a graph on $n$ vertices whose longest path has length at most
$\ell$. Then $e(G)\le {\ell n}/{2}$.
\end{lemma}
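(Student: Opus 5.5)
We argue by induction on $n$, using the standard minimum-degree reduction together with the classical observation that a longest path whose endpoints have large degree closes into a cycle. The base case $n\le \ell+1$ is immediate: $e(G)\le\binom n2=\frac{n(n-1)}{2}\le \frac{\ell n}{2}$. Assume $n\ge \ell+2$. The statement is trivial for $\ell=0$, so assume $\ell\ge1$.

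\textbf{Step 1: reductions.} If $G$ is disconnected, each component has longest path of length at most $\ell$, so we apply the induction hypothesis to each component and sum. Hence we may assume $G$ is connected. If $G$ has a vertex $v$ with $d_G(v)\le \ell/2$, then $G-v$ again has no path longer than $\ell$. By induction $e(G)\le \frac{\ell(n-1)}{2}+\frac{\ell}{2}=\frac{\ell n}{2}$. Thus we may assume that $G$ is connected and $\delta(G)>\ell/2$.

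\textbf{Step 2: closing a longest path into a cycle.} Let $P=v_0v_1\cdots v_m$ be a longest path in $G$, so $m\le \ell$. By maximality of $P$, every neighbor of $v_0$ and of $v_m$ lies on $P$. Moreover $d(v_0)+d(v_m)>\ell\ge m$. Consider the index sets
$$A=\{i: v_0v_{i+1}\in E(G)\}\quad\text{and}\quad B=\{i: v_iv_m\in E(G)\},$$
both contained in $\{0,\dots,m-1\}$. Since $|A|+|B|>m$, they intersect. If $v_0v_m$ is an edge we already have a cycle through $V(P)$. Otherwise, take $i\in A\cap B$; then
$$v_0v_{i+1}Pv_mv_iPv_0$$
is a cycle $C$ with $V(C)=V(P)$ and $m+1$ vertices.

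\textbf{Step 3: contradiction.} Since $n\ge \ell+2>m+1$ and $G$ is connected, some vertex $w\notin V(C)$ is adjacent to some vertex $v_j\in V(C)$. Starting at $w$, going to $v_j$, and then traversing $C$ from $v_j$ yields a path with $m+1$ edges, contradicting the maximality of $P$. Hence this case cannot occur, which completes the induction.

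The only nontrivial step is Step 2, the pigeonhole argument producing the cycle. It requires $d(v_0)+d(v_m)\ge m+1$, which is exactly what the minimum-degree reduction $\delta(G)>\ell/2$ provides.
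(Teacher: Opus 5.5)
Your proof is correct and complete.

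The paper gives no proof of this lemma. It quotes Erd\H{o}s and Gallai and uses the bound as a black box in the proof of Lemma~\ref{lem:exterior-edge-bound}. Your route is the standard self-contained induction:
\begin{itemize}
\item delete a vertex of degree at most $\ell/2$ if one exists;
\item otherwise the endpoints of a longest path $P=v_0\cdots v_m$ have degree sum at least $m+1$, so $V(P)$ spans a cycle (Ore's closing argument);
\item connectivity together with $n>m+1$ then extends that cycle to a path longer than $P$.
\end{itemize}
The small cases you leave implicit do hold. Connectivity and $n\ge 3$ give $m\ge 2$, so $v_0Pv_mv_0$ is a genuine cycle when $v_0v_m\in E(G)$. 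When $v_0v_m\notin E(G)$, every $i\in A\cap B$ satisfies $1\le i\le m-2$.

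The classical derivation goes differently, through the Erd\H{o}s--Gallai circumference theorem $e(G)\le c(G)(n-1)/2$. Join a new vertex to all of $V(G)$. The resulting graph on $n+1$ vertices has circumference at most $\ell+2$, so $e(G)+n\le (\ell+2)n/2$, which gives the claim.

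That reduction is one line but relies on the harder cycle theorem. Your induction proves the path bound directly from elementary degree counting, which is all the paper needs.
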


We now prove Lemma~\ref{lem:exterior-edge-bound}.
The framework used in the proof is adapted from
the proof of Lemma~4.3 in Ma and Ning~\cite{MaNing2020}. More precisely,
following their argument, we repeatedly apply the edge-switching lemma
of Fan, Lv, and Wang~\cite[Lemma~2.4]{FanLvWang2004} to obtain a
terminal graph in which, for every component $Q$ outside $C$, each
vertex of $N_C(Q)$ is adjacent to every vertex of $Q$. We then combine
the Erd\H{o}s--Gallai path bound with the local maximality of $C$.

\begin{proof}[\bf Proof of Lemma~\ref{lem:exterior-edge-bound}]
We first prove the following claim about repeated edge switching:

\begin{claim}\label{claim:switching-terminal}
Let $F$ be a $2$-connected graph in which $C$ is a locally maximal cycle of length $c<n$. 
We apply Lemma~\ref{lem:Fan-Lv-Wang-switch}(ii) repeatedly, starting from $F$, to obtain a graph $\widetilde F$. This graph has the following properties:
\begin{enumerate}[label=\textup{(\alph*)},nosep]
\item $e(\widetilde F)\ge e(F)$, $\widetilde F[V(C)]=F[V(C)]$, and $C$ is locally maximal in $\widetilde F$;
\item every component $Q$ of $\widetilde F-C$ satisfies $N_Q(x)=V(Q)$ for every $x\in N_C(Q)$;
\item let $Q_1,\dots,Q_m$ be the components of $\widetilde F-C$,
let $q_i=\lvert N_C(Q_i)\rvert$ and let $\ell_i$ be the maximum length of a path in $Q_i$.
Choose $Q=Q_j$ so that
$\ell_j+2q_j=\max_i(\ell_i+2q_i)$. Let
$\ell=\ell_j$ and $q=q_j$.
Then we have
$e(\widetilde F)\le e(\widetilde F[V(C)])+\frac{\ell+2q}{2}(n-c)$.
\end{enumerate}
\end{claim}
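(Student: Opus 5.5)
The plan is to handle the three parts in order: first show that the switching process terminates, then read off (a) and (b) from the terminal state, and finally obtain (c) by summing over components with the Erd\H{o}s--Gallai bound.

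\textbf{Termination.} I would use the potential $\Psi(F):=e_F\bigl(V(C),V(F)\setminus V(C)\bigr)$. Consider one application of Lemma~\ref{lem:Fan-Lv-Wang-switch}(ii) with $x\in N_C(Q)$, $y\in N_Q(x)$ and $\emptyset\ne A\subseteq N_Q(y)\setminus N_Q(x)$. For each $z\in A$ we delete the edge $yz$, which lies inside $F-C$, and add the edge $xz$. This edge goes between $C$ and $Q$ and was absent before, since $z\notin N_Q(x)$. In case (a) the potential $\Psi$ therefore rises by $|A|\ge1$. In case (b) it rises by $|A|+1$, because the extra edge $yx'$ is also a $C$--outside edge. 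Since $\Psi\le c(n-c)$ and $\Psi$ strictly increases at each step, only finitely many switches are possible. This holds even though the components of $F-C$ may split along the way, since the potential does not depend on the component structure.

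\textbf{Parts (a) and (b).} The hypotheses of Lemma~\ref{lem:Fan-Lv-Wang-switch} are 2-connectedness and local maximality of $C$. The lemma guarantees that the new graph $G'$ is 2-connected, that $C$ remains locally maximal in it, that $G'[V(C)]=G[V(C)]$, and that $e(G')\ge e(G)$. So by induction on the number of switches, these hypotheses hold for every intermediate graph and the lemma may be applied again; this gives (a). At the terminal graph $\widetilde F$, alternative (ii) is unavailable for every component $Q$ of $\widetilde F-C$. Since the lemma asserts that (i) or (ii) holds, alternative (i) must hold for every component, which is exactly (b).

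\textbf{Part (c).} Every edge of $\widetilde F$ either lies in $\widetilde F[V(C)]$ or lies in $Q_i$, or joins $Q_i$ to $C$, for a unique $i$; there are no edges between different components. By (b), each vertex of $N_C(Q_i)$ is adjacent to all of $Q_i$, so $e(Q_i,C)=q_i\lvert V(Q_i)\rvert$. By Lemma~\ref{lem:EG-path}, $e(Q_i)\le \ell_i\lvert V(Q_i)\rvert/2$; this is trivially true also when $\lvert V(Q_i)\rvert=1$ and $\ell_i=0$. Summing, and using $\ell_i+2q_i\le \ell+2q$ together with $\sum_i\lvert V(Q_i)\rvert=n-c$, gives
\[
e(\widetilde F)\le e(\widetilde F[V(C)])+\sum_i\frac{\ell_i+2q_i}{2}\lvert V(Q_i)\rvert\le e(\widetilde F[V(C)])+\frac{\ell+2q}{2}(n-c).
\]

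The only step needing genuine care is termination. Case (a) of the switching lemma does not change the edge count, so $e(F)$ itself is not a usable potential. The crossing-edge count $\Psi$ resolves this, because every switched edge lands between $C$ and the outside.
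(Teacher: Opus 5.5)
Your proposal is correct and follows the paper's proof essentially step for step: it uses the same iteration of the Fan--Lv--Wang switch, reads (a) and (b) off the terminal graph in the same way, and obtains (c) by the same Erd\H{o}s--Gallai summation over components. The only difference is the termination potential. The paper notes that $e(F_j-C)$ drops by $|A|\ge 1$ at each switch, while you track the crossing-edge count, which rises by at least $|A|$; this is the same bookkeeping seen from the other side.
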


\begin{proof}
Let $F_0=F$, and for $j\ge 1$, let $F_j$ be the graph obtained from $F_{j-1}$ by one application of Lemma~\ref{lem:Fan-Lv-Wang-switch}(ii). If every component
of $F_j-C$ satisfies alternative \textup{(i)} of
Lemma~\ref{lem:Fan-Lv-Wang-switch}, then we stop. Otherwise, choose a
component $Q$ of $F_j-C$ for which alternative \textup{(i)} fails.
By Lemma~\ref{lem:Fan-Lv-Wang-switch}, alternative \textup{(ii)} applies.
Denote the resulting graph by $F_{j+1}$. Since the switched set is
nonempty, $e(F_{j+1}-C)<e(F_j-C)$. Hence the process terminates. Let
$\widetilde F$ be the terminal graph. Then
$e(\widetilde F)\ge e(F),\; \widetilde F[V(C)]=F[V(C)]$,
$C$ is locally maximal in $\widetilde F$, and
every component $Q$ of $\widetilde F-C$ satisfies
$N_Q(x)=V(Q)$ for every $x\in N_C(Q)$.
Thus \textup{(a)} and \textup{(b)} hold.

For each component $Q_i$ of $\widetilde F-C$, let $q_i = \lvert N_C(Q_i)\rvert$ and let $\ell_i$ be the maximum length of a path in $Q_i$.
By (b), every vertex of $N_C(Q_i)$ is adjacent to all vertices of $Q_i$. Hence
$e_{\widetilde F}(V(Q_i),V(C))=q_i\lvert Q_i\rvert$.
Since the subgraph $Q_i$ has no path longer than $\ell_i$, Lemma~\ref{lem:EG-path} gives
$e(\widetilde F[Q_i])\le \frac{\ell_i\lvert Q_i\rvert}{2}$.
Thus, 
$e(\widetilde F[Q_i])+e_{\widetilde F}(V(Q_i),V(C))
\le \left(\frac{\ell_i}{2}+q_i\right)\lvert Q_i\rvert
=\frac{\ell_i+2q_i}{2}\lvert Q_i\rvert$.
Now 
choose a component $Q_j$ maximizing $\ell_j+2q_j$, and let $\ell=\ell_j,\; q=q_j$.
Summing over all components, we obtain
$$
\sum_i \bigl(e(\widetilde F[Q_i])+e_{\widetilde F}(V(Q_i),V(C))\bigr)
\le \sum_i\frac{\ell_i+2q_i}{2}\lvert Q_i\rvert
\le \frac{\ell+2q}{2}\sum_i\lvert Q_i\rvert
=\frac{\ell+2q}{2}(n-c).
$$
Since the graph $\widetilde F$ consists of the subgraph inside $C$, the components outside $C$, and the edges joining them to $C$, we have
$$
e(\widetilde F)\le e(\widetilde F[V(C)])+\frac{\ell+2q}{2}(n-c).
$$
Thus (c) holds.
\end{proof}

\medskip
\textup{(i)}
Let
$\widehat G=\cl_C(G)$.
Since adding edges preserves 2-connectivity,
$\widehat G$ is $2$-connected.
By Lemma~\ref{lem:closure-preserves-local-maximality},
 $C$ is locally maximal in
$\widehat G$. 
Then $e(G)-e(G[V(C)])=e(\widehat G)-e(\widehat G[V(C)])$.
Apply Claim~\ref{claim:switching-terminal} with
$F=\widehat G$, and let $\widetilde F$ be the resulting terminal graph.
Then,
$e(\widehat G)\le e(\widetilde F)$ and $\widetilde F[V(C)]=\widehat G[V(C)]$.
Let $Q$ be the component given in Claim~\ref{claim:switching-terminal},
let $q=\lvert N_C(Q)\rvert$,
and let $\ell$ be the maximum length of a path in $Q$.
Thus, $e(\widehat G)-e(\widehat G[V(C)])\le e(\widetilde F)-e(\widetilde F[V(C)])\le \frac{\ell+2q}{2}(n-c)$.
It remains to show
$\ell+2q\le 2s$. 

First, suppose that $\ell=0$. Then $Q$ consists of a single vertex, say
$Q=\{z\}$. Fix an orientation of $C$ and set
$U=\{x^+:x\in N_C(Q)\}$.
Since the successor map on $C$ is injective,
$\lvert U\rvert=q$.
We claim that $U$ is an independent set in $\widetilde F[V(C)]$.

Suppose to the contrary that $x^+,y^+\in U$ are distinct and
$x^+y^+\in E(\widetilde F)$
for distinct $x,y\in N_C(Q)$. If $x^+=y$ or $y^+=x$, then replacing $xy$ of $C$ by $xzy$ gives a cycle of length
$c+1$, contradicting the local maximality of $C$. Otherwise, deleting
 $xx^+$ and $yy^+$ from $C$ and adding
$xz,\; zy,\; x^+y^+$
gives a cycle of length $c+1$, again contradicting the local
maximality of $C$. This proves that $U$ is independent.

By hypothesis, $\widehat G[V(C)]-S$
is a clique and $\lvert S\rvert=s-1$. Consequently, every independent set in
$\widetilde F[V(C)]$ contains at most one vertex outside $S$.
Hence every independent set
has size at most
$\lvert S\rvert+1=s$.
Therefore
$q=\lvert U\rvert\le s$.
Thus,
$\ell+2q=2q\le2s$.

Now suppose that $\ell\ge1$. 
Suppose to the contrary that $\ell +2q\ge 2s$.
Let
$P=uPv$
be a longest path of length $\ell$.
Since $\widetilde F$ is $2$-connected, $Q$ has at least two distinct
neighbors on $C$.
Hence $q\ge2$.
By Claim~\ref{claim:switching-terminal}\textup{(b)}, every vertex of
$N_C(Q)$ is adjacent to every vertex of $Q$. Thus, for any two distinct
vertices $x,x'\in N_C(Q)$,
$xuPv x'$
is an $(x,Q,x')$-path of length $\ell+2$. Since $x\ne x'$ and $u\ne v$,
$xu$ and $x'v$ are vertex-disjoint. Hence
$N_C(Q)$ is a strong attachment of $Q$ to $C$.
By Lemma~\ref{lem:Ma-Ning-strong-attachment},
$\omega(\widetilde F[V(C)])\le c-(\ell+1)(q-1)$.
Since $(\ell+1)(q-1)-(\ell+2q-3)=(\ell-1)(q-2)\ge0$, we have
$(\ell+1)(q-1)\ge\ell+2q-3$. If $s=2$, then
$(\ell+1)(q-1)\ge2=s$. If $s\ge3$, then
$(\ell+1)(q-1)\ge2s-3\ge s$.
It follows that
$\omega(\widetilde F[V(C)])\le c-s$.
By hypothesis, $\widehat G[V(C)]-S$
is a clique of order $c-s+1$.
Thus, $\omega(\widetilde F[V(C)])\ge c-s+1$,
contradicting $\omega(\widetilde F[V(C)])\le c-s$.
Hence
$\ell+2q\le2s$.
This proves \textup{(i)}.

\medskip
\textup{(ii)}
Apply Claim~\ref{claim:switching-terminal} with $F=G$, and let
$\widetilde G$ be the resulting terminal graph. Then
$e(\widetilde G)\ge e(G),\; \widetilde G[V(C)]=G[V(C)]$.
It follows that
$e(G-C)+e_G(G-C,C)
\le
e(\widetilde G-C)+e_{\widetilde G}(\widetilde G-C,C)$.

Let $Q$ be the component given in
Claim~\ref{claim:switching-terminal}, let $q=\lvert N_C(Q)\rvert$,
and let $\ell$ be the maximum length of a path in $Q$. Since
$\widetilde G$ is $2$-connected,
$q\ge2$.
Set
$N_C(Q)=\{x_1,\dots,x_q\}$
in cyclic order on $C$, with indices taken modulo $q$.
Let
$C_i=x_iCx_{i+1}$ and let $P$ be a longest path in $Q$. By Claim~\ref{claim:switching-terminal}\textup{(b)},
every vertex of $N_C(Q)$ is adjacent to every vertex of $Q$. If
$\ell=0$, the unique vertex of $P$ gives an
$(x_i,Q,x_{i+1})$-path of length $2$. If $\ell\ge1$, adding $x_i$
and $x_{i+1}$ to the two ends of $P$ gives such a path of length
$\ell+2$. Thus, in either case, there is an
$(x_i,Q,x_{i+1})$-path of length $\ell+2$.
If
$\lvert E(C_i)\rvert<\ell+2$
for some $i$, replacing $C_i$ by
the $(x_i,Q,x_{i+1})$-path of length $\ell +2$ will
give a cycle longer than $C$. The new cycle uses exactly two edges
between $C$ and $\widetilde G-C$, contradicting the local maximality of
$C$. Therefore
$\lvert E(C_i)\rvert\ge\ell+2$ for every $i$.
Summing over $i$ yields
$(\ell+2)q\le c$.
Thus,
$\ell+2q\le2\left\lfloor\frac c2\right\rfloor$.
By Claim~\ref{claim:switching-terminal}\textup{(c)},
$$
e(\widetilde G-C)+e_{\widetilde G}(V(\widetilde G)\setminus V(C),V(C))
= e(\widetilde G)-e(\widetilde G[V(C)])
\le\frac{\ell+2q}{2}(n-c)
\le\left\lfloor\frac c2\right\rfloor(n-c).
$$
This proves \textup{(ii)}.
\renewcommand{\qedsymbol}{$\blacksquare$}
\end{proof}

\section{Proof of Theorem~\ref{thm:main}}\label{sec:proof}

It suffices to prove the following two
lemmas.

\begin{lemma}\label{lem:base}
Let $k\ge2$ and $k+2\le n\le \ceil{{(5k+1)}/{2}}$. Then every
$n$-vertex graph containing no $(k+2)$-path-fan has at most $F_k(n)$ edges.
\end{lemma}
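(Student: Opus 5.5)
We argue by contradiction and take a counterexample $G$ with $n$ as small as possible (for fixed $k$), with $k+2\le n\le\ceil{(5k+1)/2}$ and $e(G)>F_k(n)$. By Lemma~\ref{lem:path-fan-equivalence} it suffices to rule out $(k+2)$-path-fans. The first step is to reduce to $2$-connected non-Hamiltonian graphs.

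\textbf{Connectivity.} If $G$ is disconnected, or has a cut vertex, we write $G$ as a union of two graphs on $x$ and $y$ vertices with $x+y\le n$ (respectively $x+y-1=n$ with one shared vertex). Each part has no $(k+2)$-path-fan. Each part is either small ($\le k+1$ vertices, so at most $\binom{x}{2}=F_k(x)$ edges) or covered by minimality, using monotonicity of $F_k$ for smaller orders. When $J_k(n)>R_k(n)$, Lemma~\ref{lem:merge-ineq} then gives $e(G)\le F_k(x)+F_k(y)\le F_k(n)$. When $J_k(n)\le R_k(n)$ we instead use a degree argument. If $\Delta(G)\le k+1$, then $e(G)\le R_k(n)$ directly. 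Otherwise we bound the parts by $R_k$-type estimates, which needs a small separate check.

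\textbf{Hamiltonicity.} If $G$ is Hamiltonian with Hamilton cycle $C$, then any vertex $x$ of degree $\ge k+2$ gives a $(k+2)$-path-fan with the path $C-x$. Hence $\Delta(G)\le k+1$ and $e(G)\le R_k(n)\le F_k(n)$. So $G$ is $2$-connected and non-Hamiltonian. Let $C$ be a longest cycle of length $c<n$. By Theorem~\ref{thm:Dirac-cycle} we have $c\ge2\delta(G)$.

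\textbf{Bounding edges inside $C$.} Every vertex on $C$ has at most $k+1$ neighbours on $C$, using the path $C-v$. In the $C$-closure $\widehat H$ degrees only grow, but the edge count of $G[V(C)]$ is at most $e(\widehat H)$, and degrees in $G[V(C)]$ remain $\le k+1$. We apply Lemma~\ref{lem:Ma-Ning-cycle-dichotomy}.
\begin{itemize}
\item In case (i), the $s-1$ vertices of $S$ have degree $\le s$ in $\widehat H$, and the rest have degree $\le k+1$ in $G$. Hence $e(G[V(C)])\le\floor{((s-1)s+(c-s+1)(k+1))/2}$. Lemma~\ref{lem:exterior-edge-bound}(i) adds $s(n-c)$ edges, and Lemma~\ref{lem:cycle-arithmetic-consequences}(i) finishes this case.
\item In case (ii), the $\floor{c/2}-1$ vertices of $R$ have $G$-degree inside $C$ at most $\min\{\floor{c/2},k+1\}$. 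If the outside part satisfies $e(G-C)+e_G(G-C,C)\le(\floor{c/2}-1)(n-c)$, then Lemma~\ref{lem:cycle-arithmetic-consequences}(ii) finishes.
\end{itemize}

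\textbf{The remaining outside-edge case.} Here the outside part exceeds $(\floor{c/2}-1)(n-c)$. We split according to $c$.
\begin{itemize}
\item For $c\le9$, we use Lemma~\ref{lem:exterior-edge-bound}(ii) (the bound $\floor{c/2}(n-c)$) together with Lemma~\ref{lem:small-c-arithmetic}. This needs $c\ge2\floor{(k+1)/2}+2$, which should follow from a vertex of degree $\ge k+2$ and Dirac applied to a suitable subgraph. The exceptional triple $(3,8,6)$ has to be killed by hand: a near-extremal $8$-vertex graph there must contain a $5$-path-fan.
\item For $c\ge10$, Theorem~\ref{thm:MN-exterior-stability} and Lemma~\ref{lem:exceptional-low-degree-or-W} give either $G\subseteq W_{n,\floor{c/2},c}$ or a vertex of degree $\le3$. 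In the latter case we delete that vertex: minimality gives $e(G)\le F_k(n-1)+3$, and we need $F_k(n)-F_k(n-1)\ge3$, which holds for $k\ge5$ when $c\ge10$ forces $k$ large.
\item If $G\subseteq W_{n,\floor{c/2},c}$, the clique $K_{c-\floor{c/2}+1}$ contains a Hamilton path through any vertex. So every clique vertex of $T$ has at most $k+1$ neighbours in the clique. Vertices of $S$ see the $U$-vertices as well, and a path alternating through $U$ and $S$ covers $\min(|U|,|S|-1)$ neighbours. This forces $G$ to look like Construction~\ref{cons:split-construction} with $a=\floor{c/2}$, restricted by the path-fan constraint. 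Counting then yields $a(n-a)+\floor{a(k+1-a)/2}\le J_k(n)$, or a degree bound giving $R_k(n)$.
\end{itemize}

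The main obstacle is this last structural count inside $W_{n,\floor{c/2},c}$. There one must carefully show that the $S$-vertices have at most $k+1-a$ neighbours within $S\cup T$ beyond a path, and handle the boundary where $a\notin I_k$. The connectivity reduction in the regime $J_k\le R_k$ is a secondary worry.
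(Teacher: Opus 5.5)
Your outline follows the paper's architecture closely: minimal counterexample, reduction to a $2$-connected non-Hamiltonian $G$, Lemma~\ref{lem:Ma-Ning-cycle-dichotomy} with Lemma~\ref{lem:exterior-edge-bound} and the arithmetic Lemmas~\ref{lem:cycle-arithmetic-consequences} and~\ref{lem:small-c-arithmetic}, then Theorem~\ref{thm:MN-exterior-stability}. Disposing of the exceptional families by deleting a vertex of degree at most $3$ is also fine. However, the step you flag as the main obstacle is a genuine gap, and your sketch for it does not work. Knowing $G\subseteq W_{n,q,c}$ with $q=\lfloor c/2\rfloor$ only tells you which edges $G$ \emph{may} have. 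The Hamilton paths in the clique $S\cup T$ of $W_{n,q,c}$, and the paths alternating between $U$ and $S$, need not exist in $G$, so you cannot apply the path-fan condition along them.

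The missing idea is to place the parts of $W$ on the longest cycle $C$ of $G$ itself.
\begin{itemize}
\item A vertex of $U$ lying on $C$ has its successor in $S$, so $c\le 2\lvert S\cap V(C)\rvert+\lvert T\cap V(C)\rvert$.
\item Since $c=2q+\tau-1$ with $\tau=\lvert T\rvert\in\{1,2\}$, this forces $S\subseteq V(C)$ and $\lvert T\setminus V(C)\rvert\le1$. Hence $G-C$ is edgeless.
\item Now $C-v$ is a genuine path of $G$, so Lemma~\ref{lem:cycle-degree} gives $2e(G[S])+e_G(S,V(C)\setminus S)\le q(k+1)$.
\item The edges leaving $C$ number at most $q(n-c)+\tau_C(\tau-\tau_C)$, where $\tau_C=\lvert T\cap V(C)\rvert$.
\item More than $(q-1)(n-c)$ edges leave $C$, so some outside vertex has $q$ neighbours on $C$, which gives $q\le k+1$.
\end{itemize}
Adding these up gives $e(G)\le q(n-q)+\lfloor q(k+1-q)/2\rfloor\le J_k(n)$. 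The case $c-q>k+1$ forces $\tau=2$ and $q=k+1$ and needs a short separate computation. The last inequality also needs $q\ge\lfloor(k+1)/2\rfloor+1$, which brings up the next point.

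Two further steps are asserted rather than proved. First, the bound $c\ge2\lfloor(k+1)/2\rfloor+2$, needed for Lemma~\ref{lem:small-c-arithmetic} and for $q\in I_k$, does not follow from a vertex of degree at least $k+2$: in $K_{2,m}$ such vertices coexist with circumference $4$. Your appeal to Theorem~\ref{thm:Dirac-cycle} is empty without a lower bound on $\delta(G)$. The paper gets $\delta(G)\ge\lfloor(k+1)/2\rfloor+1$ from the same vertex-deletion argument you use for the degree-$3$ vertex: $d_G(v)\ge F_k(n)+1-F_k(n-1)\ge\lfloor(k+1)/2\rfloor+1$. Run that once for every vertex; then Dirac gives the circumference bound, and $\delta(G)\ge4$ for $k\ge5$ excludes the exceptional families directly.

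Second, the triple $(k,n,c)=(3,8,6)$ is not settled by asserting that $G$ must contain a $5$-path-fan. The paper uses the following:
\begin{itemize}
\item Every estimate is tight there: $e(H)=11$ and exactly $6$ edges leave $C$.
\item $\delta(G)\ge3$ forces the two outside vertices to be nonadjacent, with equal neighbourhoods forming an independent triple of $C$; otherwise a longer cycle appears.
\item A count of chord incidences at that triple then gives the contradiction.
\end{itemize}
Your connectivity reduction in the regime $J_k(n)\le R_k(n)$ is only gestured at, but it does go through. There $F_k(m)=R_k(m)$ for all $k+2\le m<n$, and $n\le2k+2$ allows at most one block with $k+2$ or more vertices.
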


\begin{lemma}\label{lem:terminal}
Let $k\ge2$ and $n\ge \ceil{{(5k+1)}/{2}}$.
Then every $n$-vertex graph containing no $(k+2)$-path-fan has at most $(k+1)(n-k-1)$ edges.
\end{lemma}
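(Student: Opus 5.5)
We prove Lemma~\ref{lem:terminal} by induction on $n$, with the base case $n_0:=\ceil{(5k+1)/2}$ supplied by Lemma~\ref{lem:base} together with Lemma~\ref{lem:B-equals-f}. The latter gives $F_k(n_0)=(k+1)(n_0-k-1)$, so every $n_0$-vertex graph without a $(k+2)$-path-fan has at most $(k+1)(n_0-k-1)$ edges.

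For the inductive step, let $n>n_0$ and let $G$ be an $n$-vertex graph with no $(k+2)$-path-fan. We may assume $G$ has no isolated vertices, since otherwise we delete one and use the induction hypothesis directly.

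The key observation is that some vertex has degree at most $k+1$. Take a longest path $P$ in $G$ with endpoint $v$. By maximality of $P$, every neighbor of $v$ lies on $P$. Then $v$ together with the path $P-v$ forms an $\ell$-path-fan with $\ell=d_G(v)$. Since $G$ has no $(k+2)$-path-fan, $d_G(v)\le k+1$. (This is also the case if $P$ is a single vertex or $v$ has degree $1$.) Now $G-v$ has $n-1\ge n_0$ vertices and still contains no $(k+2)$-path-fan, so by induction
$$e(G)=e(G-v)+d_G(v)\le (k+1)(n-1-k-1)+(k+1)=(k+1)(n-k-1).$$

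Since the base case is delegated to Lemma~\ref{lem:base}, there is essentially no obstacle here. The only point needing care is the base case: we must check that $n_0=\ceil{(5k+1)/2}$ lies in the range of Lemma~\ref{lem:base}, which it does as the right endpoint, and that $F_k(n_0)$ equals $(k+1)(n_0-k-1)$ rather than $R_k(n_0)$. The latter holds by the third case of Lemma~\ref{lem:B-equals-f}, or alternatively by Lemma~\ref{lem:join-term-linear-range} together with $n_0\ge 2k+2$, which gives $J_k(n_0)\ge R_k(n_0)$. With this, the lower-bound constructions in Section~\ref{sec:constructions} combine with Lemmas~\ref{lem:base} and~\ref{lem:terminal} to yield Theorem~\ref{thm:main}, the case $k=1$ following from P\'osa's theorem.
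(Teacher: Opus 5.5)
Your proposal is correct and follows essentially the same route as the paper's proof. Both argue by induction on $n$, take the base case $n=\lceil(5k+1)/2\rceil$ from Lemmas~\ref{lem:base} and~\ref{lem:B-equals-f}, and in the inductive step delete an endpoint of a longest path, which has degree at most $k+1$ because otherwise it would be the center of a $(k+2)$-path-fan together with the rest of the path. Your preliminary reduction removing isolated vertices is harmless but unnecessary.
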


We first prove Lemma~\ref{lem:base}. To prove it, we prove two helpful lemmas, i.e., Lemmas \ref{lem:cycle-degree} and \ref{lem:minimal-counterexample-setup}.

\begin{lemma}\label{lem:cycle-degree}
Let $k\ge1$, let $G$ be a graph containing no $(k+2)$-path-fan, and let $C$ be any cycle of $G$. Then $d_{G[V(C)]}(v)\le k+1$ for each $v\in V(C)$.
\end{lemma}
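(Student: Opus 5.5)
The plan is to argue by contradiction and use Lemma~\ref{lem:path-fan-equivalence} directly, in its path-fan form. Suppose some $v\in V(C)$ satisfies $d_{G[V(C)]}(v)\ge k+2$. We will exhibit a path in $G-v$ that contains all of these neighbours. Then $v$ will be the center of a $(k+2)$-path-fan, which the hypothesis forbids.

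The path is simply the cycle with $v$ removed. Let $P:=C-v$. Deleting one vertex from a cycle leaves a path through all the remaining vertices, so $P$ is a path in $G-v$ with $V(P)=V(C)\setminus\{v\}$. Every neighbour of $v$ in $G[V(C)]$ lies in $V(C)\setminus\{v\}=V(P)$. Hence $\lvert N_G(v)\cap V(P)\rvert=d_{G[V(C)]}(v)\ge k+2$.

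By definition, $v$ together with $P$ is then a $(k+2)$-path-fan in $G$, which contradicts the hypothesis. Therefore $d_{G[V(C)]}(v)\le k+1$ for every $v\in V(C)$.

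There is no real obstacle. The one point to check is that the path-fan definition only asks $P$ to be a path in $G-x$; it does not require $P$ to be induced or its endpoints to be adjacent to the center. This lets us use $C-v$ directly, without passing through the chord formulation.
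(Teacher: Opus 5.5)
Your proof is correct and is essentially the paper's argument: if $d_{G[V(C)]}(v)\ge k+2$, then $v$ and the path $C-v$ in $G-v$ form a $(k+2)$-path-fan. Strictly speaking you use only the definition of a path-fan, not Lemma~\ref{lem:path-fan-equivalence}.
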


\begin{proof}
If $d_{G[V(C)]}(v)\ge k+2$ for some $v\in V(C)$, then $C-v$
is a path in $G-v$ containing at least $k+2$ neighbors of $v$.
Thus $v$ and $C-v$ form a $(k+2)$-path-fan, a contradiction.
\renewcommand{\qedsymbol}{$\blacksquare$}
\end{proof}

\begin{lemma}\label{lem:minimal-counterexample-setup}
Fix $k\ge2$.
Suppose that Lemma~\ref{lem:base} is false. Choose a counterexample $G$
with $n:=\lvert V(G)\rvert$ minimum and, subject to this, with $e(G)$ minimum.
Then the following hold:

\begin{itemize}[nosep]
    \item[\upshape(1)] $e(G) = F_k(n) + 1$;
    \item[\upshape(2)] $G$ is non-Hamiltonian;
    \item[\upshape(3)] $G$ is $2$-connected;
    \item[\upshape(4)] $\delta(G) \ge \lfloor {(k+1)}/{2} \rfloor + 1$;
    \item[\upshape(5)] 
    If $C$ is a longest cycle of $G$ with length $c$, then
    $2\lfloor {(k+1)}/{2} \rfloor + 2 \le c < n$.
\end{itemize}
\end{lemma}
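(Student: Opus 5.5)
Each of the five items follows from the minimality of $G$, combined with the merging inequalities of Lemma~\ref{lem:merge-ineq}, the path-fan characterization, and Dirac's theorem. We treat them in order.

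\textbf{(1).} If $e(G)\ge F_k(n)+2$, delete any edge. The resulting graph still has $n$ vertices and no $(k+2)$-path-fan, and it has more than $F_k(n)$ edges. This contradicts the minimality of $e(G)$. Hence $e(G)=F_k(n)+1$.

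\textbf{(2).} Suppose $G$ has a Hamilton cycle $C$. By Lemma~\ref{lem:cycle-degree}, every vertex satisfies $d_G(v)=d_{G[V(C)]}(v)\le k+1$. Therefore
\[
e(G)\le \left\lfloor \frac{(k+1)n}{2}\right\rfloor=R_k(n)\le F_k(n),
\]
a contradiction.

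\textbf{(3).} First, $G$ is connected. Otherwise write $G=G_1\cup G_2$ with $|G_i|=n_i$. Each $G_i$ has at most $F_k(n_i)$ edges: for $n_i\le k+1$ this is trivial, and for $n_i\ge k+2$ it follows by minimality of $n$. Then Lemma~\ref{lem:merge-ineq}(ii) gives $e(G)\le F_k(n)$.

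Second, $G$ has no cut vertex. If $v$ were one, split $G$ into two blocks-sides $G_1,G_2$ sharing $v$, with $n_1+n_2-1=n$. Lemma~\ref{lem:merge-ineq}(i) then gives $e(G)\le F_k(n)$.

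Both parts of Lemma~\ref{lem:merge-ineq} require the hypothesis $J_k(n)>R_k(n)$. When instead $F_k(n)=R_k(n)$, we argue directly. If $R_k(n)\ge J_k(n)$, then by Lemma~\ref{lem:linear-bound} we have $n\le 2k+2$. I would handle this subcase separately by bounding $F_k(n_1)+F_k(n_2)$ in terms of $R_k$, along the lines of Case~1 of the proof of Lemma~\ref{lem:merge-ineq}: use $\binom{x}{2}\le (k+1)(x-1)/2$ for $x\le k+1$, and $R_k(x)+R_k(y)\le R_k(x+y)$ together with a one-unit slack. I expect this case bookkeeping to be the main obstacle, in particular when $n$ is near $k+2$, where one side is a small clique.

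\textbf{(4).} Let $v$ have degree $d\le\lfloor (k+1)/2\rfloor$. Then
\[
e(G-v)\ge F_k(n)+1-d .
\]
I would show $F_k(n)-F_k(n-1)\ge \lfloor (k+1)/2\rfloor$, which contradicts either minimality of $n$ (when $n-1\ge k+2$) or the bound $\binom{n-1}{2}=\binom{k+1}{2}$ (when $n=k+2$). The increment bound holds as follows. $R_k$ increases by at least $\lfloor (k+1)/2\rfloor$ at each step, and $J_k$ increases by at least $a\ge\lfloor (k+1)/2\rfloor+1$, since the maximizing $a$ at $n-1$ remains admissible at $n$. In the case $n=k+2$, we have
\[
F_k(k+2)-\binom{k+1}{2}\ge R_k(k+2)-\binom{k+1}{2}\ge \left\lfloor\frac{k+1}{2}\right\rfloor .
\]

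\textbf{(5).} We have $c<n$ by (2). By (3), $G$ is $2$-connected and non-Hamiltonian, so Dirac's theorem (Theorem~\ref{thm:Dirac-cycle}) with (4) gives
\[
c\ge 2\delta(G)\ge 2\left\lfloor\frac{k+1}{2}\right\rfloor+2 .
\]
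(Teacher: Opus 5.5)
Your items (1), (2), (4), (5) and the case $J_k(n)>R_k(n)$ of (3) are correct and follow the paper. Your single increment bound $F_k(n)-F_k(n-1)\ge\lfloor (k+1)/2\rfloor$ in (4) is a tidy repackaging of the paper's case split.

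The gap is the case $J_k(n)\le R_k(n)$ of (3). You leave it open, and closing it needs an idea you never state. For a piece of order $n_i\ge k+2$, minimality of $n$ only gives $e(G_i)\le F_k(n_i)$. Nothing in your sketch rules out $F_k(n_i)=J_k(n_i)>R_k(n_i)$, and in that case no bound ``in terms of $R_k$'' is available. What you need is that the $R_k$-regime is closed downward: if $J_k(n)\le R_k(n)$, then $J_k(m)\le R_k(m)$ for every $k+2\le m<n$. This follows from the first assertion of Lemma~\ref{lem:linear-bound} (once $J_k$ exceeds $R_k$ it stays above). It is exactly what the paper records as Claim~\ref{claim:increment-properties}(ii). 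With it, the disconnected case is immediate: $e(G)\le\sum_i\lfloor (k+1)q_i/2\rfloor\le R_k(n)$, using $\binom{q}{2}\le\lfloor (k+1)q/2\rfloor$ for $q\le k+1$.

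For the cut vertex, superadditivity $R_k(x)+R_k(y)\le R_k(x+y)$ plus ``a one-unit slack'' cannot give the needed bound $R_k(x+y-1)$. The difference $R_k(x+y)-R_k(x+y-1)$ is at least $\lfloor (k+1)/2\rfloor$. When both sides have order at least $k+2$, $R_k(n_1)+R_k(n_2)$ can exceed $R_k(n_1+n_2-1)$ by about $(k+1)/2$. So two large sides must be excluded outright, and this is where $n\le 2k+2$ (Lemma~\ref{lem:linear-bound}(ii)) enters: two sides of order at least $k+2$ would force $n=n_1+n_2-1\ge 2k+3$.

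Hence at most one side $G_{i_0}$ has order at least $k+2$, and by downward closure $e(G_{i_0})\le R_k(n_{i_0})$. Every other side satisfies $\binom{n_i}{2}\le\lfloor (k+1)(n_i-1)/2\rfloor$. Summing, $e(G)\le\lfloor (k+1)n_{i_0}/2\rfloor+\sum_{i\ne i_0}\lfloor (k+1)(n_i-1)/2\rfloor\le R_k(n)$. If no side is large, $e(G)\le\lfloor (k+1)(n-1)/2\rfloor<R_k(n)$. Either way this contradicts $e(G)=R_k(n)+1$ and completes (3).
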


\begin{proof}
(1) Since deleting edges cannot create a $(k+2)$-path-fan, the choice of $G$
gives $e(G)=F_k(n)+1$.

(2) Since $F_k(n)\ge R_k(n)$, we have
$e(G)>(k+1)n/2$. Thus some vertex $x$ has degree at least $k+2$. If
$C$ is a Hamilton cycle, then $C-x$ and $x$ form a
$(k+2)$-path-fan. Hence $G$ is non-Hamiltonian.

For (3) and (4), we use the following two properties.

\begin{claim}\label{claim:increment-properties}
\begin{enumerate}[label=\textup{(\roman*)}, nosep]

\item For every integer $m\ge 1$ and every $a\in I_k$ such that $J_k(m)=a(m - a) + \left\lfloor \frac{a(k + 1 - a)}{2}\right\rfloor$,
 we have $J_k(m+1)\ge J_k(m)+a$.

\item If $F_k(n)=R_k(n)$, then $F_k(m)=R_k(m)$ for every $k+2\le m<n$.
\end{enumerate}
\end{claim}

\begin{proof}
(i) By the definition of $J_k(m+1)$,
$J_k(m+1)\ge a(m+1-a)+\left\lfloor\frac{a(k+1-a)}{2}\right\rfloor=J_k(m)+a$.

(ii) For $k+2\le m<n$,
let $a\in I_k$ such that $J_k(m)=a(m - a) + \left\lfloor \frac{a(k + 1 - a)}{2}\right\rfloor$.
By (i), we have $J_k(n)\ge a(n-a)+\lfloor a(k+1-a)/2\rfloor=J_k(m)+(n-m)a$. Since $F_k(n)=R_k(n)$, we have $J_k(n)\le R_k(n)$.
Since $a\ge\lfloor(k+1)/2\rfloor+1>(k+1)/2$, we have $(n-m)a+\lfloor (k+1)m/2\rfloor\ge\lfloor (k+1)n/2\rfloor=R_k(n)$.
Then
$J_k(m)-R_k(m)\le J_k(n)-(n-m)a-R_k(m)=J_k(n)-((n-m)a+R_k(m))\le J_k(n)-R_k(n)\le0$.
Hence $J_k(m)\le R_k(m)$, which implies $F_k(m)=R_k(m)$.
This proves (ii).
\end{proof}

(3) First, suppose that $G$ is disconnected, and let
$Q_1,\ldots,Q_t$ be its components, where $q_i:=\lvert V(Q_i)\rvert$. 
 If $q_i\le k+1$, then
$e(Q_i)\le\binom{q_i}{2}=F_k(q_i)$.
If $q_i\ge k+2$, then the minimality of $n$ gives
$e(Q_i)\le F_k(q_i)$.
Thus, $e(Q_i)\le F_k(q_i)$ for every $i$.
If $J_k(n)>R_k(n)$, repeated application of
Lemma~\ref{lem:merge-ineq}\textup{(ii)} gives
$e(G)\le\sum_i F_k(q_i)\le F_k(n)$, contradicting (1). If
$J_k(n)\le R_k(n)$, Claim~\ref{claim:increment-properties}\textup{(ii)} and
the definition of $F_k$ give
$e(G)\le\sum_i F_k(q_i)
 \le\sum_i\left\lfloor\frac{(k+1)q_i}{2}\right\rfloor
 \le R_k(n)=F_k(n)$,
a contradiction. Hence $G$ is connected.

Suppose that $z$ is a cut-vertex. Let $Q_1,\ldots,Q_t$ be the
components of $G-z$, and set
$G_i:=G[V(Q_i)\cup\{z\}]$ and $n_i:=\lvert V(G_i)\rvert$.
If $n_i\le k+1$, then
$e(G_i)\le \binom{n_i}{2}= F_k(n_i)$.
If $n_i\ge k+2$, then the minimality of $n$ gives
$e(G_i)\le F_k(n_i)$.
Thus, $e(G)\le\sum_ie(G_i)\le \sum_iF_k(n_i)$. 
We consider the cases $J_k(n)>R_k(n)$ and $J_k(n)\le R_k(n)$.
If $J_k(n)>R_k(n)$,
then by Lemma~\ref{lem:merge-ineq}\textup{(i)}, 
$e(G)\le \sum_i F_k(n_i)\le F_k\big(\sum_in_i-(t-1)\big)=F_k(n)=J_k(n)$,
contradicting $(1)$.
If $R_k(n)\ge J_k(n)$, 
by Lemma~\ref{lem:linear-bound}(ii), $n\le 2k+2$. 
By Claim~\ref{claim:increment-properties}(ii), if $n_i\ge k+2$,
$F_k(n_i)=R_k(n_i)\le \floor{{(k+1)n_i}/{2}}$.
Define $I=\{i:n_i\ge k+2\}$. If $\lvert I\rvert\ge 2$, then $n=1+\sum_i (n_i-1)\ge 1+2(k+1)=2k+3$, contradicting $n\le 2k+2$. Hence $\lvert I\rvert\le 1$.
If $I=\varnothing$, then $e(G)\le\sum_i\binom{n_i}{2}\le\sum_i\lfloor\frac{(k+1)(n_i-1)}2\rfloor\le\lfloor\frac{(k+1)(n-1)}2\rfloor<R_k(n)$.
If  $I=\{i_0\}$, then $e(G)\le R_k(n_{i_0})+\sum_{i\ne i_0}\binom{n_i}{2}\le\lfloor\frac{(k+1)n_{i_0}}2\rfloor+\sum_{i\ne i_0}\lfloor\frac{(k+1)(n_i-1)}2\rfloor\le R_k(n)$,
contradicting $(1)$.
Hence $G$ has no cut-vertex.
This proves (3).

(4) Let $v\in V(G)$. If
$n-1\le k+1$, then $e(G-v)\le F_k(n-1)\le R_k(n-1)$.
Hence
$d_G(v)\ge R_k(n)+1-R_k(n-1)\ge \lfloor(k+1)/2\rfloor+1$. We may therefore assume that
$n-1\ge k+2$. By the minimality of $n$,
$d_G(v)=e(G)-e(G-v)\ge F_k(n)+1-F_k(n-1)$.
If $J_k(n)\le R_k(n)$, then Claim
\ref{claim:increment-properties}\textup{(ii)} gives
$F_k(n-1)=R_k(n-1)$.
Thus, $d_G(v)\ge R_k(n)+1-R_k(n-1)\ge \lfloor(k+1)/2\rfloor+1$.
Suppose
 that $J_k(n)>R_k(n)$. If $F_k(n-1)=J_k(n-1)$, then Claim
\ref{claim:increment-properties}\textup{(i)} gives
$F_k(n)-F_k(n-1)\ge \lfloor(k+1)/2\rfloor+1$. If $F_k(n-1)=R_k(n-1)$, then
$F_k(n)\ge R_k(n)+1$.
Therefore $d_G(v)\ge \lfloor(k+1)/2\rfloor+1$. 
This proves (4).

(5) Let $C$ be a longest cycle of $G$ and let $c=\lvert E(C)\rvert$. By
\textup{(2)} and \textup{(3)}, the graph $G$ is non-Hamiltonian and
$2$-connected. Hence Theorem~\ref{thm:Dirac-cycle}  gives
$n>c\ge 2\delta(G)
\ge2\floor{{(k+1)}/{2}}+2$.

This completes the proof.
\renewcommand{\qedsymbol}{$\blacksquare$}
\end{proof}

Now we are in a position to prove Lemma~\ref{lem:base}. 

\begin{proof}[\bf Proof of Lemma~\ref{lem:base}]
Suppose that Lemma~\ref{lem:base} is false, and let $G$ be the minimal counterexample given in Lemma~\ref{lem:minimal-counterexample-setup}. Let $C$ be a longest cycle of $G$. Set
$H=G[V(C)]$ and
$c=\lvert E(C)\rvert$. 
Let
$\widehat G=\cl_C(G)$, 
$\widehat H=\widehat G[V(C)]$.
Since $G$ is $2$-connected and non-Hamiltonian,
Lemma~\ref{lem:Ma-Ning-cycle-dichotomy} applies.
If Lemma~\ref{lem:Ma-Ning-cycle-dichotomy} (i) holds,
there exist an integer $s$ and a set $S\subseteq V(C)$ such that
$2\le s\le \bigl\lfloor\frac{c}{2}\bigr\rfloor-1,\lvert S\rvert=s-1$,
$d_{\widehat H}(u)\le s$ for every $u\in S$, and $\widehat H-S$ is a clique.
By Lemma~\ref{lem:exterior-edge-bound}(i),
$e(G)\le e(H)+s(n-c)$.
Since $H\subseteq \widehat H$ and $d_H(v)\le k+1$ for every $v\in V(C)$,
$2e(H)\le (s-1)s+(c-s+1)(k+1)$.
Therefore
$e(G)\le
s(n-c)+
\Bigl\lfloor\frac{(s-1)s+(c-s+1)(k+1)}{2}\Bigr\rfloor
\le F_k(n)$
by Lemma~\ref{lem:cycle-arithmetic-consequences}\textup{(i)}, contradicting
Lemma~\ref{lem:minimal-counterexample-setup}\textup{(1)}.
We may therefore assume that Lemma~\ref{lem:Ma-Ning-cycle-dichotomy} (ii) holds.
Then there is a set $R\subseteq V(C)$ such that $\lvert R\rvert=\bigl\lfloor\frac{c}{2}\bigr\rfloor-1$ and
$d_{\widehat H}(u)\le \bigl\lfloor\frac{c}{2}\bigr\rfloor$ for every $u\in R$.
Since Theorem~\ref{thm:MN-exterior-stability} applies when $c\ge10$, we first distinguish the cases $c\le9$ and $c\ge10$.
For convenience, set $q=\floor{c/2}$.

\medskip
\noindent\textbf{Case 1.}
$c\le 9$.

Since $c\ge 2\lfloor(k+1)/2\rfloor+2$
by Lemma~\ref{lem:minimal-counterexample-setup}\textup{(5)},
we have
$2\le k\le 6$.
Since $q\le k+1$, we have
$2e(H)\le(q-1)q+(c-q+1)(k+1)$.
By Lemma~\ref{lem:exterior-edge-bound}(ii),
$e(G-C)+e_G(G-C,C)
 \le q(n-c)$.
It follows that
$$
e(G)\le q(n-c)+
\left\lfloor\frac{(q-1)q+(c-q+1)(k+1)}2\right\rfloor.
$$
If $(k,n,c)\neq(3,8,6)$, then
Lemma~\ref{lem:small-c-arithmetic} gives $e(G)\le F_k(n)$, a
contradiction. 
It remains to consider $(k,n,c)=(3,8,6)$. Then
$e(G)=F_3(8)+1=17$. Moreover, $\lvert R\rvert=2$,
$d_{\widehat H}(u)\le3$ for $u\in R$, and Lemma~\ref{lem:cycle-degree} gives
$d_H(x)\le4$ for $x\in V(C)$. Hence
$2e(H)\le2\cdot3+4\cdot4=22$ and $e(H)\le11$.
By Lemma~\ref{lem:exterior-edge-bound}(ii),
$e(G-C)+e_G(G-C,C)\le3(8-6)=6$.
Consequently,
\begin{equation}\label{eq:k3-equalities}
e(G)=17,
\qquad
e(H)=11,
\qquad
e(G-C)+e_G(G-C,C)=6.
\end{equation}

By Lemma~\ref{lem:minimal-counterexample-setup}\textup{(4)},
$\delta(G)\ge3$.
Let $V(G)\setminus V(C)=\{u,v\}$.
We claim that $uv\notin E(G)$. Otherwise,
 we have
$\lvert N_C(u)\rvert,\lvert N_C(v)\rvert\ge2$ and \eqref{eq:k3-equalities} gives
$\lvert N_C(u)\rvert+\lvert N_C(v)\rvert=5$. 
We may assume that $\lvert N_C(u)\rvert=3$.
For each $y\in N_C(v)$, there is an $x\in N_C(u)\setminus\{y\}$ at
distance at most two from $y$ on $C$.
Since $xuvy$ is a path of length 3,
replacing the shorter segment $xCy$ by $xuvy$ gives a cycle of length at
least $7$, a contradiction.
Thus, $uv\notin E(G)$ and 
$\lvert N_C(u)\rvert+\lvert N_C(v)\rvert=6$. Since $\delta(G)\ge3$, it follows that
$\lvert N_C(u)\rvert=\lvert N_C(v)\rvert=3$.
Let $C=a_1b_1a_2b_2a_3b_3a_1$.
Since $C$ is a longest cycle of $G$,
neither $N_C(u)$ nor $N_C(v)$ contains two consecutive vertices of $C$.
Hence $N_C(u)$ and
$N_C(v)$ are the two independent sets of order $3$ in $C$.
If
$N_C(u)=\{a_1,a_2,a_3\}$
and 
$N_C(v)=\{b_1,b_2,b_3\}$,
then
$u a_1 b_1 a_2 b_2 v b_3 a_3 u$
is an $8$-cycle, a contradiction. Thus $N_C(u)=N_C(v)$. Without loss of
generality,
assume that $N_C(u)=N_C(v)=\{a_1,a_2,a_3\}$.
If, say, $b_1b_2\in E(G)$, then
$ua_1b_3 a_3 b_2 b_1 a_2 u$
is a $7$-cycle, a contradiction.

By \eqref{eq:k3-equalities}, the cycle $C$ has five chords. 
Since no chord
joins two vertices of $\{b_1,b_2,b_3\}$, every chord has one of the
following types:
\begin{itemize}[nosep]
  \item Type I: an edge inside $\{a_1,a_2,a_3\}$ (at most $3$);
  \item Type II:  $a_1b_2$, $a_2b_3$ and $a_3b_1$.
\end{itemize}
Let $m$ be the number of type-I chords. Since there are at most three
type-II chords, $m\ge2$.

In $H$, every $a_i$ has degree at most $4$ and is already incident with
two edges of the cycle $C$.
Hence, each $a_i$ is incident with at most two chords.
Therefore, there are at
most six chord incidences at $a_1,a_2,a_3$. On the other hand, each type-I chord
contributes two such incidences, while each type-II chord contributes
one. Since there are $m$ type-I chords and $5-m$ type-II chords, the
total number of chord incidences is
$2m+(5-m)=m+5\ge 7$, a contradiction.

\medskip
\noindent\textbf{Case 2.}
$c\ge 10$.

We distinguish according to whether
$e(G)-e(H)\le (q-1)(n-c)$. In the second
subcase, we use Theorem~\ref{thm:MN-exterior-stability}.

\medskip
\noindent\textbf{Subcase 1.}
$e(G)-e(H)\le (q-1)(n-c)$.

By Lemma~\ref{lem:cycle-degree},
$d_H(v)\le k+1$ for every  $v\in V(C)$.
Hence
$$\begin{aligned}
    2e(H)&=\sum_{v\in V(C)}d_H(v)
    =\sum_{u\in R}d_H(u)+\sum_{v\in V(C)\setminus R}d_H(v)\\
    &\le
(q-1)\min\{q,k+1\}+(c-q+1)(k+1).
\end{aligned}
$$
By Lemma~\ref{lem:cycle-arithmetic-consequences}\textup{(ii)}, we have
$$
\begin{aligned}
e(G)&\le(q-1)(n-c)+
\Bigl\lfloor
\frac{
(q-1)\min\{q,k+1\}+(c-q+1)(k+1)
}{2}
\Bigr\rfloor\le F_k(n),
\end{aligned}
$$
 contradicting
Lemma~\ref{lem:minimal-counterexample-setup}\textup{(1)}.

\medskip
\noindent\textbf{Subcase 2.}
$e(G)-e(H)>(q-1)(n-c)$.

We begin with analyzing the structure of $G$.

\begin{claim}\label{claim:ext-rich-host1}
$G\subseteq W_{n,q,c}$.
\end{claim}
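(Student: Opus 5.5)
We are in Subcase 2 with $c\ge 10$, $c<n$, $G$ $2$-connected and $C$ a longest cycle, and $e(G)-e(H)=e(G-C)+e_G(G-C,C)>(q-1)(n-c)$. These are exactly the hypotheses of Theorem~\ref{thm:MN-exterior-stability}. Applying it, either $G\subseteq W_{n,q,c}$, in which case we are done, or $c$ is odd and $G$ is a subgraph of some member $G^*\in\mathcal X_{n,c}\cup\mathcal Y_{n,c}$. So the work is to rule out the second alternative, or to show that it also forces $G\subseteq W_{n,q,c}$.

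Assume $c$ is odd and $G\subseteq G^*\in\mathcal X_{n,c}\cup\mathcal Y_{n,c}$. Apply Lemma~\ref{lem:exceptional-low-degree-or-W} to $G^*$. If $G^*\subseteq W_{n,q,c}$, then $G\subseteq W_{n,q,c}$ and we are done. Otherwise we must exclude the situation where $G^*$ has low-degree vertices, and the argument here has to use $G$, not just $G^*$. The proof of Lemma~\ref{lem:exceptional-low-degree-or-W} shows that the vertices of degree at most $3$ in $G^*$ are all outside $A\cup B$:
\begin{itemize}[nosep]
\item in the $\mathcal X$ case, they are the vertices of $X\neq\emptyset$, each of degree $2$;
\item in the $\mathcal Y$ case, they are vertices of $Y$: leaves of degree $2$, or vertices of $K_2$-components of $G[Y]$, of degree at most $3$.
\end{itemize}
Degrees in $G$ are at most those in $G^*$. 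By Lemma~\ref{lem:minimal-counterexample-setup}(4), $\delta(G)\ge\lfloor(k+1)/2\rfloor+1$. This already rules out degree-$2$ vertices, and degree-$3$ vertices when $k\ge 4$. Since $c\ge 10$ and $c\ge 2\lfloor (k+1)/2\rfloor+2$, small $k$ needs separate care.

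The cleaner route I would try first is a direct structural bound instead of degrees. If $X\ne\emptyset$, or if $Y$ is nonempty in the $\mathcal Y$ case, then every vertex of $A$ other than $a,b$ has neighbors only in $A\cup B$. Moreover, $G$ has longest cycle length $c=2q+1$. Counting the edges of $G^*$ gives
\[
e(G)\le \binom{q}{2}+q\lvert B\rvert+(\text{edges at }X\text{ or }Y),
\]
where the last term is at most $2\lvert X\rvert$, or roughly $\tfrac{3}{2}\lvert Y\rvert$ in the $\mathcal Y$ case. Each vertex outside $A\cup B$ contributes at most $2$ edges, and $2<\lfloor(k+1)/2\rfloor+1$ for $k\ge 3$. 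Deleting one such vertex $w$ therefore gives an $(n-1)$-vertex graph with more than $F_k(n)+1-d_G(w)\ge F_k(n-1)+1$ edges. This uses $F_k(n)-F_k(n-1)\ge\lfloor (k+1)/2\rfloor+1$, established in the proof of Lemma~\ref{lem:minimal-counterexample-setup}(4), and it contradicts the minimality of $n$. The same degree argument works for $K_2$-components of $Y$ when $\lfloor(k+1)/2\rfloor+1\ge 4$, i.e.\ $k\ge 6$.

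The main obstacle is the remaining case $k\le 5$ with a $K_2$-component $uv$ of $G[Y]$. Here $d_G(u),d_G(v)\le 3$ and $c\ge 10$, but then $n\le\lceil (5k+1)/2\rceil\le 13$. For these I would remove both $u$ and $v$ together. They carry at most $5$ edges, and $F_k(n)-F_k(n-2)\ge 2(\lfloor (k+1)/2\rfloor+1)\ge 6$ for $k\ge 4$, again contradicting minimality. The cases $k\le 3$ are impossible, since $n>c\ge 10$ would exceed $\lceil (5k+1)/2\rceil\le 8$. This leaves only the conclusion $G\subseteq W_{n,q,c}$.
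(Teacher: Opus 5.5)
Your skeleton is the paper's: apply Theorem~\ref{thm:MN-exterior-stability}, pass to $G^*\in\mathcal X_{n,c}\cup\mathcal Y_{n,c}$, invoke Lemma~\ref{lem:exceptional-low-degree-or-W}, and contradict Lemma~\ref{lem:minimal-counterexample-setup}(4) using $d_G\le d_{G^*}$. This last step is valid because $G$ is a spanning subgraph of $G^*$. The gap is in your treatment of small $k$, which rests on miscounted thresholds and a false inequality.

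First, $\lfloor (k+1)/2\rfloor+1\ge 4$ holds exactly when $k\ge 5$. So the degree bound excludes vertices of degree $3$ for $k\ge 5$, not for $k\ge 4$ as you first claim, and not only for $k\ge 6$ as you later claim.

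More seriously, the increment you attribute to the proof of (4) is not what that proof gives. It gives $F_k(n)+1-F_k(n-1)\ge\lfloor (k+1)/2\rfloor+1$, i.e.\ only $F_k(n)-F_k(n-1)\ge\lfloor (k+1)/2\rfloor$ in general. Your stronger version fails, e.g.\ $F_4(7)-F_4(6)=17-15=2$. Consequently your bound $F_k(n)-F_k(n-2)\ge 2(\lfloor (k+1)/2\rfloor+1)$ is false, even in a case you actually invoke. For $k=5$, $n=12$, $c=11$ it would require $F_5(12)-F_5(10)\ge 8$, but $F_5(12)-F_5(10)=37-30=7$.

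So your two-vertex deletion step is unsupported as written. It does survive by direct computation: a $K_2$-component carries at most $5$ edges, and $F_4(11)-F_4(9)=8$, $F_5(13)-F_5(11)=9$, $F_5(12)-F_5(10)=7$ are all at least $5$. But that has to be checked case by case, not deduced from the general inequality.

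In fact the ``main obstacle'' does not exist, and the paper disposes of it in a few lines.
\begin{itemize}[nosep]
\item For $k\le 3$, one has $n\le 8<c$, which is impossible, as you note.
\item For $k=4$, one has $n\le 11$, and $10\le c<n$ forces $c=10$, which is even. The exceptional alternative in Theorem~\ref{thm:MN-exterior-stability} requires $c$ odd, so it never occurs. This parity observation is what you missed.
\item For $k\ge 5$, one has $\delta(G)\ge 4$. So every vertex of degree at most $3$ in $G^*$ supplied by Lemma~\ref{lem:exceptional-low-degree-or-W} is an immediate contradiction, including the vertices of $K_2$-components of $G[Y]$.
\end{itemize}

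Your middle paragraph, which counts edges of $G^*$, is not used and should be dropped. Its claim that ``each vertex outside $A\cup B$ contributes at most $2$ edges'' is false in $\mathcal Y_{n,c}$, for star centers and for $K_2$-vertices. Your single-vertex deletion argument merely re-derives Lemma~\ref{lem:minimal-counterexample-setup}(4).
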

\begin{proof}
Suppose not.
By Lemma~\ref{lem:minimal-counterexample-setup}, $G$ is $2$-connected and
$c<n$. Since $c\ge10$, Theorem~\ref{thm:MN-exterior-stability} applies.
Then $c$ is odd and $G\subseteq F$ for some
$F\in\mathcal X_{n,c}\cup\mathcal Y_{n,c}$.
If $k\le3$, then $n\le8$, contradicting $c\ge10$. If
$k=4$, then $n\le11$ and $10\le c<n$. 
Then $c=10$, contradicting the
fact that $c$ is odd. Thus $k\ge5$. 
Then $\delta(G)\ge\lfloor{(k+1)}/{2}\rfloor+1\ge4$.
By
Lemma~\ref{lem:exceptional-low-degree-or-W}, either
$F\subseteq W_{n,q,c}$ or $F$
has a vertex of degree at most $3$. 
If $F\subseteq W_{n,q,c}$, then $G\subseteq W_{n,q,c}$.
If $\delta(F)\le 3$, then $\delta (G)\le 3$,
 a contradiction.
Hence $G\subseteq W_{n,q,c}$.
\end{proof}

Since $G\subseteq W_{n,q,c}$,
$V(G)$ can be partitioned into three disjoint vertex sets $S,T,U$ with
$\lvert S\rvert=q$ and $\lvert T\rvert=\tau:=c-2q+1\in\{1,2\}$.
\begin{claim}\label{claim:ext-rich-host2}
$S\subseteq V(C)$ and $\lvert T\setminus V(C)\rvert\le1$.
\end{claim}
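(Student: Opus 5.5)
The plan is a counting argument on how the vertices of $C$ are distributed among the three parts $S$, $T$, $U$ of the copy of $W_{n,q,c}$ containing $G$ (Claim~\ref{claim:ext-rich-host1}). The only structural facts needed are that $U$ is independent in $W_{n,q,c}$ and that every vertex of $U$ has all its neighbours in $S$. Since $G\subseteq W_{n,q,c}$, both facts hold in $G$. Write
$$s':=\lvert S\cap V(C)\rvert,\qquad t':=\lvert T\cap V(C)\rvert,\qquad u':=\lvert U\cap V(C)\rvert,$$
so that $s'+t'+u'=c=2q+\tau-1$, with $s'\le q$ and $t'\le\tau$.

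First I would bound $u'$ in terms of $s'$. Each vertex of $U\cap V(C)$ has both of its neighbours on $C$ in $S$. Because $U$ is independent, the $2u'$ cycle edges incident with $U\cap V(C)$ are pairwise distinct. Each of them ends in $S\cap V(C)$, and every vertex of $S\cap V(C)$ lies on only two edges of $C$. Hence $2u'\le 2s'$, that is, $u'\le s'$.

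The case $u'=s'$ is special. Here every cycle edge at an $S$-vertex goes to $U$, so $C$ alternates between $S$ and $U$ and contains no vertex of $T$. Thus $t'=0$ and $c=2s'\le 2q$. Comparing with $c=2q+\tau-1$ forces $\tau=1$ and $s'=q$. Then $S\subseteq V(C)$, and $\lvert T\setminus V(C)\rvert=\lvert T\rvert=1$, as required.

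Otherwise $u'\le s'-1$, and then
$$c=s'+t'+u'\le 2s'-1+t'\le 2q-1+\tau=c.$$
Equality must hold throughout, so $s'=q$ and $t'=\tau$. Hence $S\subseteq V(C)$ and even $T\subseteq V(C)$. The degenerate subcase $s'=u'=0$ is already contained in the case $u'=s'$: it would give $c=t'\le 2$, contradicting $c\ge10$.

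I do not expect a real obstacle. The one point needing care is the edge-counting step, namely checking that the $2u'$ cycle edges at $U$-vertices are genuinely distinct (this is where independence of $U$ is used). The alternating case $u'=s'$ must be treated separately, since that is exactly the case in which a vertex of $T$ can lie off $C$.
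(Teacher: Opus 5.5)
Your proof is correct and is essentially the paper's argument. The paper gets $\lvert U\cap V(C)\rvert\le\lvert S\cap V(C)\rvert$ from the injective successor map ($u\in U\cap V(C)$ forces $u^+\in S$) rather than from your edge count. It then uses $c\le 2\lvert S\cap V(C)\rvert+\lvert T\cap V(C)\rvert\le 2\lvert S\cap V(C)\rvert+\tau$ directly, so it needs no separate alternating case; your case split only adds that a vertex of $T$ can lie off $C$ only when $\tau=1$ and $C$ alternates between $S$ and $U$.
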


\begin{proof}
Orient $C$ and let $u^+$ be the successor of $u$ on $C$. If
$u\in U\cap V(C)$, then $u^+\in S\cap V(C)$ since the vertices in $U$ are
adjacent only to the vertices in $S$. Since the successor map is injective,
$\lvert U\cap V(C)\rvert\le \lvert S\cap V(C)\rvert$. Hence, we have
$ c\le 2\lvert S\cap V(C)\rvert+\lvert T\cap V(C)\rvert\le 2\lvert S\cap V(C)\rvert+\tau $.
If $S\not\subseteq V(C)$, then $\lvert S\cap V(C)\rvert\le q-1$.
Therefore
$ c\le 2(q-1)+\tau=c-1 $, a contradiction. Thus $S\subseteq V(C)$.
Now
$ c\le 2q+\lvert T\cap V(C)\rvert $,
while $c=2q+\tau-1$. It follows that
$\lvert T\cap V(C)\rvert\ge\tau-1$. Hence $\lvert T\setminus V(C)\rvert\le1$.
\end{proof}

By Claim~\ref{claim:ext-rich-host2}, we have $S\subseteq V(C)$ and
$\lvert T\setminus V(C)\rvert\le 1$. Since $U$ is independent, $E_G(U,T)=\emptyset$ and $\lvert T\setminus V(C)\rvert \le 1$, the graph $G-C$ has no edges.
Thus $e(G)-e(H)=e_G(G-C,C)$.

We first prove that
$q\in I_k$.
Since $e(G)-e(H)\ge (q-1)(n-c)+1$, 
there exists a vertex $z\in
V(G)\setminus V(C)$ with at least $q$ neighbors on $C$.
If $q\ge k+2$, 
then there exists a subpath $P$ on $C$ that contains at least $k+2$ neighbors of $z$.
Thus, $z$ and $P$ form a $(k+2)$-path-fan,
a contradiction. Therefore
$q\le k+1$.
On the other hand, Lemma~\ref{lem:minimal-counterexample-setup} gives
$c\ge 2\bigl\lfloor(k+1)/2\bigr\rfloor+2$.
Then,
$q\ge\bigl\lfloor(k+1)/2\bigr\rfloor+1$. Hence
$q\in I_k$.

Now
$$
e(G)-e(H)=
\sum_{v\in U\setminus V(C)}\lvert N_G(v)\cap V(C)\rvert
+\sum_{v\in T\setminus V(C)}\lvert N_G(v)\cap V(C)\rvert.
$$
Set $\tau_C=\lvert T\cap V(C)\rvert$. Then
$\lvert T\setminus V(C)\rvert=\tau-\tau_C$ and $\tau_C\in\{\tau-1,\tau\}$.
If $v\in U\setminus V(C)$, then all its neighbors are in $S$.
Thus, $\lvert N_G(v)\cap V(C)\rvert\le q$. If $v\in T\setminus V(C)$, then its neighbors on $C$ are
in $S\cup (T\cap V(C))$.
Hence $\lvert N_G(v)\cap V(C)\rvert\le q+\tau_C$. Since
$\lvert U\setminus V(C)\rvert=(n-c)-(\tau-\tau_C)$,
we obtain
\begin{equation}\label{eq:outside-edge-bound}
\begin{aligned}
e(G)-e(H)
&\le q\bigl(n-c-(\tau-\tau_C)\bigr) + (q+\tau_C)(\tau-\tau_C) = q(n-c) + \tau_C(\tau-\tau_C).
\end{aligned}
\end{equation}
Let $X=V(C)\setminus S=(T\cup U)\cap V(C)$.
Then $\lvert X\rvert=c-q$.
Since $U$ is independent and has no neighbors in $T$, we have
\begin{equation}\label{eq:cycle-edge-bound}
e(H)\le e(G[S])+e_G\bigl(S,X\bigr)+\binom{\tau_C}{2}.
\end{equation}
Since $\tau\in\{1,2\}$ and $\tau_C\in\{\tau-1,\tau\}$, we have
$\binom{\tau_C}{2}+\tau_C(\tau-\tau_C)=\tau-1\in\{0,1\}$.
By \eqref{eq:outside-edge-bound} and \eqref{eq:cycle-edge-bound},
\begin{equation}\label{eq:e(G)}
\begin{aligned}
e(G)
&\le e_G(S,X)+e(G[S])+\binom{\tau_C}{2}+q(n-c)+ \tau_C(\tau-\tau_C)\\
&\le e_G(S,X)+e(G[S])+q(n-c)+ \tau-1.
\end{aligned}
\end{equation}
Applying Lemma~\ref{lem:cycle-degree} to the vertices of $S\subseteq
V(C)$ gives
\begin{equation}\label{eq:outside-edge-bound3}
2 e(G[S])+e_G\bigl(S,X\bigr)=
\sum_{s\in S} d_H(s)\le q(k+1).
\end{equation}

It remains to prove that $e(G)\le J_k(n)$.
First, suppose that $\lvert X\rvert=c-q\le k+1$.
Thus, by \eqref{eq:outside-edge-bound3},
$e(G[S])\le \bigl\lfloor \frac{q(k+1)-e_G(S,X)}{2}\bigr\rfloor$.
Therefore,
$$
e(G)
\le e_G(S,X)+\Bigl\lfloor \frac{q(k+1)-e_G(S,X)}{2}\Bigr\rfloor
   +q(n-c)+ \tau-1.
$$
Since $e_G(S,X)\le q(c-q)$,
$$
\begin{aligned}
e(G) &\le q(c-q)+\left\lfloor \frac{q(k+1)-q(c-q)}{2} \right\rfloor + q(n-c)+\tau-1\\
&= q(n-q)+\left\lfloor \frac{q(k+1-c+q)}{2} \right\rfloor+\tau-1.
\end{aligned}
$$
Since $c=2q+\tau-1$, $k+1-c+q = k+2-q-\tau$. Hence
$$
e(G)\le q(n-q)+\left\lfloor \frac{q(k+2-q-\tau)}{2} \right\rfloor+\tau-1.
$$
If $\tau=1$, then $e(G)\le q(n-q)+\left\lfloor \frac{q(k+1-q)}{2}\right\rfloor \le J_k(n)$.
If $\tau=2$, then $c=2q+1$. 
Since $c-q\le k+1$, $q\le k$. 
Then,
$$
\begin{aligned}
e(G) &\le q(n-q)+\left\lfloor \frac{q(k-q)}{2}\right\rfloor +1= q(n-q)+\left\lfloor \frac{q(k-q)}{2}+1\right\rfloor\\
&\le q(n-q)+\left\lfloor \frac{q(k+1-q)}{2}\right\rfloor \le J_k(n).
\end{aligned}
$$

Next, we assume that $c-q>k+1$. Since
$c=2q+\tau-1$ and $\tau\in\{1,2\}$, if $\tau=1$, then
$c-q=q\le k+1$, a contradiction. Hence $\tau=2$. Then,
$c-q=q+1$ and $q+1>k+1$, that is, $q>k$. Since
$q\le k+1$, $q=k+1$. From \eqref{eq:outside-edge-bound3},
$2 e(G[S])+e_G\bigl(S,X\bigr)=
\sum_{s\in S} d_H(s)\le q(k+1)=q^2$.
Hence
\begin{align*}
e(G)
&\le {q}^2+q(n-c)+1= {q}^2+q(n-2q-1)+1= q(n-q)-q+1\le q(n-q).
\end{align*}
Thus $e(G)\le J_k(n)$.

Therefore,
$e(G)\le J_k(n)\le F_k(n)$, contradicting
$e(G)=F_k(n)+1$.
\renewcommand{\qedsymbol}{$\blacksquare$}
\end{proof}

The proof of Lemma~\ref{lem:terminal} is very short. 

\begin{proof}[\bf Proof of Lemma~\ref{lem:terminal}]
We proceed by induction on $n$.
The base case  $n=\lceil(5k+1)/2\rceil$ follows from  Lemmas~\ref{lem:base} and~\ref{lem:B-equals-f}.

Let $n>\lceil(5k+1)/2\rceil$, and let $G$ be an $n$-vertex graph containing no $(k+2)$-path-fan. Let $P=v_0v_1\dots v_t$ be
a longest path. Then every neighbor of $v_0$ lies on
$P$. If $d_G(v_0)\ge k+2$, then $v_0$ and the path
$v_1v_2\dots v_t$ form a $(k+2)$-path-fan, a contradiction. Hence
$d_G(v_0)\le k+1$. By induction and Lemma~\ref{lem:B-equals-f},
$e(G)=e(G-v_0)+d_G(v_0)\le (k+1)(n-k-2)+(k+1)=(k+1)(n-k-1).
$
This completes the proof.
\renewcommand{\qedsymbol}{$\blacksquare$}
\end{proof}

Now, we can finish the proof of Theorem~\ref{thm:main}. 

\begin{proof}[{\bf Proof of Theorem~\ref{thm:main}}]
If $k=1$, then $g_1(3)=3$.
P\'osa's theorem and the
construction $K_{2,n-2}$ give $g_1(n)=2n-4$ for $n\ge4$.  We may therefore assume that $k\ge2$.
If $k+2\le n<\lceil(5k+1)/2\rceil$, then
Lemma~\ref{lem:base} gives $g_k(n)\le F_k(n)$. The two constructions in
Section~\ref{sec:constructions} give 
$g_k(n)\ge R_k(n)$ and $g_k(n)\ge J_k(n)$.
Thus, $g_k(n)=F_k(n)$.

Now let $n\ge \lceil(5k+1)/2\rceil$. Lemma~\ref{lem:terminal} gives
$g_k(n)\le (k+1)(n-k-1)=J_k(n)$.
Construction~\ref{cons:split-construction} with $a=k+1$ gives a graph containing no $(k+2)$-path-fan and having exactly
$(k+1)(n-k-1)$ edges.
Thus,
$g_k(n)=J_{k}(n)=(k+1)(n-k-1)$.
By Lemma~\ref{lem:B-equals-f}, $F_k(n)=J_k(n)$.
This completes the proof.
\renewcommand{\qedsymbol}{$\blacksquare$}
\end{proof}

\section*{Acknowledgements}
The authors thank Thomas Bloom for founding and maintaining the Erd\H{o}s Problems website, which has been a valuable resource and inspiration for this work.
This work was presented at the **2nd Meili Combinatorics Conference** held at Xinjiang Normal University. The second author thanks the conference organizers for the welcoming atmosphere.

\section*{Declaration of AI usage}
This work began while the authors were using GPT-5.5 Pro. Early on, the AI system could not provide useful mathematical assistance. The first author proved several small-$k$ ($k\le 7$), and the second author proved a stability version of Jiang’s theorem, relying heavily on techniques from his earlier joint paper with Ma \cite{MaNing2020}.

The second author then combined the two manuscripts and provided them to the AI, asking it to propose a general conjecture. Its first proposed conjecture was incorrect; after further exchanges, its second conjecture was given and became the main theorem of this paper. Thus, the AI contributed to the formulation of the general conjecture, based on mathematical results and proofs already obtained by the authors. The authors also used AI for some calculations in auxiliary lemmas and for language editing and proofreading.

The current version was completely written and checked by the authors, who take full responsibility for the mathematical content and the final text. All original mathematical results proved in this paper, including all lemmas and claims proved in Sections~\ref{sec:constructions}--\ref{sec:proof} as well as the main theorem, Theorem~\ref{thm:main}, have been formalized and machine-checked in Lean~4.
The complete Lean source code, the paper-to-Lean
correspondence table, the pinned Lean and Mathlib versions, and the
automated verification workflow are available at 
\url{https://github.com/xzchen-math/Erdos767lean}.

\end{document}